\newtheorem{theorem}{Theorem}[section]
\newtheorem{lemma}[theorem]{Lemma}
\newtheorem{corollary}[theorem]{Corollary}
\newtheorem{proposition}[theorem]{Proposition}
\theoremstyle{definition}
\theoremstyle{remark}
\newtheorem{remark}[theorem]{Remark}
\numberwithin{equation}{section}
\newcommand{\bb}{{\boldsymbol{b}}}
\def\wt{\widetilde}
\def\bb{\mathbb}
\begin{document}

\title{The Khovanov width of twisted links and closed $3$-braids}

\author[A. Lowrance]{Adam Lowrance}

\address{Department of Mathematics, Louisiana State University \newline
\hspace*{.375in} Baton Rouge, LA 70817, USA} \email{\rm{lowrance@math.lsu.edu}}

\subjclass{}
\date{January 15, 2009}

\begin{abstract}
Khovanov homology is a bigraded $\bb{Z}$-module that categorifies the Jones polynomial. The support of Khovanov homology lies on a finite number of slope two lines with respect to the bigrading. The Khovanov width is essentially the largest horizontal distance between two such lines. We show that it is possible to generate infinite families of links with the same Khovanov width from link diagrams satisfying certain conditions. Consequently, we compute the Khovanov width for all closed 3-braids.
\end{abstract}

\maketitle

%----------------------------------------------------------------------------------------------%
%----------------------------------------------------------------------------------------------%
\section{Introduction}

Let $L\subset S^3$ be an oriented link. The Khovanov homology of $L$, denoted $Kh(L)$, was introduced by Mikhail Khovanov in \cite{kho}, and is a bigraded $\bb{Z}$-module with homological grading $i$ and polynomial (or Jones) grading $j$ so that $Kh(L) = \bigoplus_{i,j}Kh^{i,j}(L)$. The graded Euler characteristic of $Kh(L)$ is the unnormalized Jones polynomial:
$$(q+q^{-1}) V_L(q^2) = \sum_{i,j}(-1)^i\text{rank}~Kh^{i,j}(L)q^j.$$

The support of $Kh(L)$ lies on a finite number of slope 2 lines with respect to the bigrading. Therefore, 
it is convenient to define the $\delta$-grading by $\delta=j- 2i$ so that $Kh(L)=\bigoplus_\delta Kh^\delta(L)$. Also, either all the $\delta$-gradings of $Kh(L)$ are odd, or they all are even. Let $\delta_{\text{min}}$ be the minimum $\delta$-grading where $Kh(L)$ is nontrivial and $\delta_{\text{max}}$ be the maximum $\delta$-grading where $Kh(L)$ is nontrivial. Then $Kh(L)$ is said to be {\it $[\delta_{\text{min}},\delta_{\text{max}}]$-thick}, and  the {\it Khovanov width of $L$} is defined as $$w_{Kh}(L)=\frac{1}{2}(\delta_{\text{max}}-\delta_{\text{min}})+1.$$

In this paper, we show the following:
\begin{itemize}
\item If a crossing in a link diagram is width-preserving (defined in Section \ref{twistedlinks}), then it can be replaced with an alternating rational tangle and the Khovanov width does not change (Theorem \ref{alt-tangle}).
\item We compute the Khovanov width of all closed 3-braids (Theorem \ref{3width}).
\item We determine the Turaev genus of all closed 3-braids, up to an additive error of at most 1.
\item We show that for closed 3-braids the Khovanov width and odd Khovanov width are equal (Corollary \ref{oddwidth}).
\end{itemize}

The paper is organized as follows. In Section \ref{background}, we review some properties of Khovanov homology. In Section \ref{twistedlinks}, we describe the behavior of Khovanov width when a crossing is replaced by an alternating rational tangle. In Section \ref{3-braid sec}, the Khovanov width of any closed 3-braid is computed. Finally, in Section \ref{oddsec} we show that Khovanov width and odd Khovanov width for closed 3-braids are equal.

\newpage

\noindent{\bf Acknowledgements.} I wish to thank Scott Baldridge, Oliver Dasbach, Mikhail Khovanov, and Peter Ozsv\'ath  for many helpful conversations. A portion of the work for this article was done while the author was visiting Columbia University in the Fall of 2008. He thanks the department of mathematics for their hospitality.

%---------------------------------------------------------------------%
\section{Khovanov homology background}
\label{background}

In this section, we give background material on Khovanov homology. If $D$ is a diagram for $L$, then denote the Khovanov homology of $L$ by either $Kh(L)$ or $Kh(D)$. Similarly, let $w_{Kh}(L)$ and $w_{Kh}(D)$ equivalently denote the Khovanov width of $L$. If $\bb{F}$ is a field, then let $Kh(L;\bb{F})$ denote $Kh(L)\otimes\bb{F}$ and $w_{Kh}(L;\bb{F})$ denote the width of $Kh(L;\bb{F})$. 

Let $L_1$ and $L_2$ be oriented links, and let $C$ be a component of $L_1$. Denote by $l$ the linking number of $C$ with its complement $L_1-C$. Let $L_1^\prime$ be the link $L_1$ with the orientation of $C$ reversed. Denote the mirror image of $L_1$ by $\overline{L_1}$ and the disjoint union of $L_1$ and $L_2$ by $L_1\sqcup L_2$. The following proposition was proved by Khovanov in \cite{kho}.
\begin{proposition}[Khovanov]
\label{kholink}
For $i,j\in\bb{Z}$ there are isomorphisms
\begin{eqnarray*}
Kh^{i,j}(L_1^\prime) & \cong & Kh^{i+2l,j+2l}(L_1),\\
Kh^{i,j}(\overline{L_1};\bb{Q}) & \cong & Kh^{-i,-j}(L_1;\bb{Q})\\
\text{Tor}(Kh^{i,j}(\overline{L_1})) & \cong & \text{Tor}(Kh^{1-i,-j}(L_1)),\text{ and}\\
Kh^{i,j}(L_1\sqcup L_2) & \cong & \oplus_{k,m\in\bb{Z}}(Kh^{k,m}(L_1) \otimes Kh^{i-k,j-m}(L_2))\oplus\\
& & \oplus_{k,m\in\bb{Z}}Tor^{\bb{Z}}_1(Kh^{k,m}(L_1),Kh^{i-k+1,j-m}(L_2))
\end{eqnarray*}
\end{proposition}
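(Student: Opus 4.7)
The four isomorphisms are structural consequences of the definition of the Khovanov chain complex, which assigns to an oriented diagram $D$ a cube of resolutions $\dlb D\drb$ equipped with a differential built from the Frobenius algebra $A=\ZZ[X]/(X^2)$; its bigrading is obtained from the cube's internal bigrading by an orientation-dependent shift determined by the signed crossing counts $n_\pm$ of $D$. I would prove each claim by comparing the complexes directly, handling the four parts in turn.

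For the first isomorphism, the crucial point is that $\dlb D_1\drb$ and its differential depend only on the unoriented diagram underlying $D_1$, so reversing the orientation of $C$ leaves the bigraded complex unchanged prior to the orientation-dependent shift. A crossing has its sign flipped under the reorientation exactly when one strand lies in $C$ and the other in $L_1-C$; writing $p$ and $q$ for the numbers of positive and negative such mixed crossings, so that $l=(p-q)/2$, the reversal sends $n_+\mapsto n_+-2l$ and $n_-\mapsto n_-+2l$, while internal crossing counts remain unchanged. Substituting these into the Khovanov grading shift formulas produces the stated bigrading translation on the chain level, which then passes to homology.

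For the mirror statements, reversing every crossing of $D_1$ interchanges $0$- and $1$-smoothings throughout the cube and reverses the direction of each edge; combined with the Frobenius self-duality of $A$, which dualizes merges into splits and vice versa, this yields an isomorphism
\[
C^{i,j}(\overline{L_1})\;\cong\;\operatorname{Hom}\bigl(C^{-i,-j}(L_1),\,\ZZ\bigr)
\]
of bigraded complexes of free abelian groups. The universal coefficient theorem, applied in each quantum grading separately, gives a natural short exact sequence
\[
0\to \operatorname{Ext}\bigl(Kh^{1-i,-j}(L_1),\ZZ\bigr)\to Kh^{i,j}(\overline{L_1})\to \operatorname{Hom}\bigl(Kh^{-i,-j}(L_1),\ZZ\bigr)\to 0.
\]
Tensoring with $\QQ$ kills the Ext-term and identifies a finite-dimensional $\QQ$-vector space with its own dual, yielding the second isomorphism. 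Passing instead to torsion subgroups and using the standard fact that $\operatorname{Ext}(G,\ZZ)\cong\operatorname{Tor}(G)$ for finitely generated abelian groups $G$ delivers the third.

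Finally, for the disjoint union, a state of $D_1\sqcup D_2$ is canonically a pair consisting of a state of $D_1$ and a state of $D_2$, and the TQFT functor underlying Khovanov homology sends a disjoint union of circles to the tensor product of the corresponding copies of $A$. A check that merge and split cobordisms act compatibly under this identification shows
\[
C^{\bullet,\bullet}(D_1\sqcup D_2)\;\cong\;C^{\bullet,\bullet}(D_1)\ot C^{\bullet,\bullet}(D_2)
\]
as bigraded complexes of free abelian groups, and the algebraic K\"unneth formula applied in each bigrading produces the last isomorphism. The main bookkeeping obstacle throughout is reconciling sign conventions for the differential when dualizing the complex in the mirror case; once these conventions are set, each isomorphism is essentially built into the definition of the complex.
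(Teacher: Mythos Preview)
The paper does not supply its own proof of this proposition: it is stated as a result of Khovanov and cited to \cite{kho} without argument. Your sketch follows the standard route by which these isomorphisms are established in Khovanov's original paper---comparing the unshifted cube complex under reorientation, dualizing the complex for the mirror and invoking the universal coefficient theorem, and identifying the disjoint-union complex with a tensor product to apply K\"unneth---so there is nothing to compare beyond noting that you have reconstructed the expected argument.

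One small caution on the first isomorphism: your bookkeeping for how $n_\pm$ change under reversing $C$ is correct, but when you push this through the Khovanov grading shifts $i=r-n_-$ and $j=q_{\mathrm{int}}+n_+-2n_-$, the quantum grading moves by $6l$ rather than $2l$. The statement as printed in the paper appears to use a nonstandard (or possibly misprinted) shift in $j$; the $\delta$-graded consequence drawn immediately afterward is nonetheless consistent with the correct formula, since the paper rephrases it in terms of $s=\text{neg}(D)-\text{neg}(D')$. You should be aware of this discrepancy if you intend to use the bigraded version directly.
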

Let $D$ be a diagram for $L_1$ and $D^\prime$ be the diagram $D$ with the component $C$ reversed. Denote the number of negative crossings in $D$ by neg$(D)$, where the sign of a crossing is as in Figure \ref{orresol}. Set $s=\text{neg}(D)-\text{neg}(D^\prime)$. Then Proposition \ref{kholink} implies 
\begin{eqnarray*}
Kh^\delta(D^\prime)&\cong  & Kh^{\delta+s}(D),\text{ and}\\
Kh^{\delta}(\overline{L_1};\bb{Q}) & \cong & Kh^{-\delta}(L_1;\bb{Q}).
\end{eqnarray*}

In \cite{kho2}, Khovanov introduced the reduced Khovanov homology. For a knot $K$, this theory is denoted $\wt{Kh}(K)$. For links of more than one component, the reduced Khovanov homology depends on a choice of a marked component, and hence is denoted $\wt{Kh}(L,C)$, where $C$ is the marked component of $L$. Similar to the unreduced version, $\wt{Kh}(L,C)$ is a bigraded $\bb{Z}$-module with homological grading $i$ and Jones grading $j$ so that $\wt{Kh}(L,C)=\bigoplus_{i,j}\wt{Kh}^{i,j}(L,C)$. The graded Euler characteristic of $\wt{Kh}(L,C)$ is the ordinary Jones polynomial:
$$V_L(q^2) = \sum_{i,j}(-1)^i\text{rank}~\wt{Kh}^{i,j}(L,C)q^j.$$
As with Khovanov homology, if $\wt{\delta}_{\text{min}}$ is the minimum $\delta$-grading where $\wt{Kh}(L,C)$ is nontrivial and $\wt{\delta}_{\text{max}}$ is the maximum $\delta$-grading where $\wt{Kh}(L,C)$ is nontrivial, then we say that $\wt{Kh}(L,C)$ is $[\wt{\delta}_{\text{min}},\wt{\delta}_{\text{max}}]$-thick. The reduced Khovanov width is defined as $w_{\wt{Kh}}(L) = \frac{1}{2}(\wt{\delta}_{\text{max}}-\wt{\delta}_{\text{min}})+1$. 

Asaeda and Przytycki \cite{ap} show that there is a long exact sequence relating reduced and unreduced Khovanov homology. 
\begin{theorem}[Asaeda-Przytycki]
\label{untored}
There is a long exact sequence relating the reduced and unreduced versions of Khovanov homology:
$$\cdots\to\wt{Kh}^{i,j+1}(L,C)\to Kh^{i,j}(L)\to\wt{Kh}^{i,j-1}(L,C)\to\wt{Kh}^{i+1,j+1}(L,C)\to\cdots$$
\end{theorem}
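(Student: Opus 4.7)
The plan is to exhibit the long exact sequence as the homological consequence of a natural short exact sequence of chain complexes arising from multiplication by $x$ on the marked circle. Recall that in Khovanov's cube construction each circle in a resolution contributes a tensor factor of the Frobenius algebra $V = \bb{Z}\langle 1, x\rangle$ with $\deg 1 = +1$ and $\deg x = -1$, and the reduced complex $\wt{CKh}(L,C)$ is obtained by replacing the tensor factor corresponding to the marked circle (the one containing $C$) in every resolution by $\bb{Z}$ in $q$-grading $0$. The starting algebraic input is the graded short exact sequence of $\bb{Z}$-modules
$$0 \to \bb{Z}\{-1\} \xrightarrow{\;\cdot\, x\;} V \xrightarrow{\;\mathrm{mod}\;x\;} \bb{Z}\{+1\} \to 0,$$
where $\{s\}$ denotes an upward shift of $q$-grading by $s$.

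First I would apply this sequence tensor-factor-wise at the marked circle of every vertex of the Khovanov cube, keeping the unmarked tensor factors untouched, and verify that the resulting vertex-wise maps commute with the edge maps of the cube. This amounts to checking that multiplication by $x$ on a distinguished tensor factor commutes with both the merge $m\colon V\otimes V \to V$ and the split $\Delta\colon V \to V\otimes V$; the first is automatic from commutativity of $V$, and the second follows from the identities $\Delta(x) = x\otimes x$ and $\Delta(1) = 1\otimes x + x\otimes 1$. Because a merge involving a marked input produces a marked output, and a split of the marked circle produces exactly one new circle containing $C$, the choice of distinguished tensor factor is canonical at every vertex. Assembling these sequences yields a short exact sequence of bigraded chain complexes
$$0 \to \wt{CKh}(L,C)\{-1\} \to CKh(L) \to \wt{CKh}(L,C)\{+1\} \to 0.$$

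The associated long exact sequence in cohomology, combined with the identification $\wt{Kh}(L,C)\{s\}^{i,j} = \wt{Kh}^{i,j-s}(L,C)$, then produces the stated sequence with connecting map of bidegree $(1,2)$. The main obstacle is the naturality check on the differentials, especially in the split case where the marked circle splits into two circles, only one of which carries the marked component: one must confirm that multiplication by $x$ on the single marked input commutes with $\Delta$ viewed as landing in the tensor product equipped with multiplication by $x$ on the correct output factor (and symmetrically for the quotient map). Once this local compatibility is established at every edge of the cube, the existence of the long exact sequence is purely formal.
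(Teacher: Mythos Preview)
The paper does not give its own proof of this theorem: it is quoted as a result of Asaeda--Przytycki with a citation, and then immediately used to deduce Corollary~\ref{redwid}. So there is no paper proof to compare against.

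Your argument is the standard one and is correct. The only points worth tightening are the ones you already flag: (i) the subcomplex obtained by forcing the marked tensor factor to lie in $xV$ and the quotient complex obtained by killing $xV$ on the marked factor are both genuine chain complexes (the edge maps $m$ and $\Delta$ send $xV$ on the marked factor to $xV$ on the marked factor, which you verify), and (ii) these two complexes are isomorphic up to the grading shift $\{2\}$, so both deserve the name $\wt{CKh}(L,C)$ after the appropriate shift. Once those local checks are in place, the short exact sequence of complexes and the resulting long exact sequence, with the grading bookkeeping you give, produce exactly the stated sequence.
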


\begin{corollary}
\label{redwid}
Let $L$ be a link with marked component $C$. Then $Kh(L)$ is $[\delta_{\text{min}},\delta_{\text{max}}]$-thick if and only if  $\wt{Kh}(L,C)$ is $[\delta_{\text{min}}+1,\delta_{\text{max}}-1]$-thick. Hence $w_{Kh}(L) - 1 = w_{\wt{Kh}}(L)$.
\end{corollary}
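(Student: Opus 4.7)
The plan is to read Theorem \ref{untored} through the $\delta$-grading. The four consecutive entries
$\wt{Kh}^{i,j+1}(L,C)\to Kh^{i,j}(L)\to\wt{Kh}^{i,j-1}(L,C)\to\wt{Kh}^{i+1,j+1}(L,C)$
sit at $\delta$-gradings $\delta+1,\delta,\delta-1,\delta-1$ respectively, where $\delta=j-2i$. In particular each $\wt{Kh}^{i,j}$ appears as the middle term of a shifted segment
$Kh^{i,j+1}(L)\to\wt{Kh}^{i,j}(L,C)\xrightarrow{\partial}\wt{Kh}^{i+1,j+2}(L,C)$
in which the outer $Kh$ term is one above in $\delta$ and the connecting-map target lies at the same $\delta$. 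Dually, $\wt{Kh}^{i,j}(L,C)$ is the middle term of a segment $\wt{Kh}^{i-1,j-2}\to\wt{Kh}^{i,j}\to Kh^{i,j-1}$ whose right $Kh$ term is one below in $\delta$.

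Using the first segment I first prove $\wt{Kh}^{i,j}(L,C)=0$ whenever $\delta>\delta_{\max}-1$: here $Kh^{i,j+1}$ has $\delta$-grading $\delta+1>\delta_{\max}$ and therefore vanishes, so $\partial$ is injective. Since $\wt{Kh}^{i+1,j+2}$ has the same $\delta$-grading as $\wt{Kh}^{i,j}$, iterating produces an injection $\wt{Kh}^{i,j}(L,C)\hookrightarrow\wt{Kh}^{i+n,j+2n}(L,C)$ for every $n\ge 0$. Finite generation of $\wt{Kh}(L,C)$ (inherited from the finite Khovanov chain complex of any diagram for $L$) kills the right-hand side for large $n$, so $\wt{Kh}^{i,j}=0$. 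Dually, using the second segment and iterating surjections, any $\wt{Kh}^{i,j}(L,C)$ with $\delta<\delta_{\min}+1$ is a quotient of $\wt{Kh}^{i-n,j-2n}(L,C)$ and therefore vanishes.

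For sharpness at the top, I choose $(i,j)$ with $j-2i=\delta_{\max}$ and $Kh^{i,j}(L)\ne 0$. In the segment $\wt{Kh}^{i,j+1}\to Kh^{i,j}\to\wt{Kh}^{i,j-1}$ the left group has $\delta$-grading $\delta_{\max}+1$, which vanishes by the previous paragraph, so $Kh^{i,j}\hookrightarrow\wt{Kh}^{i,j-1}$, forcing $\wt{Kh}^{\delta_{\max}-1}(L,C)\ne 0$. The symmetric argument at $\delta_{\min}$ (using surjectivity onto $Kh^{i,j}$) gives $\wt{Kh}^{\delta_{\min}+1}(L,C)\ne 0$. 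This shows that $Kh(L)$ being $[\delta_{\min},\delta_{\max}]$-thick forces $\wt{Kh}(L,C)$ to be exactly $[\delta_{\min}+1,\delta_{\max}-1]$-thick; the converse is then automatic because the thickness endpoints are uniquely determined by the theory on each side. The width identity is the one-line calculation $\tfrac{1}{2}((\delta_{\max}-1)-(\delta_{\min}+1))+1=\tfrac{1}{2}(\delta_{\max}-\delta_{\min})=w_{Kh}(L)-1$.

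The only real nuisance will be the bookkeeping of the index shifts when iterating the injections and surjections; the trick that makes it go through is the observation that both $\partial$ in the first segment and the analogous map in the dual segment preserve the $\delta$-grading, so a single application of the thickness hypothesis controls the entire iteration, which then terminates by the finite support of $\wt{Kh}(L,C)$.
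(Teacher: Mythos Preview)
Your argument is correct and follows essentially the same approach as the paper: both rewrite the long exact sequence of Theorem~\ref{untored} in the $\delta$-grading, observe that the connecting homomorphism preserves $\delta$, iterate it to obtain injections (respectively surjections) along a fixed diagonal, and then invoke finite generation of $\wt{Kh}(L,C)$ to force vanishing outside $[\delta_{\min}+1,\delta_{\max}-1]$. Your handling of the converse---deducing it immediately from the forward implication together with uniqueness of the thickness endpoints---is slightly slicker than the paper's direct argument, but the underlying idea is the same.
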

\begin{proof}
The long exact sequence of Theorem \ref{untored} can be rewritten with respect to the $\delta$-grading as
$$\cdots\to\wt{Kh}^{\delta+1}(L,C)\to Kh^\delta(L)\to\wt{Kh}^{\delta-1}(L,C)\to\wt{Kh}^{\delta-1}(L,C)\to\cdots.$$
Suppose $Kh(L)$ is $[\delta_{\text{min}},\delta_{\text{max}}]$-thick. Therefore $\wt{Kh}^\delta(L,C)=0$ for $\delta>\delta_{\text{max}} +1$ and for $\delta< \delta_{\text{min}} - 1$. 

Suppose $\wt{Kh}^{\delta_{\text{max}}+1}(L,C)$ is nontrivial. Then for some $i$ and $j$ where $j-2i=\delta_{\text{max}}+1$, the group $\wt{Kh}^{i,j}(L,C)$ is nontrivial. By repeatedly applying the long exact sequence of Theorem \ref{untored}, one sees that $\wt{Kh}^{i+k,j+2k}(L,C)$ is nontrivial for all $k\geq 0$. However, the group $\wt{Kh}^{\delta_{\text{max}}+1}(L,C)$ is finitely generated. Hence $\wt{Kh}^{\delta_{\text{max}}+1}(L,C)$ is trivial. Similarly, one can show that $\wt{Kh}^{\delta_{\text{min}}-1}(L,C)$ is also trivial.

The long exact sequence also implies that $\wt{Kh}^{\delta_{\text{max}}-1}(L,C)$ and $\wt{Kh}^{\delta_{\text{min}}+1}(L,C)$ are nontrivial. Thus $\wt{Kh}(L,C)$ is $[\delta_{\text{min}}+1,\delta_{\text{max}}-1]$-thick.

Suppose $\wt{Kh}(L,C)$ is $[\delta_{\text{min}}+1,\delta_{\text{max}}-1]$-thick. Similar to the case above, if either $Kh^{\delta_{\text{min}}}(L)$ or $Kh^{\delta_{\text{max}}}(L)$ are trivial, then one can show that $\wt{Kh}^{\delta_{\text{min}}+1}(L,C)$ or $\wt{Kh}^{\delta_{\text{max}}-1}(L,C)$ respectively are infinitely generated. Hence $Kh(L)$ is $[\delta_{\text{min}},\delta_{\text{max}}]$-thick.
\end{proof}
Corollary \ref{redwid} implies that if $C$ and $C^\prime$ are two components of $L$, then $\wt{Kh}(L,C)$ is $[\wt{\delta}_{\text{min}},\wt{\delta}_{\text{max}}]$-thick if and only if $\wt{Kh}(L,C^\prime)$ is $[\wt{\delta}_{\text{min}},\wt{\delta}_{\text{max}}]$-thick. Hence, the notation $w_{\wt{Kh}}(L)$ is unambiguous.

\begin{figure}[h]
\includegraphics[scale = 0.4]{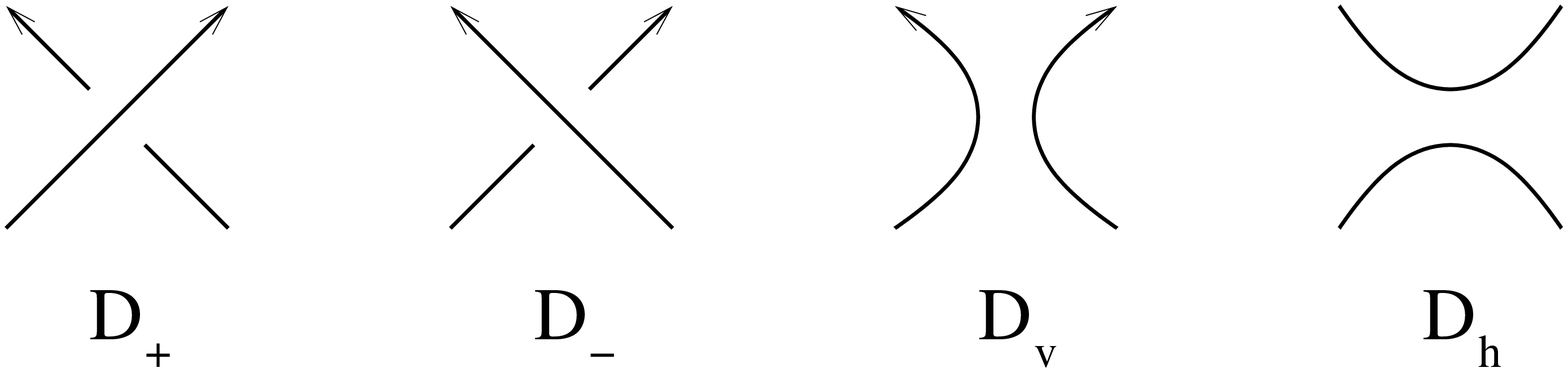}
\caption{The links in an oriented resolution. $D_+$ is a positive crossing, and $D_-$ is a negative crossing.}
\label{orresol}
\end{figure}

Let $D_+, D_-, D_v$ and $D_h$ be planar diagrams of links that agree outside a neighborhood of a distinguished crossing $x$ as in Figure \ref{orresol}. Define $e=\text{neg}(D_h)-\text{neg}(D_+)$. There are long exact sequences relating the Khovanov homology of each of these links. Khovanov \cite{kho} implicitly describes these sequences, and Viro \cite{vir} explicitly states both sequences. The graded versions are taken from Rasmussen \cite{ras} and Manolescu-Ozsv\'{a}th \cite{mo}.
\begin{theorem}[Khovanov]
\label{les}
There are long exact sequences
$$\cdots \to Kh^{i-e-1,j-3e-2}(D_h) \xrightarrow{}Kh^{i,j}(D_+) \xrightarrow{}Kh^{i,j-1}(D_v) \xrightarrow{}Kh^{i-e,j-3e-2}(D_h) \to\cdots$$
and
$$\cdots \to Kh^{i,j+1}(D_v) \xrightarrow{}Kh^{i,j}(D_-) \xrightarrow{}Kh^{i-e+1,j-3e+2}(D_h) \xrightarrow{}Kh^{i+1,j+1}(D_v) \to\cdots.$$
\end{theorem}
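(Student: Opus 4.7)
The plan is to derive both long exact sequences from Khovanov's original cube-of-resolutions construction. Recall that $CKh(D)$ decomposes as a direct sum over the vertices of the cube of $0/1$-resolutions of the crossings of $D$, with edge maps given by saddle cobordism maps. Fixing the distinguished crossing $x$ and splitting the cube along the $x$-axis into the two subcubes corresponding to the $0$-resolution and $1$-resolution at $x$ produces short exact sequences of chain complexes, and each long exact sequence in the theorem will be the associated long exact sequence in homology.

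For the positive crossing $D_+$: in Khovanov's convention the $0$-smoothing at $x$ is $D_v$ and the $1$-smoothing is $D_h$. The $0$-subcube is (a shift of) $CKh(D_v)$ and the $1$-subcube is (a shift of) $CKh(D_h)$, and the edges of the cube in the $x$-direction assemble into a chain map from the former to the latter. The total complex $CKh(D_+)$ is the mapping cone of this chain map, which gives a short exact sequence
$$0\to CKh(D_h)[a_h]\{b_h\}\to CKh(D_+)\to CKh(D_v)[a_v]\{b_v\}\to 0,$$
where $[\,\cdot\,]$ and $\{\,\cdot\,\}$ denote shifts in the homological and quantum gradings. The induced long exact sequence in homology is the first claimed sequence.

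For the negative crossing $D_-$ the argument is entirely analogous, but now the $0$-smoothing at $x$ is $D_h$ and the $1$-smoothing is $D_v$, so the roles are reversed and we instead obtain
$$0\to CKh(D_v)[a_v']\{b_v'\}\to CKh(D_-)\to CKh(D_h)[a_h']\{b_h'\}\to 0,$$
whose long exact sequence yields the second claimed sequence.

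The main obstacle is the bookkeeping of the grading shifts. Khovanov's normalization shifts the raw cube complex by $[-n_-]\{n_+ - 2n_-\}$, where $n_\pm$ counts positive and negative crossings; in passing from $D_\pm$ to the resolutions $D_v,D_h$ the counts $n_\pm$ change in a way controlled precisely by the auxiliary parameter $e = \mathrm{neg}(D_h)-\mathrm{neg}(D_+)$, since the $h$-resolution of $x$ can create or destroy negative crossings elsewhere (through orientation effects on nearby crossings), while the $v$-resolution does not. Combining these normalization shifts with the intrinsic shifts from the height of the subcube in the total cube will reproduce the displayed shifts $(i-e-1,j-3e-2)$, $(i-e,j-3e-2)$, $(i,j-1)$, $(i,j+1)$, $(i-e+1,j-3e+2)$, $(i+1,j+1)$ appearing in the statement. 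Once this arithmetic is matched, the exactness is immediate from the short exact sequences above.
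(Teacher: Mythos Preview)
The paper does not give its own proof of this theorem: it is stated as a cited result, attributed to Khovanov \cite{kho} (with the explicit graded versions taken from Rasmussen \cite{ras} and Manolescu--Ozsv\'ath \cite{mo}), and used as a black box throughout. So there is no in-paper argument to compare against.

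Your sketch is the standard and correct approach. The Khovanov complex of $D_\pm$ is indeed the mapping cone of the saddle map between the two subcubes obtained by fixing the resolution at $x$, and the long exact sequence of a mapping cone yields the claimed sequences once the grading shifts are tracked. One small clarification: you say the $v$-resolution ``does not'' change the negative-crossing count, but more precisely $D_v$ inherits the orientation of $D_\pm$ so $\mathrm{neg}(D_v)=\mathrm{neg}(D_\pm)\mp 0$ (the crossing $x$ itself is removed, contributing $0$ for $D_+$ and $-1$ for $D_-$), while $D_h$ must be given a \emph{new} orientation, which is why the parameter $e$ enters. The only part you have not actually carried out is the explicit arithmetic matching the shifts $(i-e-1,j-3e-2)$, etc.; this is routine but should be written down if you intend this as a self-contained proof rather than a reference to the literature.
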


When only the $\delta = j - 2i$ grading is considered, the long exact sequences become
$$\cdots \to Kh^{\delta - e}(D_h) \xrightarrow{f^{\delta-e}_+}Kh^\delta(D_+) \xrightarrow{g^\delta_+}Kh^{\delta-1}(D_v) \xrightarrow{h^{\delta-1}_+}Kh^{\delta-e-2}(D_h) \to\cdots$$
and
$$\cdots \to Kh^{\delta+1}(D_v) \xrightarrow{f^{\delta+1}_-}Kh^\delta(D_-) \xrightarrow{g^\delta_-}Kh^{\delta-e}(D_h) \xrightarrow{h^{\delta-e}_-}Kh^{\delta-1}(D_v) \to \cdots$$
There are versions of these long exact sequences where Khovanov homology is replaced with reduced Khovanov homology. In the reduced sequences, the gradings are identical to the unreduced sequences. 

\begin{figure}[h]
\includegraphics[scale=.4]{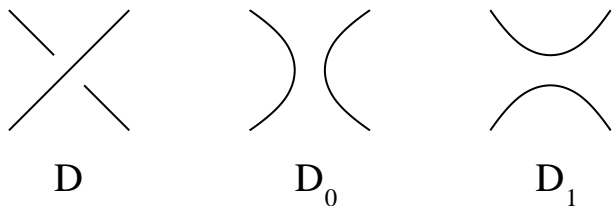}
\caption{The links in an unoriented resolution.}
\label{unresol}
\end{figure}

Let $D$, $D_0$ and $D_1$ be link diagrams differing only in a neighborhood of a crossing $x$ of $D$ (as in Figure \ref{unresol}) with associated link types $L$, $L_0$ and $L_1$ respectively.
The set $\mathcal{Q}$ of {\it quasi-alternating links} is the smallest set of links such that
\begin{itemize}
\item The unknot is in $\mathcal{Q}$.
\item If the link $L$ has a diagram with a crossing $x$ such that
\begin{enumerate}
\item both of the links, $L_0$ and $L_1$ are in $\mathcal{Q}$,
\item det$(L)=$ det$(L_0) +$det$(L_1),$
\end{enumerate}
\end{itemize}
then $L$ is in $\mathcal{Q}$.
We will say that $D$ is {\it quasi-alternating at $x$}. 

In \cite{lee}, Lee showed that alternating links have reduced Khovanov width 1. The set of alternating links is a proper subset of the set of quasi-alternating links. Manolescu and Ozsv\'{a}th \cite{mo} use the long exact sequences for reduced Khovanov homology to show that the same result holds for quasi-alternating links.
\begin{theorem}[Manolescu-Ozsv\'{a}th]
\label{qalt}
Let $L$ be a quasi-alternating link. Then $\wt{Kh}(L)$ is supported entirely in $\delta$-grading $-\sigma(L)$, where $\sigma(L)$ denotes the signature of the link.
\end{theorem}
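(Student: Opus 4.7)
My plan is to induct on the determinant, matching the recursive definition of quasi-alternating links. The base case is the unknot, where $\widetilde{Kh}$ is $\mathbb{Z}$ concentrated in bigrading $(0,0)$, so $\delta=0=-\sigma(\text{unknot})$. For the inductive step, let $D$ be a diagram of $L$ with a quasi-alternating crossing $x$ and resolutions $D_0, D_1$ representing quasi-alternating links $L_0, L_1$ with $\det L = \det L_0 + \det L_1$. By induction, for any field $\mathbb{F}$ the reduced homology $\widetilde{Kh}(L_i;\mathbb{F})$ is supported in $\delta$-grading $-\sigma(L_i)$ and has total dimension $\det(L_i)$.

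The workhorse is the reduced version of the skein long exact sequence from Theorem \ref{les} applied at $x$. Depending on the sign of $x$, one of $L_0, L_1$ plays the role of the oriented resolution $L_v$ and the other plays $L_h$. Because each of $\widetilde{Kh}(D_v;\mathbb{F})$ and $\widetilde{Kh}(D_h;\mathbb{F})$ is supported in a single $\delta$-grading, inspecting the LES $\delta$-grading by $\delta$-grading shows that $\widetilde{Kh}(D;\mathbb{F})$ is supported in at most two $\delta$-gradings, namely those determined from $-\sigma(L_v)$ and $-\sigma(L_h)$ by the grading shifts appearing in the sequence. Exactness then yields
\[ \dim_{\mathbb{F}} \widetilde{Kh}(L;\mathbb{F}) \leq \dim_{\mathbb{F}} \widetilde{Kh}(L_0;\mathbb{F}) + \dim_{\mathbb{F}} \widetilde{Kh}(L_1;\mathbb{F}) = \det L_0 + \det L_1 = \det L. \]
On the other hand, the general lower bound $\det L \leq \dim_{\mathbb{F}} \widetilde{Kh}(L;\mathbb{F})$ follows by writing $V_L(-1)$ in the $\delta$-grading and taking absolute values. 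Hence equality holds throughout, which forces the connecting maps in the LES to kill one of the two candidate $\delta$-gradings and concentrate $\widetilde{Kh}(L;\mathbb{F})$ in a single $\delta$-grading. Running the argument for $\mathbb{F}=\mathbb{Q}$ and for each $\mathbb{F}_p$ simultaneously shows that $\widetilde{Kh}(L;\mathbb{Z})$ is torsion-free and also supported in that single $\delta$-grading.

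It remains to identify this $\delta$-grading as $-\sigma(L)$. I would invoke the classical skein behavior of the signature at a quasi-alternating crossing: with the orientation and sign conventions of Figures \ref{orresol} and \ref{unresol}, one has $\sigma(L)-\sigma(L_v)=\pm 1$ (sign matching the sign of $x$) together with a compatible relation involving $\sigma(L_h)$ and $e=\text{neg}(D_h)-\text{neg}(D_{\pm})$. These identities are arranged precisely so that the two candidate $\delta$-gradings $-\sigma(L_v)+1$ and $-\sigma(L_h)+e+2$ (or their analogues from the negative-crossing sequence) both equal $-\sigma(L)$, finishing the induction.

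The main obstacle is this last step: matching the two \emph{a priori} distinct candidate $\delta$-gradings coming from $L_v$ and $L_h$ with each other and with $-\sigma(L)$. Everything else — thinness, the determinant rank count, and the extension to integer coefficients — follows mechanically from the LES and the inductive hypothesis, but the identification of the grading with $-\sigma(L)$ is genuinely a signature computation under oriented skein resolution that must be supplied separately.
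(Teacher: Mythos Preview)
The paper does not prove this theorem; it is quoted from Manolescu--Ozsv\'ath \cite{mo} and used as a black box. Your outline is essentially their original argument (induction on the determinant via the reduced skein sequence), so there is no in-paper proof to compare against.

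That said, there is a logical slip in your ordering. The equality $\dim_{\mathbb{F}}\widetilde{Kh}(L;\mathbb{F})=\det L$ does \emph{not} by itself force $\widetilde{Kh}(L;\mathbb{F})$ into a single $\delta$-grading. In the positive-crossing sequence the two candidate gradings are $v+1$ and $h+e$, where $v=-\sigma(L_v)$ and $h=-\sigma(L_h)$. If these were genuinely distinct and not differing by~$2$, the long exact sequence would break into two separate short exact pieces, giving support in \emph{both} gradings with total rank still equal to $\det L_v+\det L_h=\det L$. If they differed by exactly~$2$, the rank equality would force the connecting map to have rank zero, again leaving both gradings populated. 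In neither scenario does the determinant count collapse the support to one diagonal.

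What actually does the work is precisely the signature identity you postpone to the end: under the quasi-alternating determinant hypothesis one has $v+1=h+e$ (cf.\ the computation in the proof of Proposition~\ref{qaltgen}), so the two candidate gradings coincide \emph{a priori}. The long exact sequence then reduces to a single short exact sequence
\[
0 \to \widetilde{Kh}^{h}(D_h) \to \widetilde{Kh}^{\,v+1}(D) \to \widetilde{Kh}^{v}(D_v) \to 0,
\]
from which thinness, the rank formula, and the identification of the grading with $-\sigma(L)$ all follow at once. So the signature lemma is not merely the final labeling step but the entire content of the inductive step; once you place it first, the rest of your argument goes through as written.
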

Theorem \ref{qalt} together with Corollary \ref{redwid} imply that if $L$ is quasi-alternating, then $Kh(L)$ is $[-\sigma(L)-1,- \sigma(L)+1]$-thick and $w_{Kh}(L)=2$.
\medskip

Theorem \ref{les} directly implies the following corollary:
\begin{corollary}
\label{glue}
Let  $D_+,D_-,D_v$ and $D_h$ be as in Figure \ref{orresol}. Suppose $Kh(D_v)$ is $[v_{\text{min}},v_{\text{max}}]$-thick and $Kh(D_h)$ is $[h_{\text{min}},h_{\text{max}}]$-thick. Then $Kh(D_+)$ is $[\delta_{\text{min}}^+,\delta_{\text{max}}^+]$-thick, and $Kh(D_-)$ is  $[\delta_{\text{min}}^-,\delta_{\text{max}}^-]$-thick , where
\begin{equation*}
\delta^+_{\text{min}} =
\begin{cases}
\min\{v_{\text{min}} +1, h_{\text{min}} +e\} & \text{if $v_{\text{min}} \neq h_{\text{min}} + e+1$}\\
v_{\text{min}} + 1 & \text{if $v_{\text{min}}=h_{\text{min}}+e+1$ and $h^{v_{\text{min}}}_+$ is surjective}\\
v_{\text{min}} - 1 & \text{if $v_{\text{min}}=h_{\text{min}}+e+1$ and $ h^{v_{\text{min}}}_+$ is not surjective,}
\end{cases}
\end{equation*}
\begin{equation*}
\delta^+_{\text{max}} =
\begin{cases}
\max\{v_{\text{max}} +1, h_{\text{max}} +e\} & \text{if $v_{\text{min}} \neq h_{\text{max}} + e+1$}\\
v_{\text{max}} - 1 & \text{if $v_{\text{max}}=h_{\text{max}}+e+1$ and $h^{v_{\text{max}}}_+$ is injective}\\
v_{\text{max}} + 1& \text{if $v_{\text{max}}=h_{\text{max}}+e+1$ and $ h^{v_{\text{max}}}_+$ is not injectve,}
\end{cases}
\end{equation*}
\begin{equation*}
\delta^-_{\text{min}} =
\begin{cases}
\min\{v_{\text{min}} -1, h_{\text{min}} +e\} & \text{if $v_{\text{min}} \neq h_{\text{min}} + e-1$}\\
v_{\text{min}} + 1 & \text{if $v_{\text{min}}=h_{\text{min}}+e-1$ and $h^{v_{\text{min}}}_-$ is surjective}\\
v_{\text{min}} - 1 & \text{if $v_{\text{min}}=h_{\text{min}}+e-1$ and $ h^{v_{\text{min}}}_-$ is not surjective,}
\end{cases}
\end{equation*}
and
\begin{equation*}
\delta^-_{\text{max}} =
\begin{cases}
\max\{v_{\text{max}} -1, h_{\text{max}} +e\} & \text{if $v_{\text{max}} \neq h_{\text{max}} + e-1$}\\
v_{\text{max}} - 1 & \text{if $v_{\text{max}}=h_{\text{max}}+e-1$ and $h^{v_{\text{max}}}_+$ is injective}\\
v_{\text{max}} + 1 & \text{if $v_{\text{max}}=h_{\text{max}}+e-1$ and $ h^{v_{\text{max}}}_+$ is not injective.}
\end{cases}
\end{equation*}
\end{corollary}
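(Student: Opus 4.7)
The plan is to derive a $\delta$-graded version of each long exact sequence from Theorem \ref{les} and extract the support of $Kh(D_\pm)$ by tracking which terms are forced to vanish by exactness.

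Rewriting the first sequence in $\delta$-grading yields
$$\cdots \to Kh^{\delta+1}(D_v) \xrightarrow{h^{\delta+1}_+} Kh^{\delta-e}(D_h) \xrightarrow{f^{\delta-e}_+} Kh^\delta(D_+) \xrightarrow{g^\delta_+} Kh^{\delta-1}(D_v) \xrightarrow{h^{\delta-1}_+} Kh^{\delta-e-2}(D_h) \to \cdots,$$
and exactness at $Kh^\delta(D_+)$ shows that this group vanishes whenever both $Kh^{\delta-e}(D_h)$ and $Kh^{\delta-1}(D_v)$ vanish. Thus $Kh^\delta(D_+) = 0$ whenever $\delta$ lies outside $[\min\{v_{\min}+1,\, h_{\min}+e\},\, \max\{v_{\max}+1,\, h_{\max}+e\}]$, which recovers the generic values of $\delta^+_{\min}$ and $\delta^+_{\max}$. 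Conversely, when the two candidate extrema are not ``adjacent'' (i.e.\ do not differ by exactly two), exactly one of the neighboring terms is nonzero at the extreme $\delta$, and the exact sequence identifies $Kh^\delta(D_+)$ with a subquotient that picks up an explicit nonzero piece, so the bound is attained.

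The core step is the borderline case; I illustrate it for $\delta^+_{\min}$ under the assumption $v_{\min} = h_{\min}+e+1$. Here the two candidates $v_{\min}+1$ and $v_{\min}-1 = h_{\min}+e$ are separated by a gap of two, and at $\delta = v_{\min}-1$ the sequence collapses to
$$Kh^{v_{\min}}(D_v) \xrightarrow{h^{v_{\min}}_+} Kh^{h_{\min}}(D_h) \xrightarrow{f^{h_{\min}}_+} Kh^{v_{\min}-1}(D_+) \to 0,$$
because $Kh^{v_{\min}-2}(D_v) = 0$ by minimality of $v_{\min}$ and $Kh^{h_{\min}-2}(D_h) = 0$ by minimality of $h_{\min}$. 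Thus $Kh^{v_{\min}-1}(D_+) \cong \operatorname{coker}(h^{v_{\min}}_+)$, which vanishes precisely when $h^{v_{\min}}_+$ is surjective, forcing $\delta^+_{\min}$ up to $v_{\min}+1$ in that case and leaving it at $v_{\min}-1$ otherwise. Analogous arguments, using injectivity in place of surjectivity at the top of the support and applying the second long exact sequence of Theorem \ref{les} for the $D_-$ assertions, handle the remaining three formulas.

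The main obstacle will be organizing the case analysis cleanly: for each of the four extrema and its subcases one must re-index the sequence, verify which neighboring terms are automatically zero from the hypotheses on the supports of $Kh(D_v)$ and $Kh(D_h)$, and confirm that a single (in)jectivity condition on $h^{\cdot}_\pm$ genuinely determines the extremum rather than shifting it only partially.
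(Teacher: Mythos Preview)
Your proposal is correct and is exactly the approach the paper has in mind: the paper gives no explicit proof, stating only that ``Theorem \ref{les} directly implies the following corollary,'' and your argument is precisely the routine unpacking of the $\delta$-graded long exact sequences that justifies this claim. Your closing remark about confirming that the single (in)jectivity condition pins down the extremum (rather than allowing further collapse) is a legitimate point to keep track of, but it does not require any new idea beyond re-reading the sequence at the next $\delta$-level.
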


%----------------------------------------------------------------------------------------------%
\section{Twisted Links}
\label{twistedlinks}

\subsection{Khovanov width of twisted links}

Let $\tau=C(a_1,\dots,a_m)$ be a rational tangle, and let $D$ be a link diagram with a distinguished crossing $x$. Suppose the slopes of the arcs near $x$ are $\pm 1$. Define {\it $D$ twisted at $x$ by $\tau$}  to be the diagram obtained by removing $x$ and inserting $\tau$ such that a neighborhood of the rightmost crossing or topmost crossing of $\tau$ in $D_\tau$ looks exactly like a neighborhood of $x$ in $D$. The resulting link diagram is denoted $D_\tau$. See Figure \ref{twist}.
\begin{figure}[h]
\includegraphics[scale=.25]{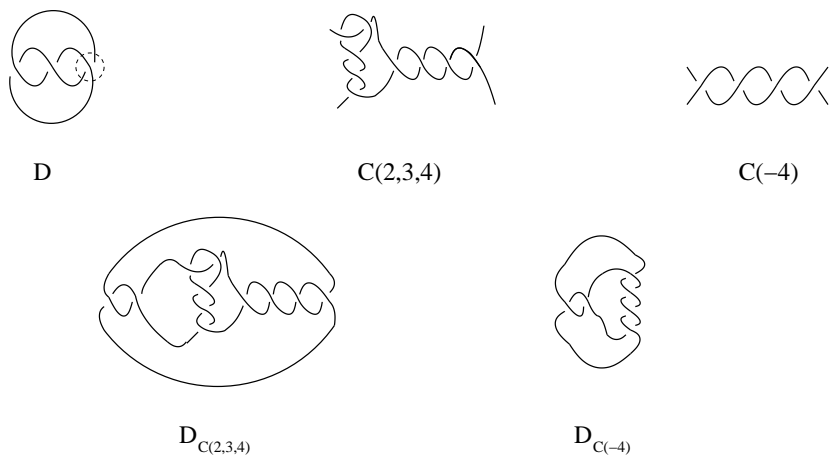}
\caption{The diagram $D$ twisted by $C(2,3,4)$ and $C(-4)$.}
\label{twist}
\end{figure}

The main result of this section, Theorem \ref{alt-tangle}, is a generalization of a proposition proved by Champanerkar and Kofman in \cite{cha-kof}.
\begin{proposition}[Champanerkar-Kofman]
Let $D$ be a link diagram with crossing $x$, and let $\tau$ be an alternating rational tangle such that $D$ is twisted at $x$ by $\tau$. If $D$ is quasi-alternating at $x$, then $D_\tau$ is quasi-alternating at each crossing of $\tau$.
\end{proposition}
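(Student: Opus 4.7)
The plan is to induct on the number of crossings $n$ of $\tau$. The base case $n=1$ is immediate: $\tau$ is a single crossing and $D_\tau = D$, so the conclusion reduces to the hypothesis that $D$ is quasi-alternating at $x$.

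For the inductive step, I would fix an arbitrary crossing $y$ of $\tau$ and verify the two defining properties for $D_\tau$ to be quasi-alternating at $y$: namely, both smoothings $D_{\tau_0}$ and $D_{\tau_1}$ are quasi-alternating, and $\det(D_\tau)=\det(D_{\tau_0})+\det(D_{\tau_1})$. The first property is handled by the following structural observation about alternating rational tangles: for any crossing $y$, both smoothings $\tau_0$ and $\tau_1$ are again alternating rational tangles with strictly fewer crossings than $\tau$. One smoothing merely shortens a single twist in the continued-fraction expansion by one crossing; the other produces a bigon that disappears under a Reidemeister II move, again leaving an alternating rational tangle (the special case to watch out for is when $y$ lies in a twist of length one, where the Reidemeister II simplification collapses two adjacent entries of the continued fraction into one). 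Applied to the pairs $(D,\tau_0)$ and $(D,\tau_1)$, the inductive hypothesis then yields that $D_{\tau_0}$ and $D_{\tau_1}$ are quasi-alternating.

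For the determinant identity, I would use the tangle-fraction calculus. Writing $D$ outside a neighborhood of $x$ as a two-string tangle $T$ and realizing $D_\tau$ as the numerator closure $N(T+\tau)$, the determinant is a linear function of the fraction $p/q$ of $\tau$: one has $\det(D_\tau)=|\alpha p+\beta q|$ for integers $\alpha,\beta$ depending only on $T$. Resolving a crossing of a rational tangle decomposes its fraction additively, $p=p_0+p_1$ and $q=q_0+q_1$, where $p_i/q_i$ is the fraction of $\tau_i$. Because $\tau$ is alternating, the fractions of $\tau_0$ and $\tau_1$ sit in a common sign sector relative to the functional $(\alpha,\beta)$, so $\alpha p_0+\beta q_0$ and $\alpha p_1+\beta q_1$ share the common sign of $\alpha p+\beta q$ and the absolute values add to give the required identity.

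The main obstacle is precisely this sign control. Without the alternating hypothesis one only obtains the inequality $\det(D_\tau)\leq \det(D_{\tau_0})+\det(D_{\tau_1})$, and the equality that witnesses the quasi-alternating property at $y$ depends on the alternating structure forcing the linear functional $\alpha p+\beta q$ not to change sign under the decomposition $p=p_0+p_1$, $q=q_0+q_1$. Once this sign bookkeeping is in place, the inductive step closes, and since $y$ was an arbitrary crossing of $\tau$ the conclusion holds at every crossing of $\tau$.
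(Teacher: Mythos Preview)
The paper does not supply its own proof of this proposition: it is quoted verbatim as a result of Champanerkar and Kofman from \cite{cha-kof}, serving only to motivate the paper's generalization, Theorem~\ref{alt-tangle}. So there is no in-paper argument to match yours against directly.

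Your outline is essentially the Champanerkar--Kofman argument and is correct in substance. Two small points. First, the simplification you describe as a Reidemeister~II move is not one: all crossings of an alternating rational tangle extending $x$ have the same type, so no RII pair is ever produced. What happens at the ``other'' smoothing is that the affected twist block collapses via isotopy (or Reidemeister~I kinks, in diagram language), leaving a strictly shorter alternating rational tangle. Second, the sign control you flag as the main obstacle becomes transparent once you unwind the base case: the hypothesis that $D$ is quasi-alternating at $x$ reads $|\alpha+\beta|=|\alpha|+|\beta|$ with $\alpha,\beta\neq 0$, forcing $\alpha$ and $\beta$ to share a sign. Since every alternating rational tangle extending $x$ has fraction $p/q$ with $p,q\ge 0$, the decomposition $p=p_0+p_1$, $q=q_0+q_1$ under resolution keeps $\alpha p_i+\beta q_i$ on the same side of zero, and the absolute values add.

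For contrast, the paper's proof of its \emph{generalization} (Theorem~\ref{alt-tangle}) runs the induction in the opposite direction: rather than resolving a crossing of $\tau$ to reduce, it builds $\tau$ from the single crossing $x$ by repeatedly inserting $C(\pm 2)$ (Lemma~\ref{step}), tracking the width-preserving condition via the Khovanov long exact sequences of Theorem~\ref{les} and Corollary~\ref{glue}. That argument never touches determinants or tangle fractions, and it does not recover the quasi-alternating conclusion of the cited proposition --- only the equality of Khovanov widths. Conversely, your determinant-based argument gives the stronger quasi-alternating statement but says nothing about width in the non-quasi-alternating setting; the two approaches are genuinely complementary.
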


Let $D$ be a diagram with crossing $x$. Resolve $D$ at the crossing $x$ to obtain diagrams $D_v$ and $D_h$. Suppose $Kh(D_v)$ is $[v_{\text{min}},v_{\text{max}}]$-thick and $Kh(D_h)$ is $[h_{\text{min}},h_{\text{max}}]$-thick. As before, set $e=\text{neg}(D_h) - \text{neg}(D_+)$, where $D_+$ is the same diagram as $D$ except if the crossing $x$ in $D$ is negative, then it is changed to positive in $D_+$. The diagram $D$ is said to be width-preserving at $x$ if either of the following conditions hold.
\begin{itemize}
\item If $x$ is a positive crossing in $D$, then both $v_{\text{min}}\neq h_{\text{min}} + e +1$ and $v_{\text{max}}\neq  h_{\text{max}}+e+1$.
\item If $x$ is a negative crossing in $D$, then both $v_{\text{min}}\neq h_{\text{min}} + e -1$ and $v_{\text{max}}\neq h_{\text{max}}+ e -1$.
\end{itemize}
\begin{proposition}
\label{qaltgen}
Let $D$ be a link diagram with crossing $x$. If $D$ is quasi-alternating at $x$, then $D$ is width-preserving at $x$.
\end{proposition}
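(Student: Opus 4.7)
The plan is to pin down the $\delta$-grading supports of $Kh(L_v)$ and $Kh(L_h)$ exactly using the quasi-alternating hypothesis, then check that these supports place us in the generic cases of Corollary \ref{glue}.

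Since $D$ is quasi-alternating at $x$, both resolutions $L_v$ and $L_h$ are quasi-alternating, and so is $L$ itself. Applying Theorem \ref{qalt} together with Corollary \ref{redwid} to all three links, I would record
\[
v_{\min}=-\sigma(L_v)-1,\quad v_{\max}=-\sigma(L_v)+1,\quad h_{\min}=-\sigma(L_h)-1,\quad h_{\max}=-\sigma(L_h)+1,
\]
and the fact that $Kh(L)$ is $[-\sigma(L)-1,-\sigma(L)+1]$-thick. After substitution, both inequalities in the definition of width-preserving (for either sign of $x$) collapse into a single statement about the difference $\sigma(L_h)-\sigma(L_v)$.

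The next step is to use the $\delta$-graded long exact sequence of Theorem \ref{les} (in its reduced form) to force $\sigma(L_v)=\sigma(L)+1$ and $\sigma(L_h)=\sigma(L)+e$ when $x$ is positive, with the analogous formulas $\sigma(L_v)=\sigma(L)-1$, $\sigma(L_h)=\sigma(L)+e$ in the negative case (using the second sequence). For $x$ positive, at the grading $\delta=-\sigma(L)$ the reduced sequence reads
\[
\wt{Kh}^{-\sigma(L)-e}(L_h)\xrightarrow{f}\wt{Kh}^{-\sigma(L)}(L)\xrightarrow{g}\wt{Kh}^{-\sigma(L)-1}(L_v)\xrightarrow{h}\wt{Kh}^{-\sigma(L)-e-2}(L_h).
\]
The middle group has rank $\det(L)=\det(L_v)+\det(L_h)$, while the two outer groups can be nonzero only when their $\delta$-gradings agree with $-\sigma(L_v)$ and $-\sigma(L_h)$ respectively. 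If the $L_h$-group on the left vanished, $g$ would be injective, forcing $\det(L)\le\det(L_v)$ and hence $\det(L_h)\le 0$; the opposite case is symmetric, and the case where both vanish forces $\wt{Kh}^{-\sigma(L)}(L)=0$. Since $\det(L_v),\det(L_h)\ge 1$ by an easy induction from the unknot through the definition of $\mathcal{Q}$, all three scenarios are impossible. Hence both alignments must hold, yielding the signature equalities.

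Substituting back gives $\sigma(L_h)-\sigma(L_v)=e-1$ for positive $x$ and $\sigma(L_h)-\sigma(L_v)=e+1$ for negative $x$, which differ from the exceptional values $e+1$ and $e-1$ of the width-preserving condition by $2$. The main obstacle is the long-exact-sequence case analysis ruling out degenerate alignments in which only one of $\wt{Kh}(L_v)$ or $\wt{Kh}(L_h)$ contributes to $\wt{Kh}(L)$ at the single surviving $\delta$-grading; the crucial input there is the positivity of the determinant for quasi-alternating links, which is what converts the exactness constraint into an arithmetic contradiction.
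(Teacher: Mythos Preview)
Your proposal is correct and follows essentially the same approach as the paper's own proof: both arguments use Theorem \ref{qalt} and Corollary \ref{redwid} to place $Kh(D_v)$ and $Kh(D_h)$ in bands of width two centered at $-\sigma$, and then invoke the reduced long exact sequence together with the determinant additivity $\det(D)=\det(D_v)+\det(D_h)$ to force the single supporting $\delta$-gradings of $\wt{Kh}(D_v)$ and $\wt{Kh}(D_h)$ to satisfy $v=h+e\mp 1$ (for $x$ positive/negative), which is exactly the non-exceptional case of the width-preserving definition. Your write-up is in fact more explicit than the paper at the crucial step: where the paper simply asserts that the three nonzero groups must occupy ``three consecutive spots \dots\ such that $\wt{Kh}(D_v)$ and $\wt{Kh}(D_h)$ are not adjacent,'' you spell out the rank inequalities coming from injectivity/surjectivity in the exact sequence and use $\det(L_v),\det(L_h)\ge 1$ to rule out the degenerate alignments.
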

\begin{proof}
Suppose $D$ is quasi-alternating at $x$.  Let $D_v$ and $D_h$ be the two resolutions of $D$ at $x$. Since $D$ is quasi-alternating at $x$, it follows that $D_v$ and $D_h$ are also quasi-alternating. Theorem \ref{qalt} implies that $\wt{Kh}(D)$, $\wt{Kh}(D_v)$ and $\wt{Kh}(D_h)$ are each supported entirely in one $\delta$-grading. Suppose $\wt{Kh}(D_v)$ is supported in $\delta$-grading $v$ and $\wt{Kh}(D_h)$ is supported in $\delta$-grading $h$. Corollary \ref{untored} implies that $Kh(D_v)$ is $[v-1,v+1]$-thick and $Kh(D_h)$ is $[h-1,h+1]$-thick. Let $e=\text{neg}(D_h)-\text{neg}(D_+)$ where $D_+$ is the same diagram as $D$ except if $x$ is negative in $D$, then it is changed to positive in $D_+$. Since $\text{det}(D)=\text{det}(D_v)+\text{det}(D_h)$, it follows that the nontrivial parts of $\wt{Kh}(D)$, $\wt{Kh}(D_v)$ and $\wt{Kh}(D_h)$ lie in three consecutive spots in the long exact sequence of Theorem \ref{les} such that $\wt{Kh}(D_v)$ and $\wt{Kh}(D_h)$ are not adjacent. Therefore, if $x$ is positive, then $v=h+e-1$, and if $x$ is negative, then $v=h+e+1$. The result follows directly.
\end{proof}

\begin{lemma} 
\label{step}
Let $D$ be an oriented link diagram with crossing $x$, and let $\tau$ be an alternating rational tangle with exactly two crossings $x_0$ and $x_1$.  Let $D_\tau$ be $D$ twisted at $x$ by $\tau$. If $D$ is width-preserving at $x$, then for any orientation, $D_\tau$ is width-preserving at $x_0$ and $x_1$. Moreover, $w_{Kh}(D)=w_{Kh}(D_\tau)$.
\end{lemma}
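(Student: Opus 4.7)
My plan is to apply Corollary \ref{glue} to $D_\tau$ at the crossing $x_0$, and then deduce the statement at $x_1$ by symmetry. The first step is a diagrammatic analysis of the two resolutions of $D_\tau$ at $x_0$. Since $\tau$ is a two-crossing alternating rational tangle inserted in place of $x$, resolving $D_\tau$ at $x_0$ either (a) returns the original diagram $D$ (with $x_1$ occupying the position of $x$), or (b) yields one of $D_v$ or $D_h$ together with a Reidemeister I loop created by the remaining crossing $x_1$. Which resolution gives which depends on the Conway form of $\tau$ and the sign of $x$, but in every case Reidemeister I invariance of Khovanov homology ensures that the thickness interval of each resolution of $D_\tau$ at $x_0$ equals that of $D$, $D_v$, or $D_h$.

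The next step is to compute the shift $e_0 := \text{neg}(D_{\tau,h}^{x_0}) - \text{neg}(D_{\tau,+}^{x_0})$ at $x_0$ and relate it to the shift $e = \text{neg}(D_h) - \text{neg}(D_+)$ at $x$ in $D$; tracking the sign of $x_1$ in the resolved diagrams shows that $e_0$ differs from $e$ in a predictable way depending on the sign of $x$. With these identifications, Corollary \ref{glue} applied to $D_\tau$ at $x_0$ expresses $[\delta_{\text{min}}(D_\tau), \delta_{\text{max}}(D_\tau)]$ in terms of the thickness intervals of $D$ and of $D_v$ or $D_h$. The width-preserving hypothesis at $x$, reinterpreted via Corollary \ref{glue} applied at $x$ in $D$, gives explicit formulas $\delta_{\text{min}}(D) = \min\{v_{\text{min}}+1, h_{\text{min}}+e\}$ and $\delta_{\text{max}}(D) = \max\{v_{\text{max}}+1, h_{\text{max}}+e\}$ in the positive-crossing case (with the analogous formulas in the negative case). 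Plugging these into the formulas at $x_0$ shows on the one hand that the ``off-diagonal'' degeneracies of Corollary \ref{glue} cannot occur at $x_0$, so $D_\tau$ is width-preserving there, and on the other hand that $\delta_{\text{min}}(D_\tau) = \delta_{\text{min}}(D)$ and $\delta_{\text{max}}(D_\tau) = \delta_{\text{max}}(D)$, yielding $w_{Kh}(D_\tau) = w_{Kh}(D)$. Since $x_0$ and $x_1$ play interchangeable roles in $\tau$, the identical argument applies at $x_1$.

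The main obstacle will be the case analysis. The sign of $x$, the precise Conway form of the two-crossing alternating rational tangle $\tau$, and the ``any orientation'' clause in the hypothesis all influence both the identification between the two resolutions of $D_\tau$ at $x_0$ and the triple $(D, D_v, D_h)$, and the precise value of $e_0 - e$. Each individual case reduces to a mechanical check using Corollary \ref{glue}, but organizing them cleanly --- in particular, showing that the ``any orientation'' clause adds nothing genuinely new, which one handles by using Proposition \ref{kholink} to convert reversed orientations into global $\delta$-grading shifts --- will be the main bookkeeping challenge.
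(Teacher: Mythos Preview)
Your approach is essentially the paper's: identify the two resolutions of $D_\tau$ at $x_i$ with (shifted copies of) $D$, $D_v$, or $D_h$, track the change from $e$ to $e_i$, feed this into Corollary~\ref{glue}, and use Proposition~\ref{kholink} to reduce the ``any orientation'' clause to a single orientation choice. One slip: you assert $\delta_{\min}(D_\tau)=\delta_{\min}(D)$ and $\delta_{\max}(D_\tau)=\delta_{\max}(D)$, but in fact the thickness interval of $D_\tau$ is a uniform translate of that of $D$ (for instance $[\alpha+1,\beta+1]$ when $x$ is positive and $\tau=C(2)$, or $[\alpha-e,\beta-e]$ in other cases), so only the width, not the interval itself, is preserved---this does not affect your argument but should be corrected. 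Also be aware that in the $C(-2)$ case the paper splits into two subcases according to whether the two strands at $x$ lie in the same component of $D$, since this governs which orientations are available on $D_\tau$ and hence how the resolutions $D_v^i$, $D_h^i$ match up with $D$, $D_v$, $D_h$; you allude to this under ``case analysis'' but it is where the component-reversal part of Proposition~\ref{kholink} actually enters.
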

\begin{proof} 
There are two ways to twist $D$ at $c$, either horizontally or vertically. Let $\tau_1=C(2)$ and $\tau_2=C(-2)$. 
\begin{figure}
\end{figure}
For each case, it is only necessary to prove the result for one choice of orientations on $D$ and $D_\tau$. Proposition \ref{kholink} implies the result for all other choices of orientations on $D$ and $D_\tau$. 

Let $D_v$ and $D_h$ be the diagrams obtained by resolving $D$ at $x$, and let $D^i_v$ and $D^i_h$ be the diagrams obtained by resolving $D_\tau$ at the crossing $x_i$ for $i=0,1$. Suppose $Kh(D_v)$ and $Kh(D_h)$ are $[v_{\text{min}},v_{\text{max}}]$-thick and $[h_{\text{min}},h_{\text{max}}]$-thick respectively. Let $e=\text{neg}(D_h) - \text{neg}(D_+)$ where $D_+$ is the same diagram as $D$ except if the crossing $x$ is negative in $D$, then it is changed to positive in $D_+$. Similarly set $e_i=\text{neg}(D^i_h)-\text{neg}(D^i_+)$ where $D^i_+$ is the same diagram as $D_\tau$ except if the crossing $x_i$ is negative in $D_\tau$, then it is changed to positive in $D_+$.

\begin{figure}[h]
\includegraphics[scale=.3]{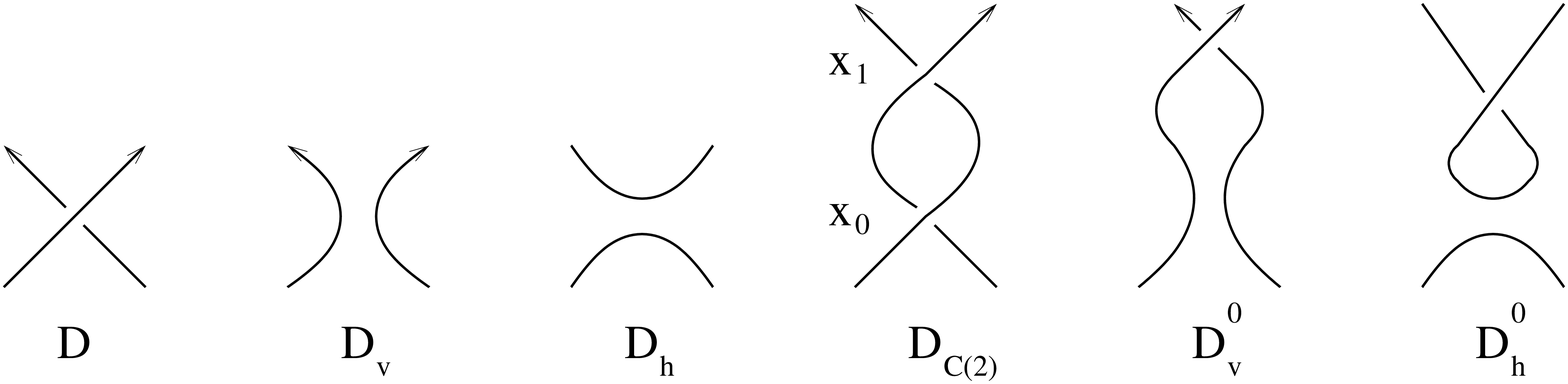}
\caption{The resolutions for $x$ positive and $\tau=C(2)$.}
\label{xor}
\end{figure}
Suppose $x$ is positive. Choose the orientation on $D_{\tau_1}$ given in Figure \ref{xor}. Also, Figure \ref{xor} shows the resolutions $D^0_v$ and $D^0_h$. 

Observe that $x_i$ is positive in $D_{\tau_1}$ for $i=0,1$.
Corollary \ref{glue} implies that $Kh(D)$ is $[\alpha,\beta]$-thick where $\alpha=\min\{v_{\text{min}}+1,h_{\text{min}}+e\}$ and $\beta=\max\{v_{\text{max}}+1,h_{\text{max}}+e\}$. The diagrams $D^i_v$ and $D$ represent the same link, and the diagrams $D_h$ and $D^i_h$ represent the same link. Therefore, $Kh(D^i_v)$ is $[\alpha,\beta]$-thick and $Kh(D^i_h)$ is $[h_{\text{min}},h_{\text{max}}]$-thick. The diagram $D^i_h$ is the same as the diagram $D_h$ except $D^i_h$ has one additional negative Reidemeister I twist, and hence $\text{neg}(D^i_h)=\text{neg}(D_h)+1$. Since the diagrams $D$ and $D^i_v$ are identical, $\text{neg}(D)=\text{neg}(D^i_v)$. Thus $e_i=e+1$.
Since $D$ is width-preserving, it follows that $v_{\text{min}}\neq h_{\text{min}} +e +1$ and $v_{\text{max}}\neq h_{\text{max}}+ e + 1$. Therefore,
$$h_{\text{min}}+e_i+1 = h_{\text{min}} + e +2 \neq \alpha,$$
and
$$h_{\text{max}}+e_i+1 = h_{\text{max}} + e + 2 \neq \beta.$$
Hence $D_{\tau_1}$ is width-preserving at $x_i$. Also, Corollary \ref{glue} implies that $Kh(D_{\tau_1})$ is $[\alpha+1,\beta+1]$-thick, and thus $w_{Kh}(D) = w_{Kh}(D_{\tau_1})$.

\begin{figure}[h]
\includegraphics[scale=.3]{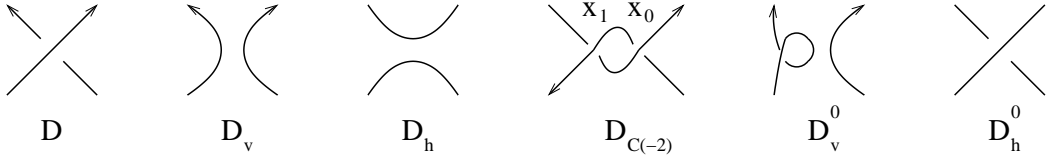}
\caption{The resolutions for $x$ positive, $\tau=C(-2)$, and with the depicted strands of $D$ in the same component.}
\label{xunsame}
\end{figure}

The possible orientations of $D_{\tau_2}$ depend on whether the strands forming the crossing $x$ are in the same component of $D$ or different components of $D$. Suppose they are in the same component. Choose the orientation on $D_{\tau_2}$ given in Figure \ref{xunsame}. Also, Figure \ref{xunsame} shows the resolutions $D^0_v$ and $D^0_h$. 

Observe that $x_i$ is positive in $D_{\tau_2}$ for $i=0,1$.
With suitably chosen orientations, we have
\begin{equation}
\label{samecomp1}
\text{neg}(D_v)=\text{neg}(D)=\text{neg}(D^i_h),
\end{equation}
and
\begin{equation}
\label{samecomp2}
\text{neg}(D^i_+)=\text{neg}(D_h).
\end{equation}
The diagram $D^i_v$ is the same as $D_v$ except $D^i_v$ has one component reversed and an additional positive Reidemeister I twist. Therefore, Proposition \ref{kholink} implies that $Kh(D^i_v)$ is $[v_{\text{min}}-e,v_{\text{max}}-e]$-thick. Also, equations \ref{samecomp1} and \ref{samecomp2} imply that $e_i=-e$. The diagram $D^i_h$ is identical to $D$. Therefore, $Kh(D^i_h)$ is $[\alpha,\beta]$-thick where $\alpha=\min\{v_{\text{min}}+1,h_{\text{min}}+e\}$ and $\beta=\max\{v_{\text{max}}+1,h_{\text{max}}+e\}$.
Since $D$ is width-preserving at $x$, we have $v_{\text{min}}\neq h_{\text{min}}+e+1$ and $v_{\text{max}}\neq h_{\text{max}}+e+1$. Therefore,
$$\alpha+e_i+1 = \min\{v_{\text{min}}+1,h_{\text{min}}+e\}-e +1 = \min\{v_{\text{min}}-e+2,h_{\text{min}}+1\}\neq v_{\text{min}}-e,$$
and
$$\beta + e_i +1 = \max\{v_{\text{max}}+1,h_{\text{max}}+e\} -e +1 = \max\{v_{\text{max}}-e+2,h_{\text{max}}+1\}\neq v_{\text{max}}-e.$$
Thus $D_{\tau_2}$ is width-preserving at $x_i$. Moreover, Corollary \ref{glue} implies that $Kh(D_{\tau_2})$ is \newline$[\alpha-e,\beta-e]$-thick, and hence $w_{Kh}(D)=w_{Kh}(D_{\tau_2}).$

\begin{figure}[h]
\includegraphics[scale=.3]{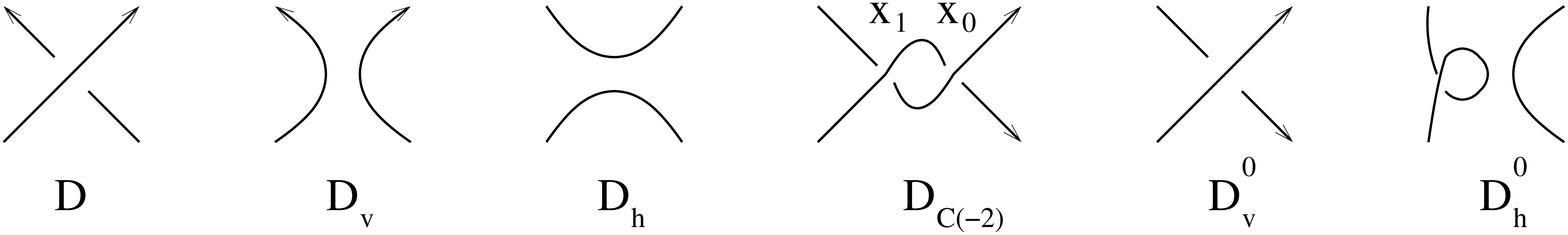}
\caption{The resolutions for $x$ positive, $\tau=C(-2)$, and with the depicted strands of $D$ in different components.}
\label{xundiff}
\end{figure}
Suppose the strands forming the crossing $x$ are in different components of the link. Choose the orientation on $D_{\tau_2}$ given in Figure \ref{xundiff}. Also, Figure \ref{xundiff} shows the resolutions $D^0_v$ and $D^0_h$. 
\begin{figure}
\end{figure}
Observe that $x_i$ is a negative crossing in $D_{\tau_2}$ for $i=0,1$. Orient $D^i_h$ so that it represents the same oriented link as $D_v$. With a suitably chosen orientation on $D_h$, we have
\begin{equation}
\label{diffcomp1}
\text{neg}(D)=\text{neg}(D_v)=\text{neg}(D^i_h),
\end{equation}
and
\begin{equation}
\label{diffcomp2}
\text{neg}(D_h)+1=\text{neg}(D^i_+)=\text{neg}(D^i_v).
\end{equation}
Equations \ref{diffcomp1} and \ref{diffcomp2} imply that $e_i=-e-1$.
The diagram $D^i_v$ is the same as $D$ except $D^i_v$ has one component reversed. Equations \ref{diffcomp1} and \ref{diffcomp2} along with Proposition \ref{kholink} imply that $Kh(D^i_v)$ is $[\alpha-e-1,\beta-e-1]$-thick where $\alpha=\min\{v_{\text{min}}+1,h_{\text{min}}+e\}$ and $\beta=\max\{v_{\text{max}}+1,h_{\text{max}}+e\}$. Since $D^i_h$ and $D_v$ represent the same oriented link, it follows that $Kh(D^i_h)$ is $[v_{\text{min}},v_{\text{max}}]$-thick. Since $D$ is width-preserving at $x$, we have $v_{\text{min}}\neq h_{\text{min}}+ e +1$ and $v_{\text{max}}\neq h_{\text{max}}+e +1$. Therefore,
$$\alpha -e -1 = \min\{v_{\text{min}}-e,h_{\text{min}}-1\}\neq v_{\text{min}}-e-2= v_{\text{min}}+e_i-1,$$
and
$$\beta-e-1 = \max\{v_{\text{max}}-e,h_{\text{max}}-1\}\neq v_{\text{max}}-e-2 = v_{\text{max}}+e_i-1.$$
Thus $D_{\tau_2}$ is width-preserving at $x_i$. Moreover, Corollary \ref{glue} implies that $Kh(D_{\tau_2})$ is $[\alpha-e-1,\newline \beta-e-1]$-thick, and hence $w_{Kh}(D)=w_{Kh}(D_{\tau_2})$,

The case where $x$ is a negative crossing in $D$ is proved similarly.
\end{proof}

\begin{theorem}
\label{alt-tangle}
Let $D$ be a link diagram with crossing $x$, $\tau$ be an alternating rational tangle, and $D_\tau$ be the diagram $D$ twisted at $x$ by $\tau$. If $D$ is width-preserving at $x$, then $w_{Kh}(D) = w_{Kh}(D_\tau)$.
\end{theorem}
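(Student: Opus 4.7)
The plan is to prove Theorem \ref{alt-tangle} by induction on the number $n$ of crossings of $\tau$, with Lemma \ref{step} providing the inductive engine. The case $n=1$ is vacuous, since a one-crossing rational tangle is $C(\pm 1)$ and inserting it at $x$ reproduces $D$ itself (the insertion convention matches the orientation and sign of the single crossing of $\tau$ with that of $x$).

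For the inductive step, I would realize $\tau = C(a_1,\ldots,a_m)$ (all $a_i$ of the same sign) as the final term of a sequence $\tau_1, \tau_2, \ldots, \tau_n = \tau$, in which $\tau_1$ is a single crossing and each $\tau_{k+1}$ is built from $\tau_k$ by performing a $C(\pm 2)$ twist at a distinguished crossing $x_k$ of $\tau_k$. The choice of $C(2)$ or $C(-2)$, and of which crossing to twist at, is read off the continued-fraction expansion of $\tau$: a $C(2)$ twist extends the current row of same-sign twists by one crossing, while a $C(-2)$ twist begins a new perpendicular row. Together these two moves realize the standard continued-fraction construction of any alternating rational tangle one crossing at a time.

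At each step $k$, I apply Lemma \ref{step} to the diagram $D_{\tau_k}$ at the crossing $x_k$ with the two-crossing tangle $C(\pm 2)$. The lemma both preserves Khovanov width, giving $w_{Kh}(D_{\tau_k}) = w_{Kh}(D_{\tau_{k+1}})$, and asserts that the two newly created crossings of $\tau_{k+1}$ are width-preserving in $D_{\tau_{k+1}}$. Declaring $x_{k+1}$ to be one of these freshly produced crossings keeps the hypothesis of Lemma \ref{step} satisfied as the induction proceeds; starting from the given width-preserving hypothesis on $(D, x)$ at stage $k=1$, the property propagates automatically along the sequence. Telescoping the resulting chain of equalities yields $w_{Kh}(D) = w_{Kh}(D_{\tau_1}) = \cdots = w_{Kh}(D_{\tau_n}) = w_{Kh}(D_\tau)$.

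The main obstacle I anticipate is the combinatorial bookkeeping needed to justify the construction of $\tau$ by the above sequence of $C(\pm 2)$ twists at \emph{freshly created} crossings. The induction breaks if one is ever forced to twist at a crossing introduced several steps earlier, whose width-preserving status is no longer guaranteed by the most recent application of Lemma \ref{step}. Fortunately this is a standard feature of rational tangles: reading off the continued fraction from the innermost twist outward yields exactly the required sequence of moves, each occurring at an outermost (hence most recently added) crossing. Verifying this carefully for each case of the sign choice in $C(\pm 2)$ and for each possible transition between adjacent coefficients $a_i, a_{i+1}$ in the continued fraction is the one piece of genuine work required beyond invoking Lemma \ref{step}.
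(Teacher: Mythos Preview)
Your proposal is correct and takes essentially the same approach as the paper: both arguments build $D_\tau$ from $D$ by a sequence of $C(\pm 2)$ insertions and invoke Lemma \ref{step} at each stage to preserve both the width and the width-preserving hypothesis at the next crossing to be twisted. The paper organizes the sequence slightly differently---first assembling the skeleton $\tau' = C(2,1,\ldots,1)$ by alternating $C(2)$ and $C(-2)$, then filling in the coefficients $a_m, a_{m-1}, \ldots, a_1$ in turn---but your inside-out ordering and your explicit attention to always twisting at a freshly created crossing capture exactly the same inductive mechanism.
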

\begin{proof}
Let $\tau=C(a_1,\dots,a_m)$. Since $\tau$ is alternating, either $a_i>0$ for all $i$ or $a_i<0$ for all $i$. Suppose $a_i>0$ for all $i$. Beginning with the diagram $D$ and the crossing $x$, one can alternate twisting the diagram by $C(2)$ and $C(-2)$. Replacing the appropriate crossings $m$ times results in the diagram $D_{\tau^\prime}$ where $\tau^\prime=C(2,1,\dots,1)$.  Lemma \ref{step} implies that each crossing in $D_{\tau^\prime}$ is width-preserving, and $w_{Kh}(D)=w_{Kh}(D_{\tau^\prime})$.

Replace crossings corresponding to the $m$-th term in $\tau^\prime$ by $C(2)$ until the resulting diagram is obtained by twisting $D$ by $C(2,1,\dots,1,a_m)$ at $x$. Next, replace crossings corresponding to the $(m-1)$-st term in $C(2,1,\dots,1,a_m)$ with $C(-2)$ until the resulting diagram is obtained by twisting $D$ by $C(2,1,\dots,1,a_{m-1},a_m)$ at $x$. Continue replacing crossings in the tangle by either $C(2)$ or $C(-2)$ until the resulting diagram is obtained by twisting $D$ by $C(a_1,\dots,a_m)$ at $x$.
Since at each step, the only tangles used are $C(2)$ and $C(-2)$, Lemma \ref{step} implies that $w_{Kh}(D)=w_{Kh}(D_\tau)$. The case where each $a_i<0$ is proved similarly.
\end{proof}

\begin{figure}[h]
\includegraphics[scale=.18]{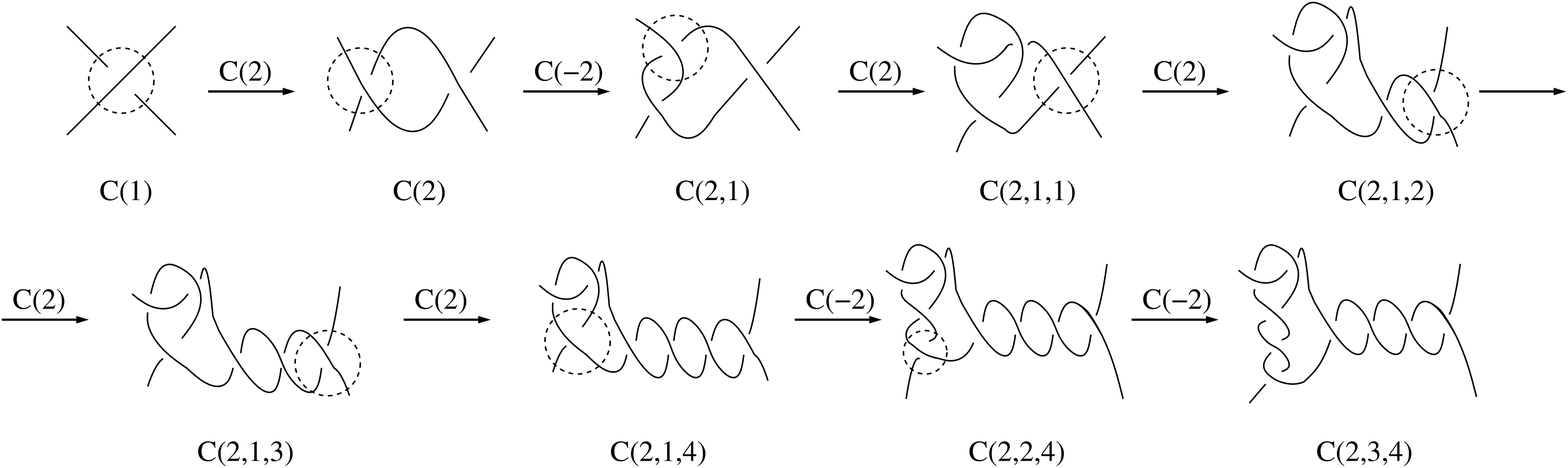}
\caption{The inductive process of Theorem \ref{alt-tangle}. At each step, the circled crossing is replaced with either $C(2)$ or $C(-2)$.}
\label{induct}
\end{figure}

\begin{remark}
Watson \cite{wat} proves that $w_{Kh}(D_\tau)$ is bounded by $w_{Kh}(D_v)$ and $w_{Kh}(D_h)$. By assuming that $D$ is width-preserving at $x$, we are able to strengthen the result and calculate $w_{Kh}(D_\tau)$.
\end{remark}

Suppose $D$ is an oriented diagram with crossing $x$. If $D$ is twisted at $x$ by $\tau_n=C(n)$ as in Figure \ref{braidtwist}, then the assumptions of Theorem \ref{alt-tangle} can be relaxed and a slightly stronger result holds. The following technical result is needed to compute the Khovanov width of closed 3-braids.
\begin{figure}[h]
\includegraphics[scale=.35]{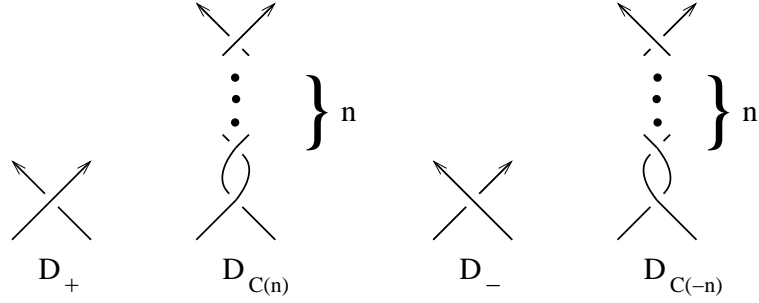}
\caption{For $n>0$, twist $D_+$ by $C(n)$ and twist $D_-$ by $C(-n)$. Then choose the above orientations for $D_{C(n)}$ and $D_{C(-n)}$.}
\label{braidtwist}
\end{figure}
\begin{figure}
\end{figure}
\begin{proposition}
\label{band}
Suppose $D$ is an oriented diagram with crossing $x$. Suppose $D$ is twisted at $x$ by $\tau_n=C(n)$ as in Figure \ref{braidtwist}. Let $D_v$ and $D_h$ be the two resolutions of $D$ at $x$. Suppose $Kh(D_v)$ is $[v_{\text{min}},v_{\text{max}}]$-thick and $Kh(D_h)$ is $[h_{\text{min}},h_{\text{max}}]$-thick. Let $\alpha_{\pm} = \min\{v_{\text{min}}\pm 1,h_{\text{min}}+e\}$ and $\beta_{\pm}=\max\{v_{\text{max}}\pm 1, h_{\text{max}}+e\}$.
\begin{enumerate}
\item Let $n>0$. Suppose that $v_{\text{min}}\neq h_{\text{min}} + e +1$. If $v_{\text{max}}=h_{\text{max}}+e+1$, then suppose that there exist integers $i$ and $j$ such that $j-2i=v_{\text{max}}$, $Kh^{i,j}(D_v)$ is nontrivial, and $Kh^{k,l}(D_h)$ is trivial for all $k$ whenever $l\leq j-3e-1$. Then $Kh(D_{\tau_n})$ is $[n+\alpha_+,n+\beta_+]$-thick.
\item Let $n<0$. Suppose that $v_{\text{max}}\neq h_{\text{max}} +e -1$. If $v_{\text{min}}=h_{\text{min}} + e -1$, then suppose that there exist integers $i$ and $j$ such that $j-2i=v_{\text{min}}$, $Kh^{i,j}(D_v)$ is nontrivial, and $Kh^{k,l}(D_h)$ is trivial for all $k$ whenever $l\geq j-3e-1$. Then $Kh(D_{\tau_n})$ is $[n+\alpha_-,n+\beta_-]$-thick.
\end{enumerate}
\end{proposition}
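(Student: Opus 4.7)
The plan is to prove part (1) by induction on $n$, with part (2) following by a mirror-symmetric argument (or by applying part (1) to the mirror image and invoking Proposition \ref{kholink}). For the inductive step, resolve $D_{\tau_{n+1}}$ at the outermost crossing of the tangle $C(n+1)$: the vertical resolution recovers $D_{\tau_n}$, while the horizontal resolution agrees with $D_h$ up to a Reidemeister I twist, so the two Khovanov homologies differ only by a uniform grading shift. This reduces each step to Corollary \ref{glue} augmented by one application of the long exact sequence of Theorem \ref{les} at a single critical bigrading.

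For the base case, the hypothesis $v_{\min} \neq h_{\min} + e + 1$ is exactly the non-coincidence assumption at the minimum in Corollary \ref{glue}, giving the correct lower bound. On the maximum side, when $v_{\max} \neq h_{\max} + e + 1$, Corollary \ref{glue} yields the upper bound directly. When $v_{\max} = h_{\max} + e + 1$, choose $(i, j)$ as provided by the hypothesis. The connecting map $Kh^{i, j}(D_v) \to Kh^{i-e, j-3e-1}(D_h)$ from Theorem \ref{les} has vanishing target, hence is zero, and exactness of
\[
Kh^{i, j+1}(D_+) \to Kh^{i, j}(D_v) \to Kh^{i-e, j-3e-1}(D_h) = 0
\]
forces $Kh^{i, j+1}(D_+) \neq 0$, producing a nontrivial class in $\delta$-grading $v_{\max} + 1 = \beta_+$. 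The remaining cases of Corollary \ref{glue} simultaneously preclude any support above $\beta_+$.

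For the induction step, assume $Kh(D_{\tau_n})$ is $[n + \alpha_+, n + \beta_+]$-thick. The long exact sequence at the outermost crossing of $D_{\tau_{n+1}}$ has vertical resolution $D_{\tau_n}$ and horizontal resolution a Reidemeister I variant of $D_h$, contributing $Kh(D_{\tau_n})$ and a shifted copy of $Kh(D_h)$ respectively. Since the vanishing hypothesis on $Kh(D_h)$ involves only $D_h$ and the linking data determining $e$, it persists (after the uniform shift) at the stage relevant for each step, and the base-case LES argument repeats to place a new nontrivial class in $\delta$-grading $(n+1) + \beta_+$. The inductive lower bound on $\delta_{\min}$ of $Kh(D_{\tau_n})$ combined with the minimum-side hypothesis ensures the non-coincidence condition of Corollary \ref{glue} at the minimum, completing the induction.

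The main obstacle I anticipate is careful bookkeeping of bigraded shifts and sign conventions: the Reidemeister I twist picked up at each inductive step modifies the local parameter $e$, and the orientation on the horizontal resolution depends on whether the two strands through $x$ lie in the same or in different components of $D$ (as already seen in Lemma \ref{step}). Both must be tracked so that the vanishing condition $Kh^{k, l}(D_h) = 0$ for $l \leq j - 3e - 1$ is correctly transported to each inductive stage. Part (2) for $n < 0$ follows by the analogous argument with signs flipped, or by part (1) applied to the mirror image together with Proposition \ref{kholink}.
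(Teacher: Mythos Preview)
Your proposal is correct and follows essentially the same strategy as the paper: induct on $n$, resolve $D_{\tau_{n+1}}$ at a crossing of the tangle to recover $D_{\tau_n}$ as the vertical resolution and a Reidemeister~I variant of $D_h$ as the horizontal, update the shift to $e_{n+1}=e+n$, and re-run the long exact sequence argument. The paper's only refinement is to take the sharper statement $Kh^{i,j+n}(D_{\tau_n})\cong Kh^{i,j}(D_v)$ as the inductive hypothesis (rather than mere nontriviality of some class), and your orientation worries are unnecessary here since the setup of Figure~\ref{braidtwist} forces all crossings of $\tau_n$ to have the same sign.
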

\begin{proof}
Let $n>0$. Since $D$ is twisted at $x$ by $\tau_n$ as in Figure \ref{braidtwist}, it follows that $x$ is a positive crossing. If both $v_{\text{min}}\neq h_{\text{min}}+ e +1$ and $v_{\text{max}}\neq h_{\text{max}}+e +1$, then $D$ is width-preserving at $x$. It follows from the proof of Theorem \ref{alt-tangle} that $Kh(D_{\tau_n})$ is $[n+\alpha_+, n+\beta_+]$-thick.

Suppose $v_{\text{min}}\neq h_{\text{min}} +e + 1$ and $v_{\text{max}} = h_{\text{max}}+e+1$. Thus there exist integers $i$ and $j$ such that $j-2i=v_{\text{max}}$, $Kh^{i,j}(D_v)$ is nontrivial, and $Kh^{k,l}(D_h)$ is trivial for all $k$ and for all $l\leq j-3e-1$. Since $v_{\text{min}}\neq h_{\text{min}}+e+1$, it follows that the minimum $\delta$-grading where $Kh(D_{\tau_n})$ is nontrivial is $n+\alpha_+$. We show, by induction on $n$, that $Kh^{i,j+n}(D_{\tau_n})\cong Kh^{i,j}(D_v)$.  This implies that the maximum $\delta$-grading supporting $Kh(D_{\tau_n})$ is $n+\beta_+$.

If $n=1$, then the long exact sequence of Theorem \ref{les} looks like
$$0\to Kh^{i,j+1}(D)\to Kh^{i,j}(D_v)\to Kh^{i-e,j-3e-1}(D_h)\to\cdots.$$
By hypothesis, $Kh^{i-e,j-3e-1}(D_h)$ is trivial, and hence $Kh^{i,j+1}(D)\cong Kh^{i,j}(D_v)$.

Suppose, by way of induction, that $Kh^{i,j+n}(D_{\tau_n})\cong Kh^{i,j}(D_v)$. Resolve $D_{\tau_{n+1}}$ at any crossing in $\tau_{n+1}$ to obtain diagrams $D_v^\prime$ and $D_h^\prime$. Let $e_{n+1}=\text{neg}(D_h^\prime)-\text{neg}(D_{\tau_{n+1}})$. Since $\text{neg}(D_h^\prime)=\text{neg}(D_h)+n$ and $\text{neg}(D_{\tau_{n+1}})=\text{neg}(D)$, it follows that $e_{n+1}=e+n$. Observe that $D_v^\prime$ and $D_{\tau_{n}}$ are the same diagram, and $D_h^\prime$ and $D_h$ are diagrams for the same link. Hence the long exact sequence of Theorem \ref{les} looks like
$$0\to Kh^{i,j+n+1}(D_{\tau_{n+1}})\to Kh^{i,j+n}(D_{\tau_n})\to Kh^{i-e-n,j-3e-3n-1}(D_h)\to\cdots.$$
Since $j-3e-3n-1\leq j - 3e -1$, it follows that $Kh^{i-e-n,j-3e-3n-1}(D_h)$ is trivial. Thus $Kh^{i,j+n+1}(D_{\tau_{n+1}})\cong Kh^{i,j+n}(D_{\tau_n})\cong Kh^{i,j}(D_v)$. Therefore $Kh(D_{\tau_n})$ is $[n+\alpha_+,n+\beta_+]$-thick.

The case where $n<0$ is proved in a similar fashion using the second sequence from Theorem \ref{les}.
\end{proof}

\subsection{The Turaev genus of twisted links}

Each link diagram $D$ has an associated Turaev surface $\Sigma_D$. Let $\Gamma$ be the plane graph associated to $D$. Regard $\Gamma$ as embedded in $\mathbb{R}^2$ sitting inside $\mathbb{R}^3$. Outside the neighborhoods of the vertices of $\Gamma$ is a collection of arcs in the plane. Replace each arc by a band that is perpendicular to the plane. In the neighborhoods of the vertices, place a saddle so that the circles obtained from choosing a $0$-resolution at each crossing lie above the plane and so that the circles obtained from choosing a $1$-resolution at each crossing lie below the plane (see Figure \ref{saddle}).
\begin{figure}[h]
\includegraphics[scale=.6]{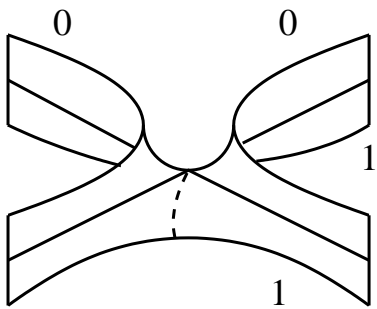}
\caption{In a neighborhood of each crossing, insert a saddle so that the boundary above the plane corresponds to the $0$ resolution and the boundary below the plane corresponds to the $1$ resolution.}
\label{saddle}
\end{figure}
The resulting surface has a boundary of disjoint circles, with circles corresponding to the all $0$-resolution above the plane and circles corresponding to the all $1$-resolution below the plane. For each boundary circle, insert a disk to obtain a closed surface $\Sigma_D$ known as the {\it Turaev surface} (cf. \cite{tur}). The genus of this surface is denoted $g(\Sigma_D)$, and can be calculated by the formula
$$g(\Sigma_D) = \frac{2-s_0(D)-s_1(D)+c(D)}{2},$$
where $c(D)$ is the number of crossings in $D$ and $s_0(D)$ and $s_1(D)$ are the number of circles appearing in the all $0$ and all $1$ resolutions of $D$ respectively.  The {\it Turaev genus} of a link is defined as
$$g_T(L) = \min \{g(\Sigma_D)~|~D~\text{is a diagram for}~L\}.$$

The Turaev genus of a link $L$ is a measure of how far $L$ is away from being alternating. Specifically, Dasbach et. al. \cite{das} prove the following proposition.
\begin{proposition}[Dasbach-Futer-Kalfagianni-Lin-Stoltzfus]
A link has Turaev genus $0$ if and only if it is alternating.
\end{proposition}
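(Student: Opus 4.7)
The plan is to prove both implications separately, both relying on the combinatorics of the state circles together with Euler's formula.

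For the forward direction, assume $L$ is alternating and let $D$ be a reduced alternating diagram for $L$. View $D$ as a $4$-valent plane graph with $c(D)$ vertices and $2c(D)$ edges, so Euler's formula for the sphere gives $c(D)+2$ faces. The key observation is that at an alternating crossing, the $0$-resolution produces an arc that locally bounds one of the two checkerboard colors, while the $1$-resolution bounds the other; because this happens consistently at every crossing of $D$, the circles in the all-$0$ and all-$1$ resolutions coincide with the boundaries of the two checkerboard regions. Hence $s_0(D)+s_1(D)$ equals the total number of faces, which is $c(D)+2$. Plugging into $g(\Sigma_D) = \tfrac{1}{2}(2-s_0(D)-s_1(D)+c(D))$ gives $g(\Sigma_D)=0$, so $g_T(L)=0$.

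For the backward direction, assume $g_T(L)=0$ and choose a diagram $D$ of $L$ realizing this, so $\Sigma_D\cong S^{2}$. The point is to use the fact, built into the construction of $\Sigma_D$ reviewed above, that the link $L$ itself sits embedded on $\Sigma_D$: each strand passes through a saddle at every crossing so that the $0$-resolution circles appear on the side of $\Sigma_D$ lying above the projection plane and the $1$-resolution circles on the side lying below. When one forgets the original projection to the plane and instead reads $L$ as a diagram on $\Sigma_D$, this local picture at each saddle is precisely the local picture of an alternating crossing relative to the two sides of the surface. In other words, $L$ projects to an alternating diagram on $\Sigma_D$. Since $\Sigma_D\cong S^{2}$, puncturing at a point disjoint from this diagram converts it into an alternating diagram of $L$ in $\mathbb{R}^{2}$, showing $L$ is alternating.

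The forward direction is essentially just Euler's formula, so the main obstacle is the backward direction: one needs to justify that the diagram of $L$ read on $\Sigma_D$ is genuinely alternating. This is where one has to unpack the saddle construction carefully and check that the two sheets of $\Sigma_D$ meeting at each crossing are on opposite sides of the strands in a way consistent around the entire diagram, which is exactly what makes the alternating pattern global rather than merely local. Once this is in hand, the conclusion that a connected alternating diagram on $S^{2}$ is the same data as an alternating diagram in the plane is immediate.
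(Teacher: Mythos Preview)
The paper does not actually prove this proposition; it is stated as a result of Dasbach--Futer--Kalfagianni--Lin--Stoltzfus and cited from \cite{das} (with the underlying idea going back to Turaev \cite{tur}). So there is no in-paper proof to compare against.

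That said, your argument is correct and is essentially the standard one found in those references. The direction ``alternating $\Rightarrow$ $g_T(L)=0$'' is exactly the Euler-characteristic count you give: for a connected alternating diagram the all-$0$ and all-$1$ state circles are the boundaries of the two checkerboard colour classes, so $s_0(D)+s_1(D)$ equals the number of faces $c(D)+2$, and the genus formula collapses to zero. (You should say ``connected'' rather than ``reduced'' here, since connectedness is what makes the face count work; reducedness is irrelevant.)

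For the direction ``$g_T(L)=0 \Rightarrow$ alternating'' you have correctly isolated the key fact: the link always projects to an \emph{alternating} diagram on its own Turaev surface, because the disks capping the $0$-state circles lie on one side and the disks capping the $1$-state circles lie on the other, giving a global checkerboard colouring of $\Sigma_D$ in which every crossing has the same local type. This is precisely the observation Turaev made and that Dasbach et al.\ record. Once you know this, $\Sigma_D\cong S^2$ immediately yields a planar alternating diagram. Your proof sketch is honest about where the work lies, and the work you describe is exactly what is needed; there is no gap.
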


Also, the Turaev genus of $L$ gives a bound on the Khovanov width of $L$. Manturov \cite{man} and Champanerkar-Kofman-Stoltzfus \cite{stoltz} prove the following inequality.
\begin{proposition}[Manturov, Champanerkar-Kofman-Stoltzfus]
\label{bound}
Let $L$ be a link. Then $$w_{Kh}(L)-2\leq g_T(L).$$ 
\end{proposition}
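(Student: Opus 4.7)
The plan is to prove the diagram-level inequality $w_{Kh}(D) - 2 \leq g(\Sigma_D)$ for every diagram $D$ of $L$ and then take the infimum over diagrams to obtain $w_{Kh}(L) - 2 \leq g_T(L)$. Fix such a $D$ with $c$ crossings, $n_+$ positive and $n_-$ negative, and let $s_0, s_1$ denote the circle counts of its all-$0$ and all-$1$ resolutions, so that $2g(\Sigma_D) = c - s_0 - s_1 + 2$.

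I would begin by parametrizing the generators of the Khovanov chain complex $CKh(D)$ via the cube of resolutions: a state $s \in \{0,1\}^c$ with $r(s) = |s|$ one-resolutions and $o(s)$ resolution circles, together with a $v_\pm$ labelling with $p$ copies of $v_+$, yields a generator in $\delta$-grading $\delta = n_+ - r(s) + 2p - o(s)$. A routine monotonicity check---each elementary cube step changes $r$ by $+1$ and $o$ by $\pm 1$---shows $o(s) - r(s) \leq s_0$ and $o(s) + r(s) \leq c + s_1$ for every state, so combined with $0 \leq p \leq o(s)$ every generator of $CKh(D)$ satisfies $n_+ - c - s_1 \leq \delta \leq n_+ + s_0$. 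This alone only yields the coarse bound $w_{Kh}(D) \leq (c + s_0 + s_1)/2 + 1$, whose excess over the target $g(\Sigma_D) + 2$ is exactly $s_0 + s_1 - 2$ units of width.

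The central, and main, step is to close this gap by showing that the homology vanishes on the topmost $s_0 - 1$ and bottommost $s_1 - 1$ of the $\delta$-lines supporting $CKh(D)$, which cuts the support of $Kh(D)$ down to a $\delta$-range of length at most $2g(\Sigma_D)+2$. Here I would exploit the alternating behavior that $D$ enjoys when drawn on the Turaev surface $\Sigma_D$: the two families of resolution disks supply a checkerboard colouring of $\Sigma_D$ with respect to which $D$ is alternating. Viewing the extremal subcomplexes of $CKh(D)$ through this structure, one identifies a filtration whose associated graded pieces are Khovanov-type complexes of genuine alternating link diagrams living on $\Sigma_D$; since alternating links have reduced Khovanov width $1$ by Lee's theorem, and hence unreduced width $2$ by Corollary~\ref{redwid}, each graded piece concentrates its homology on a single $\delta$-line. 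Summing the contributions layer by layer cancels the required $s_0 - 1$ top and $s_1 - 1$ bottom $\delta$-lines, completing the estimate $w_{Kh}(D) \leq g(\Sigma_D) + 2$.

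The principal obstacle is the construction of this filtration and the verification that each graded piece really does reduce to an alternating-type Khovanov complex; this requires the full combinatorics of the ribbon graph embedded in $\Sigma_D$, and is the content of Manturov's spectral-sequence argument \cite{man} as well as of the quasi-tree reformulation of Champanerkar-Kofman-Stoltzfus \cite{stoltz}.
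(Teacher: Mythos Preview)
The paper does not prove this proposition; it is quoted from \cite{man} and \cite{stoltz}, and the only hint given here (in Section~\ref{oddsec}) is that it ``is a consequence of the bigraded $\bb{Z}$-module structure of the spanning tree complex for Khovanov homology.''  So the benchmark is the spanning-tree argument, and your proposal should be measured against that.

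Your first paragraph is correct: the chain-level bounds $n_+-c-s_1\le\delta\le n_++s_0$ hold for every generator of $CKh(D)$, and the resulting deficit against the target is exactly $s_0+s_1-2$ diagonals.

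The gap is in your second step.  You posit a filtration of $CKh(D)$ whose graded pieces are ``Khovanov-type complexes of genuine alternating link diagrams living on $\Sigma_D$'' and then invoke Lee's theorem to force each piece onto two $\delta$-lines.  But you never say what the filtration is or what these alternating links are, and more importantly Lee's theorem and Corollary~\ref{redwid} apply to \emph{planar} alternating diagrams.  An alternating diagram on a surface of genus $g$ has no thinness property in general; indeed $D$ itself is alternating on $\Sigma_D$, and its width can be $g(\Sigma_D)+2$.  So the appeal to Lee is circular, and this is not how either \cite{man} or \cite{stoltz} argues.

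The actual mechanism, reflected in Propositions~\ref{wehrspan} and~\ref{oddspan} of this paper, is that $CKh(D)$ is homotopy equivalent to the spanning-tree complex $A=\bigoplus_{T\in\mathcal T(D)}\mathcal A[f_T]\{g_T\}$.  Each summand $\mathcal A[f_T]\{g_T\}$ occupies two adjacent $\delta$-lines $g_T-2f_T\pm1$, and one checks directly from the combinatorics of twisted unknots that the quantity $g_T-2f_T$ ranges over an interval of length $2g(\Sigma_D)$ as $T$ varies.  This yields $w_{Kh}(D)\le g(\Sigma_D)+2$ at once, with no filtration and no use of Lee.  Your coarse-bound-plus-cancellation strategy could in principle be made to work, but the cancellation has to come from collapsing the cube to the spanning-tree complex, not from any alternating-link thinness on $\Sigma_D$.
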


The following proposition is implicit in Champanerkar and Kofman \cite{cha-kof}, but not explicitly proven.
\begin{proposition}
\label{turaevtwist}
Let $D$ be a link diagram with crossing $x$, and let $\tau$ be an alternating rational tangle such that $D$ is twisted by $\tau$ at $x$. Then $g(\Sigma_{D_\tau})=g(\Sigma_D)$.
\end{proposition}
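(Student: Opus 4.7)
The plan is to use the combinatorial formula
$$g(\Sigma_D) = \frac{2 - s_0(D) - s_1(D) + c(D)}{2}$$
already recorded earlier in the section, and to prove instead the equivalent statement that $c(D) - s_0(D) - s_1(D)$ is unchanged when $D$ is replaced by $D_\tau$. Following the inductive scheme used in Theorem \ref{alt-tangle} and depicted in Figure \ref{induct}, I will build $D_\tau$ from $D$ by a sequence of elementary twists, each of which replaces a single crossing by $C(2)$ or $C(-2)$. Since this scheme can be carried out for any alternating rational tangle $\tau = C(a_1,\dots,a_m)$, it suffices to verify the claim at each elementary step.

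For the inductive step, let $D'$ be a link diagram with crossing $y$, and let $D''$ be obtained from $D'$ by twisting at $y$ by $C(2)$ or $C(-2)$. In both cases the replacement increases the number of crossings by exactly one, so $c(D'') = c(D') + 1$, and it remains to check that $s_0(D'') + s_1(D'') = s_0(D') + s_1(D') + 1$. Outside a small neighborhood of $y$ the all-$0$ and all-$1$ resolutions of $D''$ are identical to those of $D'$, so the comparison is purely local. Inside the neighborhood one compares a single smoothing of $y$ with two stacked smoothings of the two new crossings, and a direct picture in each of the four configurations (sign of $y$ crossed with choice of $C(2)$ or $C(-2)$) shows that exactly one of the two resolutions (the one whose smoothing cuts across the twist axis of the inserted tangle) acquires one additional small circle, while the other resolution is unchanged up to planar isotopy. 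Hence $s_0 + s_1$ increases by exactly one, matching the change in $c$.

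The main obstacle is the case-by-case local picture analysis in the elementary step: one must draw the four configurations and verify that in each case precisely one resolution picks up a new circle. Once this local identity is established, chaining it along the sequence of elementary twists from Figure \ref{induct} gives $c(D_\tau) - s_0(D_\tau) - s_1(D_\tau) = c(D) - s_0(D) - s_1(D)$, which immediately yields $g(\Sigma_{D_\tau}) = g(\Sigma_D)$.
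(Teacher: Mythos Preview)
Your proposal is correct and is essentially the same approach as the paper's proof: both use the formula $g(\Sigma_D)=\tfrac{1}{2}(2-s_0-s_1+c)$ and check that $c-s_0-s_1$ is unchanged. The only difference is organizational: the paper inserts the entire tangle $\tau=C(a_1,\dots,a_m)$ at once and asserts directly that the all-$0$ and all-$1$ resolutions of $D_\tau$ differ from those of $D$ by $k$ and $l$ extra circles with $k+l=a-1=c(D_\tau)-c(D)$, whereas you reach the same count inductively, one crossing at a time, via the $C(2)$/$C(-2)$ scheme of Theorem~\ref{alt-tangle}. Your local picture check (one resolution gains a small circle, the other is unchanged) is exactly the content of the paper's claim $k+l=a-1$ specialized to $a=2$; note that the ``sign of $y$'' is irrelevant here since the $0$- and $1$-smoothings depend only on the over/under data, so your four cases collapse to two.
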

\begin{proof}
Suppose $\tau=C(a_1,\dots,a_m)$, where $\text{sign}(a_i)=\text{sign}(a_j)$ for all $i$ and $j$. Let $a=\sum_{i=1}^m |a_i|$. The all 0-resolution of $D$ is the same as the all 0-resolution of $D_\tau$, except $D_\tau$ has an additional $k$ circles. Similarly, the all 1-resolution of $D$ is the same as the all 1-resolution of $D_\tau$, except $D_\tau$ has an additional $l$ circles. Since $\tau$ is alternating, it follows that $k+l=a-1$. Also, $c(D_\tau)=c(D)+a-1$. Therefore,
\begin{eqnarray*}
g(\Sigma_D)  & = & \frac{2-s_0(D)-s_1(D)+c(D)}{2} \\
& = & \frac{2 - (s_0(D_\tau) + s_1(D_\tau) -(a-1))+c(D_\tau)- (a-1)}{2}\\
& = & \frac{2-s_0(D_\tau)-s_1(D_\tau)+c(D_\tau)}{2}\\
& = & g(\Sigma_{D_\tau}).
\end{eqnarray*}
\end{proof}

In the case where $D$ is the closure of a braid, there is a particularly nice version of Proposition \ref{turaevtwist}. Let $w=w(\sigma_1,\sigma_1^{-1},\dots,\sigma_{n-1},\sigma_{n-1}^{-1}) \in B_n$ be a word in the braid group, and let $D$ be the link diagram obtained from taking the closure of $w$. Suppose $w^\prime$ is word in $B_n$ obtained by replacing $\sigma_i$ in $w$ with $\sigma_i^k$ where $k>0$ or by replacing $\sigma_i^{-1}$ in $w$ with $\sigma_i^k$ where $k<0$. Let $D^\prime$ be the link diagram obtained by taking the braid closure of $w^\prime$.
\begin{corollary}
\label{braidreduc}
Let $D$ and $D^\prime$ be link diagrams obtained from the closures of the braids $w$ and $w^\prime$ respectively. Then $g(\Sigma_D) = g(\Sigma_{D^\prime})$.
\end{corollary}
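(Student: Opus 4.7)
The plan is to reduce the corollary to Proposition~\ref{turaevtwist} by recognizing the passage from $w$ to $w'$ as a single instance of twisting a distinguished crossing by an alternating rational tangle.

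First I would take the distinguished crossing $x$ of $D$ to be the one corresponding to the letter $\sigma_i^{\pm 1}$ of $w$ that is being replaced. Every crossing in a braid-closure diagram has its local strand slopes equal to $\pm 1$, so the hypothesis required to define $D_\tau$ in Section~\ref{twistedlinks} is satisfied. Next I would observe that $\tau = C(k)$ is a one-term rational tangle, and any one-term rational tangle is alternating, since the sign-alternation condition on $C(a_1,\dots,a_m)$ is vacuous when $m=1$. So $C(k)$ is an alternating rational tangle for every nonzero $k$.

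The next step is to verify that the two constructions agree geometrically. Replacing $\sigma_i$ by $\sigma_i^k$ in $w$, with $k>0$, removes the single positive crossing $x$ between strands $i$ and $i+1$ and inserts in its place a column of $k$ positive crossings between those same two strands; this is precisely $D_\tau$ for $\tau=C(k)$, provided we position $\tau$ so that its topmost crossing sits where $x$ used to sit (which the definition of $D_\tau$ allows). Replacing $\sigma_i^{-1}$ by $\sigma_i^k$ for $k<0$ is identical after reflecting signs. Once this identification is in place, Proposition~\ref{turaevtwist} yields $g(\Sigma_D) = g(\Sigma_{D_\tau}) = g(\Sigma_{D'})$, which is the desired equality.

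I do not anticipate a real obstacle: the entire content of this corollary is contained in Proposition~\ref{turaevtwist}, and the only thing requiring any care is the bookkeeping that matches the braid-generator description of the modification $w\mapsto w'$ to the rational-tangle description of $D\mapsto D_\tau$.
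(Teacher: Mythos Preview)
Your proposal is correct and is precisely the argument the paper has in mind: the corollary is stated immediately after Proposition~\ref{turaevtwist} with no separate proof, and the only content is the observation that replacing $\sigma_i^{\pm 1}$ by $\sigma_i^k$ (with $\operatorname{sign}(k)=\pm 1$) is exactly twisting the corresponding crossing by the alternating rational tangle $C(k)$. Your bookkeeping matching the braid-word modification to the tangle-twist construction is the intended (and only) step.
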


%----------------------------------------------------------------------------------------------%
\section{Applications to 3-braids}
\label{3-braid sec}

Closed 3-braids are a rich class of links in which computation of invariants are possible. In \cite{birman}, Birman and Menasco classify the link types of closed 3-braids. Several papers (Schreier \cite{sch}, Murasugi \cite{mur}, and Garside \cite{gar}) give algorithms to determine when two 3-braids are conjugate in $B_3$. In this paper, we will be interested in Murasugi's solution to the conjugacy problem.
\subsection{Torus Links}

Let $T(p,q)$ denote the $(p,q)$ torus link.  In this subsection, we will determine the Turaev genus and Khovanov width of $T(3,q)$. Turner \cite{turn} and Sto\v{s}i\'{c} \cite{stos} give formulas for the rational Khovanov homology of $T(3,q)$. The following theorem specifies the support of $Kh(T(3,q);\bb{Q})$ for $q\geq 3$. If $q\leq -3$, one can deduce the support from this theorem and the fact that $T(3,-q)$ is the mirror of  $T(3,q)$. 
\begin{theorem}[Sto\v{s}i\'{c}, Turner] 
\label{turn} Suppose $n\geq 1$.
\begin{enumerate}
\item The group $Kh(T(3,3n);\bb{Q})$ is $[4n-3,6n-1]$-thick. Thus $w_{Kh}(T(3,3n);\bb{Q}) = n + 2.$
\item The group $Kh(T(3,3n+1);\bb{Q})$ is $[4n-1,6n+1]$-thick. Thus $w_{Kh}(T(3,3n+1);\bb{Q}) = n + 2.$
\item The group $Kh(T(3,3n+2);\bb{Q})$ is $[4n+1,6n+3]$-thick. Thus $w_{Kh}(T(3,3n+2);\bb{Q}) = n + 2.$
\end{enumerate}
\end{theorem}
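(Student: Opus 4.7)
The plan is to deduce the stated $\delta$-grading ranges directly from the explicit bigraded descriptions of $Kh(T(3,q);\bb{Q})$ provided by Sto\v{s}i\'{c} and Turner. Both authors give, for $q\geq 3$, a complete list of generators of $Kh(T(3,q);\bb{Q})$ together with their bigradings $(i,j)$. The task reduces to matching conventions with the ones used in this paper (so that $\delta=j-2i$ has the meaning of Section \ref{background}) and then reading off the smallest and largest $\delta$ appearing among these generators.

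First I would separate into the three congruence cases $q\equiv 0,1,2\pmod 3$. In each case, Sto\v{s}i\'{c}'s formula exhibits a distinguished lowest-homological generator (which also realizes $\delta_{\min}$) and a distinguished top generator (realizing $\delta_{\max}$); all intermediate generators lie on a narrow band of consecutive slope-$2$ lines between these two extremes. Writing $q=3n$, $q=3n+1$, $q=3n+2$, the extremal bigradings are linear functions of $n$, and direct subtraction yields
\begin{align*}
[\delta_{\text{min}},\delta_{\text{max}}] &= [4n-3,\,6n-1] &&\text{for } q=3n,\\
[\delta_{\text{min}},\delta_{\text{max}}] &= [4n-1,\,6n+1] &&\text{for } q=3n+1,\\
[\delta_{\text{min}},\delta_{\text{max}}] &= [4n+1,\,6n+3] &&\text{for } q=3n+2,
\end{align*}
whence $w_{Kh}(T(3,q);\bb{Q})=\tfrac{1}{2}(\delta_{\text{max}}-\delta_{\text{min}})+1=n+2$ in each case.

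The main obstacle is bookkeeping: Sto\v{s}i\'{c} and Turner use bigrading conventions whose normalizations (particularly the writhe-dependent shift in $j$) differ slightly from those in Section \ref{background}. I would pin these down once by computing $\delta_{\text{min}}$ and $\delta_{\text{max}}$ by hand for the three smallest cases $T(3,3), T(3,4), T(3,5)$ and comparing with the $n=1$ values of the three formulas; this fixes the overall shift, after which the formulas for arbitrary $n$ follow from the explicit generator lists. As a backup, if one preferred a self-contained induction, one could apply Corollary \ref{glue} to the skein triangle at a crossing of the braid $(\sigma_1\sigma_2)^q$, whose two resolutions are (up to Markov stabilization) $T(2,q)$ and $T(3,q-1)$: the former is quasi-alternating, hence $\delta$-thin by Theorem \ref{qalt}, and the latter is handled by induction. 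In that alternative route, the three congruence classes would arise from the periodic behavior of the interference term $v_{\text{min}}=h_{\text{min}}+e+1$ in Corollary \ref{glue}, which would have to be checked by comparing ranks against the Jones polynomial of $T(3,q)$ to decide the injectivity/surjectivity of the connecting map.
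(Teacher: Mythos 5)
The paper gives no internal proof of this theorem: it simply cites the explicit bigraded formulas of Sto\v{s}i\'{c} and Turner for $Kh(T(3,q);\bb{Q})$ and reads off the support, which is exactly your primary strategy. Your proposal matches the paper's approach; the alternative inductive route you sketch is not what the paper does here (though the paper does use such skein inductions for the more general $3$-braid computations later).
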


The following lemma gives several normal forms for braids in $B_3$ whose closures are torus links. We will use these normal forms to compute the Turaev genus of a $(3,q)$ torus link as well as the Turaev genus of many closed 3-braids.
\begin{lemma}
\label{norm}
Let $B_3$ be the braid group on three strands. Then for any $n>1$, we have
\begin{eqnarray*}
(\sigma_1\sigma_2)^3 & = & \sigma_1^2\sigma_2\sigma_1^2\sigma_2,\\
(\sigma_1\sigma_2)^4 & = & \sigma_1^2\sigma_2\sigma_1^3\sigma_2\sigma_1,\\
(\sigma_1\sigma_2)^5 & = & \sigma_1^3\sigma_2\sigma_1^3\sigma_2\sigma_1^2,\\
(\sigma_1\sigma_2)^{3n} & = & \sigma_1^{3}\sigma_2
\underbrace{\sigma_1^4\sigma_2\cdots\sigma_1^4\sigma_2}_{n-2}\sigma_1^3\sigma_2\sigma_1^{n+1}\sigma_2,\\
(\sigma_1\sigma_2)^{3n+1} & = & \sigma_1^{3}\sigma_2
\underbrace{\sigma_1^4\sigma_2\cdots\sigma_1^4\sigma_2}_{n-2}\sigma_1^3\sigma_2\sigma_1^{n+2}\sigma_2\sigma_1,\text{ and}\\
(\sigma_1\sigma_2)^{3n+2} & =  & \sigma_1^{3}\sigma_2
\underbrace{\sigma_1^4\sigma_2\cdots\sigma_1^4\sigma_2}_{n-1}\sigma_1^3\sigma_2\sigma_1^{n+1}
\end{eqnarray*}
\end{lemma}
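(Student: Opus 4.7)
The plan is to derive all six identities from the single braid relation $\sigma_1\sigma_2\sigma_1 = \sigma_2\sigma_1\sigma_2$. First I would verify the three explicit identities for $k = 3,4,5$ directly. For $(\sigma_1\sigma_2)^3$, apply the braid relation once in the middle:
$$(\sigma_1\sigma_2)^3 = \sigma_1(\sigma_2\sigma_1\sigma_2)\sigma_1\sigma_2 = \sigma_1(\sigma_1\sigma_2\sigma_1)\sigma_1\sigma_2 = \sigma_1^2\sigma_2\sigma_1^2\sigma_2.$$
Then obtain $(\sigma_1\sigma_2)^4$ and $(\sigma_1\sigma_2)^5$ by right-multiplying successively by $\sigma_1\sigma_2$ and using the rewrite $\sigma_1^a\sigma_2\sigma_1\sigma_2 = \sigma_1^a(\sigma_2\sigma_1\sigma_2) = \sigma_1^a(\sigma_1\sigma_2\sigma_1) = \sigma_1^{a+1}\sigma_2\sigma_1$.

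For the three infinite families I would do simultaneous induction on $n \geq 2$, since the three normal forms differ by a single factor of $\sigma_1\sigma_2$. Within one cycle there are three right-multiplications by $\sigma_1\sigma_2$ to perform. The step from the $3n$ formula to the $3n+1$ formula is straightforward: the tail $\sigma_1^{n+1}\sigma_2 \cdot \sigma_1\sigma_2 = \sigma_1^{n+1}(\sigma_2\sigma_1\sigma_2) = \sigma_1^{n+1}(\sigma_1\sigma_2\sigma_1) = \sigma_1^{n+2}\sigma_2\sigma_1$ matches the target form. The step from the $3n+2$ formula to the $3(n+1)$ formula is even simpler: appending $\sigma_1\sigma_2$ to the tail $\sigma_1^{n+1}$ produces $\sigma_1^{n+2}\sigma_2$, and shifting the index $n \mapsto n+1$ confirms the resulting word has the required shape. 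The base case $n=2$ (where the middle $\sigma_1^4\sigma_2$ block is empty or singleton) is handled by carrying out these same three multiplications starting from the $k=5$ identity.

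The principal obstacle is the step from the $3n+1$ formula to the $3n+2$ formula, where the exponent must migrate from the middle of the word to the end. Right-multiplying by $\sigma_1\sigma_2$ converts the tail $\sigma_1^{n+2}\sigma_2\sigma_1$ into $\sigma_1^{n+2}\sigma_2\sigma_1^2\sigma_2$, and my plan is to rewrite this using the auxiliary identity
$$\sigma_1^a\sigma_2\sigma_1^2\sigma_2 = \sigma_1^{a-1}\sigma_2\sigma_1^2\sigma_2\sigma_1 \quad (a \geq 1),$$
which itself follows from the braid relation: $\sigma_1^a\sigma_2\sigma_1^2 = \sigma_1^{a-1}(\sigma_1\sigma_2\sigma_1)\sigma_1 = \sigma_1^{a-1}\sigma_2\sigma_1\sigma_2\sigma_1$, and then $\sigma_2\sigma_1\sigma_2\sigma_1\sigma_2 = \sigma_2\sigma_1(\sigma_1\sigma_2\sigma_1) = \sigma_2\sigma_1^2\sigma_2\sigma_1$. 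Iterating this identity $n+1$ times transports the block of $\sigma_1$'s from the front of the tail to the back, giving $\sigma_1^{n+2}\sigma_2\sigma_1^2\sigma_2 = \sigma_1\sigma_2\sigma_1^2\sigma_2\sigma_1^{n+1}$.

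Finally, prepending the $\sigma_1^3\sigma_2$ that sits immediately before this tail in the inductive hypothesis and simplifying $\sigma_1^3\sigma_2\sigma_1\sigma_2 = \sigma_1^4\sigma_2\sigma_1$ yields the trailing block $\sigma_1^4\sigma_2\sigma_1^3\sigma_2\sigma_1^{n+1}$. The new leading $\sigma_1^4\sigma_2$ then fuses with the $(\sigma_1^4\sigma_2)^{n-2}$ block inherited from the inductive hypothesis to produce $(\sigma_1^4\sigma_2)^{n-1}$, which is precisely the target shape for $(\sigma_1\sigma_2)^{3n+2}$. This completes the cycle, and the induction proceeds to $n+1$.
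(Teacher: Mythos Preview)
Your proposal is correct and follows essentially the same strategy as the paper: both are direct braid-relation manipulations, and both establish the $3n$, $3n+1$, $3n+2$ families by induction. The organization differs slightly. The paper inducts only on the $3n$ identity, multiplying by $(\sigma_1\sigma_2)^3$ in one step and using the pair of ``slide'' relations $\sigma_1^k\sigma_2\sigma_1=\sigma_2\sigma_1\sigma_2^k$ and $\sigma_1\sigma_2\sigma_1^k=\sigma_2^k\sigma_1\sigma_2$ to reorganize the exponents; it then derives the $3n+1$ and $3n+2$ identities separately from the established $3n$ formula. You instead cycle through all three formulas at each stage, and your key tool is the transport identity $\sigma_1^a\sigma_2\sigma_1^2\sigma_2=\sigma_1^{a-1}\sigma_2\sigma_1^2\sigma_2\sigma_1$, iterated to shift a power of $\sigma_1$ from one end of the tail to the other. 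These are equivalent devices; the paper's slide relations make the $3n\to 3(n+1)$ step somewhat shorter, while your transport lemma handles the $3n+1\to 3n+2$ step cleanly.

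One small imprecision: your sentence about obtaining $(\sigma_1\sigma_2)^5$ ``using the rewrite $\sigma_1^a\sigma_2\sigma_1\sigma_2=\sigma_1^{a+1}\sigma_2\sigma_1$'' is not quite enough on its own, since right-multiplying $(\sigma_1\sigma_2)^4=\sigma_1^2\sigma_2\sigma_1^3\sigma_2\sigma_1$ by $\sigma_1\sigma_2$ produces the tail $\sigma_2\sigma_1^2\sigma_2$ rather than $\sigma_2\sigma_1\sigma_2$. You need one application of your transport identity (or equivalently two uses of the braid relation) here as well. This is easily fixed and does not affect the rest of the argument.
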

\begin{proof}
Observe
\begin{eqnarray*}
(\sigma_1\sigma_2)^3 & = & \sigma_1\sigma_2\sigma_1\sigma_2\sigma_1\sigma_2 \\
& = & \sigma_1^2\sigma_2\sigma_1^2\sigma_2,\\
(\sigma_1\sigma_2)^4 & = & \sigma_1^2\sigma_2\sigma_1^2\sigma_2\sigma_1\sigma_2\\
& = & \sigma_1\sigma_2^2\sigma_1\sigma_2^3\sigma_1,\text{ and}\\
(\sigma_1\sigma_2)^5 & = & \sigma_1\sigma_2\sigma_1\sigma_2\sigma_1\sigma_2\sigma_1\sigma_2\sigma_1\sigma_2\\
& = & \sigma_1\sigma_2\sigma_1\sigma_2^2\sigma_1\sigma_2\sigma_1\sigma_2\sigma_1\\
& = & \sigma_1^2\sigma_2\sigma_1\sigma_2\sigma_1^2\sigma_2\sigma_1^2\\
& = & \sigma_1^3\sigma_2\sigma_1^3\sigma_2\sigma_1^2.
\end{eqnarray*}

The braid relation directly implies the following two relations:
\begin{eqnarray*}
\sigma_1^k\sigma_2\sigma_1 & = & \sigma_2\sigma_1\sigma_2^k,~\text{and}\\
\sigma_1\sigma_2\sigma_1^k & = & \sigma_2^k\sigma_1\sigma_2,
\end{eqnarray*}
for $k>0$. These relations will be used to prove the last three equations in the lemma.

For $n>1$, we prove that
$$(\sigma_1\sigma_2)^{3n} =  \sigma_1^{3}\sigma_2
\underbrace{\sigma_1^4\sigma_2\cdots\sigma_1^4\sigma_2}_{n-2}\sigma_1^3\sigma_2\sigma_1^{n+1}\sigma_2$$
by induction. Let $n=2$. Then
\begin{eqnarray*}
(\sigma_1\sigma_2)^6 & = & \sigma_1\sigma_2\sigma_1\sigma_2\sigma_1\sigma_2\sigma_1\sigma_2\sigma_1\sigma_2\sigma_1\sigma_2\\
& = & \sigma_1^2\sigma_2\sigma_1\sigma_2\sigma_1\sigma_2\sigma_1\sigma_2\sigma_1^2\sigma_2\\
& = & \sigma_1^3\sigma_2\sigma_1^3\sigma_2\sigma_1^3\sigma_2.
\end{eqnarray*}

Suppose, by way of induction, that
$$(\sigma_1\sigma_2)^{3n}=\sigma_1^{3}\sigma_2
\underbrace{\sigma_1^4\sigma_2\cdots\sigma_1^4\sigma_2}_{n-2}\sigma_1^3\sigma_2\sigma_1^{n+1}\sigma_2.$$
Then
\begin{eqnarray*}
(\sigma_1\sigma_2)^{3(n+1)} & = & \sigma_1^{3}\sigma_2
\underbrace{\sigma_1^4\sigma_2\cdots\sigma_1^4\sigma_2}_{n-2}\sigma_1^3\sigma_2\sigma_1^{n+1}\sigma_2\sigma_1\sigma_2\sigma_1\sigma_2\sigma_1\sigma_2\\
& = & \sigma_1^{3}\sigma_2
\underbrace{\sigma_1^4\sigma_2\cdots\sigma_1^4\sigma_2}_{n-2}\sigma_1^3\sigma_2\sigma_1^{n+1}\sigma_2\sigma_1\sigma_2^2\sigma_1\sigma_2^2\\
& = & \sigma_1^{3}\sigma_2
\underbrace{\sigma_1^4\sigma_2\cdots\sigma_1^4\sigma_2}_{n-2}\sigma_1^3\sigma_2\sigma_1^{n+3}\sigma_2\sigma_1^2\sigma_2^2\\
& = & \sigma_1^{3}\sigma_2
\underbrace{\sigma_1^4\sigma_2\cdots\sigma_1^4\sigma_2}_{n-2}\sigma_1^3\sigma_2\sigma_1\sigma_2\sigma_1\sigma_2^{n+2}\sigma_1\sigma_2^2\\
& = & \sigma_1^{3}\sigma_2
\underbrace{\sigma_1^4\sigma_2\cdots\sigma_1^4\sigma_2}_{n-2}\sigma_1^3\sigma_2\sigma_1\sigma_2\sigma_1^2\sigma_2\sigma_1^{n+2}\sigma_2\\
& = & \sigma_1^{3}\sigma_2
\underbrace{\sigma_1^4\sigma_2\cdots\sigma_1^4\sigma_2}_{n-2}\sigma_1^4\sigma_2\sigma_1^3\sigma_2\sigma_1^{n+2}\sigma_2.
\end{eqnarray*}
Hence, for all $n>1$,
\begin{equation}
\label{fulltwist}
(\sigma_1\sigma_2)^{3n} =  \sigma_1^{3}\sigma_2
\underbrace{\sigma_1^4\sigma_2\cdots\sigma_1^4\sigma_2}_{n-2}\sigma_1^3\sigma_2\sigma_1^{n+1}\sigma_2.
\end{equation}

Equation \ref{fulltwist} implies
\begin{eqnarray*}
(\sigma_1\sigma_2)^{3n+1} & = & \sigma_1^{3}\sigma_2
\underbrace{\sigma_1^4\sigma_2\cdots\sigma_1^4\sigma_2}_{n-2}\sigma_1^3\sigma_2\sigma_1^{n+1}\sigma_2\sigma_1\sigma_2 \\
& = & \sigma_1^{3}\sigma_2
\underbrace{\sigma_1^4\sigma_2\cdots\sigma_1^4\sigma_2}_{n-2}\sigma_1^3\sigma_2\sigma_1^{n+2}\sigma_2\sigma_1.
\end{eqnarray*}
Furthermore,
\begin{eqnarray*}
(\sigma_1\sigma_2)^{3n+2} & = & \sigma_1^{3}\sigma_2
\underbrace{\sigma_1^4\sigma_2\cdots\sigma_1^4\sigma_2}_{n-2}\sigma_1^3\sigma_2\sigma_1^{n+2}\sigma_2\sigma_1\sigma_1\sigma_2\\
& = &  \sigma_1^{3}\sigma_2
\underbrace{\sigma_1^4\sigma_2\cdots\sigma_1^4\sigma_2}_{n-2}\sigma_1^3\sigma_2\sigma_1\sigma_2\sigma_1\sigma_2^{n+1}\sigma_1\sigma_2\\
& = & \sigma_1^{3}\sigma_2
\underbrace{\sigma_1^4\sigma_2\cdots\sigma_1^4\sigma_2}_{n-2}\sigma_1^3\sigma_2\sigma_1\sigma_2\sigma_1^2\sigma_2\sigma_1^{n+1}\\
& = & \sigma_1^{3}\sigma_2
\underbrace{\sigma_1^4\sigma_2\cdots\sigma_1^4\sigma_2}_{n-2}\sigma_1^4\sigma_2\sigma_1^3\sigma_2\sigma_1^{n+1}.
\end{eqnarray*}
\end{proof}

Abe and Kishimoto \cite{abe} have independently calculated the Turaev genus for the (3,q)-torus links. We give diagrams in closed braid form that minimize Turaev genus, while they have a different approach.
\begin{proposition}
\label{torgen}
Suppose $q> 0$. The Turaev genus of $T(3,q)$ and $T(3,-q)$ is $\lfloor q/3 \rfloor$.
\end{proposition}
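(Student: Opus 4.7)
The plan is to obtain matching upper and lower bounds of $\lfloor q/3 \rfloor$. Since $T(3,-q)$ is the mirror of $T(3,q)$ and the Turaev genus is a mirror invariant (the roles of $s_0$ and $s_1$ simply swap in the genus formula), it suffices to treat $q>0$.

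For the lower bound, I would combine Proposition \ref{bound} with Theorem \ref{turn}. Because $Kh(L;\mathbb{Q})$ is obtained from $Kh(L)$ by killing torsion, its support is contained in that of $Kh(L)$, giving $w_{Kh}(T(3,q)) \geq w_{Kh}(T(3,q);\mathbb{Q}) = \lfloor q/3 \rfloor + 2$ for $q \geq 3$. Hence $g_T(T(3,q)) \geq \lfloor q/3 \rfloor$. For $q\in\{1,2\}$, the links $T(3,1)$ (unknot) and $T(3,2)$ (trefoil) are alternating, so $g_T=0=\lfloor q/3 \rfloor$.

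For the upper bound, the idea is to exhibit a single diagram $D$ of $T(3,q)$ with $g(\Sigma_D) = \lfloor q/3 \rfloor$. Using Lemma \ref{norm}, rewrite $(\sigma_1\sigma_2)^q$ in its normal form featuring high powers of $\sigma_1$, and then apply Corollary \ref{braidreduc} to collapse each maximal block $\sigma_1^k$ ($k>0$) down to a single $\sigma_1$ without changing the Turaev surface genus of the closed braid diagram. Counting the $\sigma_2$'s in the normal form, the collapsed word is $(\sigma_1\sigma_2)^{n+1}$ when $q=3n$ and $(\sigma_1\sigma_2)^{n+1}\sigma_1$ when $q=3n+1$ or $q=3n+2$, where $n=\lfloor q/3 \rfloor$. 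In both reduced diagrams positivity forces $s_0 = 3$, so by $g(\Sigma_D) = (c(D) - s_0(D) - s_1(D) + 2)/2$ it remains only to determine $s_1$.

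The key computational claim is that $s_1 = 1$ for the closure of $(\sigma_1\sigma_2)^{n+1}$, and $s_1 = 2$ for the closure of $(\sigma_1\sigma_2)^{n+1}\sigma_1$. Plugging these into the genus formula (with $c=2(n+1)$ and $c=2n+3$ respectively) yields $g(\Sigma_D)=n$ in both cases, matching the lower bound. I expect this circle count to be the main obstacle. The clean approach is to label the vertical segments of each strand of the closed braid, pair them through the cup/cap arcs at each crossing box in the $1$-resolution, and trace. A small induction on $n$ should work: the trace for the $(\sigma_1\sigma_2)^{n+1}$ case visits every segment of all three strands in a single closed cycle, and appending another $\sigma_1\sigma_2$ block inserts the two new cups and caps into that cycle without severing it; for the second case, the extra trailing $\sigma_1$ contributes a short loop using only the top two strands via the final cup/cap pair and the braid closure of strand $1$, splitting the previous single circle into exactly two.
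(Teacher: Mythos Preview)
Your proposal is correct and follows essentially the same strategy as the paper: the lower bound via Proposition~\ref{bound} and Theorem~\ref{turn}, the upper bound by collapsing the normal forms of Lemma~\ref{norm} with Corollary~\ref{braidreduc} and then evaluating the Turaev-surface genus formula on the reduced closed-braid diagram, and the mirror observation for $T(3,-q)$. The only difference is cosmetic: the paper merges the trailing $\sigma_1^{a_{s+1}}$ into the leading $\sigma_1^{a_1}$ (using that the closed braid is cyclic) so that every case reduces to the single diagram $(\sigma_1\sigma_2)^{n+1}$, whereas you keep a second case $(\sigma_1\sigma_2)^{n+1}\sigma_1$ and compute its $s_1$ separately; you also treat $q\in\{1,2\}$ explicitly, which the paper leaves implicit.
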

\begin{proof}
Let $D$ be the diagram obtained by taking the closure of the normal form for $(\sigma_1\sigma_2)^q$ given in Lemma \ref{norm}. Thus $D$ is a diagram for $T(3,q)$ and is the closure of a braid in the form
$$\sigma_1^{a_1}\sigma_2^{b_1}\cdots\sigma_1^{a_s}\sigma_2^{b_s}\sigma_1^{a_{s+1}},$$
where $s=\lfloor q/3 \rfloor +1$, both $a_i>0$ and $b_i>0$ for all $1\leq i \leq s$, and $a_{s+1}\geq 0$.
Let $D^\prime$ be the diagram obtained by taking the closure of the braid $(\sigma_1\sigma_2)^{s}$. Corollary \ref{braidreduc} implies that $g(\Sigma_D)=g(\Sigma_{D^\prime})$.
Since $c(D^\prime) = 2\lfloor q/3\rfloor +2, s_0(D^\prime)=3$ and $s_1(D^\prime)=1$, it follows that $g(\Sigma_{D^\prime})=\lfloor q/3 \rfloor$. 
Proposition \ref{bound} and Theorem \ref{turn} imply that the Turaev genus of $T(3,q)$ is greater than or equal to $\lfloor q/3 \rfloor$. Therefore, $g_T(T(3,q)) = \lfloor q/3 \rfloor$. The genera of the Turaev surfaces for a diagram and its mirror are equal, and hence $g_T(T(3,-q))=\lfloor q/3 \rfloor$.
\end{proof}
The next corollary follows directly from Theorem \ref{turn}, Proposition \ref{torgen},  and Proposition \ref{bound}.
\begin{corollary}
\label{zcoeff}
Suppose $n\geq1$.
\begin{enumerate}
\item The group $Kh(T(3,3n))$ is $[4n-3,6n-1]$-thick and the group $Kh(T(3,-3n))$ is\\ $[-6n+1,-4n+3]$-thick. Therefore 
$w_{Kh}(T(3,3n)) = w_{Kh}(T(3,-3n))=n + 2.$
\item The group $Kh(T(3,3n+1))$ is $[4n-1,6n+1]$-thick and the group $Kh(T(3,-3n-1)$ is $[-6n-1,-4n+1]$-thick. Therefore 
$w_{Kh}(T(3,3n+1)) =w_{Kh}(T(3,-3n-1)= n + 2.$
\item The group $Kh(T(3,3n+2))$ is $[4n+1,6n+3]$-thick and the group $Kh(T(3,-3n-2))$ is $[-6n-3,-4n-1]$-thick. Therefore
$w_{Kh}(T(3,3n+2)) =w_{Kh}(T(3,-3n-2))= n + 2.$
\end{enumerate}
\end{corollary}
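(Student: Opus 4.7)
The plan is to sandwich the integral Khovanov width between the rational width (which is already known by Theorem~\ref{turn}) and the Turaev-genus upper bound (Proposition~\ref{bound}), and then observe that equality of widths forces the integral support to coincide with the rational support.

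First, since $Kh(L;\mathbb{Q}) = Kh(L)\otimes \mathbb{Q}$, any $\delta$-grading in which $Kh(L;\mathbb{Q})$ is nontrivial is also a $\delta$-grading in which $Kh(L)$ is nontrivial. Consequently, $w_{Kh}(L)\geq w_{Kh}(L;\mathbb{Q})$, and both $\delta_{\text{min}}$ and $\delta_{\text{max}}$ for $Kh(L)$ lie outside the open interval cut out by the rational extremes (more precisely, the integral thick interval contains the rational one). Applied to $L = T(3,3n)$, $T(3,3n+1)$, and $T(3,3n+2)$, Theorem~\ref{turn} gives the lower bounds $w_{Kh}\geq n+2$ in all three cases.

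Next, I would combine Proposition~\ref{bound} with Proposition~\ref{torgen}. Since $g_T(T(3,q)) = \lfloor q/3 \rfloor$ for $q > 0$, Proposition~\ref{bound} yields
\[
w_{Kh}(T(3,q))\leq \lfloor q/3\rfloor + 2,
\]
which in each of the three residue classes modulo $3$ equals $n+2$. Thus the lower and upper bounds on the width agree, so $w_{Kh}(T(3,q)) = n+2$ in each case. Because the integral thick interval contains the rational one and the two intervals now have the same length $2(n+2)-2$, they must be equal; this pins down $[\delta_{\text{min}},\delta_{\text{max}}]$ exactly as claimed for the positive torus links.

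Finally, for the mirror $T(3,-q)$, I would apply the mirror parts of Proposition~\ref{kholink}: the rational statement $Kh^{i,j}(\overline{L};\mathbb{Q}) \cong Kh^{-i,-j}(L;\mathbb{Q})$ and the torsion statement $\text{Tor}(Kh^{i,j}(\overline{L})) \cong \text{Tor}(Kh^{1-i,-j}(L))$ together show that the $\delta$-support of $Kh(\overline{L})$ is the negation of the $\delta$-support of $Kh(L)$. This immediately converts the interval $[4n-3,6n-1]$ for $T(3,3n)$ into $[-6n+1,-4n+3]$ for $T(3,-3n)$, and analogously in the other two residue classes. There is no real obstacle here; the only mild point to watch is that the torsion-mirror isomorphism shifts the homological grading by $1$ but preserves the $\delta$-grading up to sign (since $\delta = j-2i$ transforms to $-j-2(1-i) = -\delta - 2$ shifted back correctly once one accounts for the reduced versus unreduced indexing), so the $\delta$-support of the mirror is truly the negation of the $\delta$-support of the original.
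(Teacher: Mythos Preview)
Your overall strategy is exactly the paper's: sandwich the integral width between the rational width from Theorem~\ref{turn} and the Turaev-genus upper bound from Propositions~\ref{torgen} and~\ref{bound}. For the positive torus links your argument is clean and correct.

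The mirror step, however, has a real slip. Under the torsion isomorphism $\text{Tor}(Kh^{i,j}(\overline{L}))\cong\text{Tor}(Kh^{1-i,-j}(L))$, the $\delta$-grading $j-2i$ on the mirror side corresponds to $-j-2(1-i)=-\delta-2$ on the original side, as you compute; but your claim that this is ``shifted back correctly once one accounts for the reduced versus unreduced indexing'' is not right---there is no such correction, and a priori torsion in $Kh(L)$ at $\delta=\delta_{\max}$ could produce torsion in $Kh(\overline{L})$ at $\delta=-\delta_{\max}-2$, outside the negated interval. So the torsion part of Proposition~\ref{kholink} alone does not show the $\delta$-support of the mirror is the exact negation.

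The fix is simply to reuse your sandwich argument on the mirror directly. Proposition~\ref{torgen} already gives $g_T(T(3,-q))=\lfloor q/3\rfloor$, so Proposition~\ref{bound} bounds $w_{Kh}(T(3,-q))\leq n+2$; the rational mirror isomorphism (which \emph{does} negate $\delta$ exactly) gives the rational support of $Kh(T(3,-q))$ as the negated interval with width $n+2$; and then the same containment-plus-equal-length argument pins down the integral support. This is presumably what the paper intends by citing all three inputs without further comment.
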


\subsection{Khovanov width of 3-braids}

In this subsection, we determine the Khovanov width of closed 3-braids based upon Murasugi's classification of closed 3-braids up to conjugation. In \cite{mur}, Murasugi proves the following:
\begin{theorem}[Murasugi]
\label{three}
Let $w\in B_3$ be a braid on three strands, and let $h=(\sigma_1\sigma_2)^3$ be a full twist. Let $n\in\mathbb{Z}$. Then $w$ is conjugate to exactly one of the following:
\begin{enumerate}
\item $ h^n\sigma_1^{p_1}\sigma_2^{-q_1}\cdots\sigma_1^{p_s}\sigma_2^{-q_s}$ where $p_i,q_i$ and $s$ are positive integers.
\item $h^n\sigma_2^m$ where $m\in\mathbb{Z}$.
\item $h^n\sigma_1^m\sigma_2^{-1},$ where $m\in\{-1,-2,-3\}$.
\end{enumerate}
\end{theorem}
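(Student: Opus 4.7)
The plan is to reduce the classification to conjugacy in the quotient $B_3/Z(B_3)$ and then lift back. Since $h=(\sigma_1\sigma_2)^3$ generates the center of $B_3$, every braid $w\in B_3$ admits a factorization $w=h^n\cdot w_0$ with $w_0$ determined modulo the center, and conjugation preserves this factorization because $h$ is central. So classification of conjugacy classes in $B_3$ amounts to classification of conjugacy classes in $B_3/Z(B_3)$, together with the integer $n$ read off from the exponent sum.

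First I would set up the isomorphism $B_3/Z(B_3)\cong\mathbb{Z}/2*\mathbb{Z}/3$ via $a=\Delta=\sigma_1\sigma_2\sigma_1$ (order $2$ modulo center) and $b=\sigma_1\sigma_2$ (order $3$ modulo center), verifying $a^2=b^3=h$ and that this is a presentation. The conjugacy theory for free products of finite cyclic groups is standard: every conjugacy class has a representative that is either cyclically reduced of the form $ab^{\epsilon_1}ab^{\epsilon_2}\cdots ab^{\epsilon_k}$ with $\epsilon_i\in\{1,2\}$ and $k\ge 1$ (the infinite-order generic case), or one of finitely many short exceptional representatives (the identity, $a$, $b$, $b^2$), and such a cyclically reduced form is unique up to cyclic permutation of the $\epsilon_i$.

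Next I would translate each conjugacy class back to a braid word. For the generic case, I would convert an alternating pattern in $a,b,b^{-1}$ into an alternating pattern in $\sigma_1,\sigma_2$ with mixed signs. The key identities are the braid relation, $\Delta\sigma_i\Delta^{-1}=\sigma_{3-i}$, and the repeated identity $\sigma_1^{p}\sigma_2^{-1}=h^{?}\cdot(\text{word})$ obtained from the normal forms already derived in Lemma~\ref{norm}. Absorbing all excess central factors into $h^n$ and using cyclic conjugation to line up the alternation produces exactly the shape $h^n\sigma_1^{p_1}\sigma_2^{-q_1}\cdots\sigma_1^{p_s}\sigma_2^{-q_s}$ of case~(1). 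The exceptional short classes in $B_3/Z(B_3)$ correspond, after a choice of representative, either to pure powers of a single generator (yielding case~(2) after conjugating $\sigma_1^m$ to $\sigma_2^m$ via $\Delta$) or to the three sporadic small words $\sigma_1^{-1}\sigma_2^{-1}$, $\sigma_1^{-2}\sigma_2^{-1}$, $\sigma_1^{-3}\sigma_2^{-1}$ which give case~(3).

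The main obstacle is uniqueness, i.e.\ showing that no braid is conjugate to two different entries on the list and that the parameters within each case are determined. For this I would use two conjugacy invariants: the exponent sum $e(w)\in\mathbb{Z}$, which pins down $n$ once the non-central factor is fixed, and the image in $PSL(2,\mathbb{Z})\cong B_3/Z(B_3)$, whose conjugacy class is unique by the free-product theory above. Within case~(1), uniqueness of $s$ and of the tuple $(p_1,q_1,\dots,p_s,q_s)$ up to cyclic permutation follows from the uniqueness of the cyclically reduced form $(ab^{\epsilon_i})$. Mutual exclusivity of cases~(1)--(3) then drops out because their images in $PSL(2,\mathbb{Z})$ have different cyclically reduced lengths (positive, zero, and one respectively), so no two can coincide.
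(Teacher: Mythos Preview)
The paper does not prove this theorem at all; it is quoted as a result of Murasugi and cited to \cite{mur} without argument. So there is no ``paper's own proof'' to compare against, and your proposal is attempting something the paper deliberately outsources.

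That said, your outline via $B_3/Z(B_3)\cong\mathbb{Z}/2*\mathbb{Z}/3\cong PSL(2,\mathbb{Z})$ is the right framework and is close to how Murasugi's classification is usually re-derived. The existence half is essentially correct: every conjugacy class in the free product is represented by the identity, a torsion letter $a$, $b$, $b^2$, or a cyclically reduced alternating word $ab^{\epsilon_1}\cdots ab^{\epsilon_k}$, and with $\sigma_1\equiv b^2a$, $\sigma_2^{-1}\equiv ba$ one does recover the three Murasugi shapes after absorbing central factors.

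The gap is in your mutual-exclusivity argument. It is not true that cases (1), (2), (3) have cyclically reduced lengths ``positive, zero, and one respectively.'' In the quotient, $\sigma_2^m$ with $m\neq 0$ is the hyperbolic word $(ab^2)^m$ (or its inverse), which has positive cyclically reduced length just like case~(1); and the three words in case~(3) reduce (as the paper's own manipulations in the proof of Proposition~\ref{wid3} show) to the torsion classes $b^2$, $a$, $b$, not to length-one hyperbolic words. The correct trichotomy is: case~(3) $\leftrightarrow$ nontrivial torsion; case~(2) with $m=0$ $\leftrightarrow$ identity; case~(2) with $m\neq 0$ $\leftrightarrow$ cyclically reduced hyperbolic words with all $\epsilon_i$ equal; case~(1) $\leftrightarrow$ cyclically reduced hyperbolic words in which both values $\epsilon_i=1,2$ occur. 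Uniqueness within case~(1) then requires the observation that the sequence $(\epsilon_i)$ is determined up to cyclic rotation, and that the blocks of consecutive $2$'s and $1$'s recover $(p_1,q_1,\dots,p_s,q_s)$ up to cyclic rotation. Your sketch does not separate cases~(1) and~(2), and the stated reason for separating~(3) is off; once you fix this bookkeeping the argument goes through.
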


Let $L$ be a closed 3-braid. Theorem \ref{three} says, in effect, that $L$ is the closure of a braid of the form $h^n A$. For $n\neq 0$, we say that $L$ has {\it cancellation} if the braid word for $A$ contains a $\sigma_i^\varepsilon$ for $i=1,2$ where $\text{sign}(\varepsilon)\neq\text{sign}(n)$. Besides two infinite family of braids, we prove that $w_{Kh}(L)=|n|+2$ if there is no cancellation and $w_{Kh}(L)=|n|+1$ if there is cancellation.

The following several propositions establish the support of $Kh(L)$. The proofs require the computation of Khovanov homology for a few specific links. We represent the rational Khovanov homology as a Poincare polynomial $P(L)$, a Laurent polynomial in the variables $q$ and $t$ such that the coefficient of $q^i t^j$ is the rank of $Kh^{i,j}(L;\bb{Q})$. These computations were taken from KnotInfo \cite{knotinfo}.

\begin{proposition}
\label{case1lem}
Suppose $n>0$ and $k\geq 0$. Let $D$ be the closure of the braid $(\sigma_1\sigma_2)^{3n}\sigma_1^k\sigma_2^{-1}$, and let $D^\prime$ be the closure of $(\sigma_1\sigma_2)^{-3n}\sigma_1\sigma_2^{-k}$. Then $Kh(D)$ is $[4n+k-2,6n+k-2]$-thick and $Kh(D^\prime)$ is $[-6n-k+2,-4n-k+2]$-thick.
\end{proposition}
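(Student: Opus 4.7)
The plan is to prove the support of $Kh(D)$ by induction on $k\ge 0$, treating $k=0$ as a base case handled directly and using Proposition~\ref{band} to extend to all $k\ge 1$. The statement for $D'$ follows by the analogous induction, or by observing that $D'$ is ambient isotopic to the mirror $\overline{D}$: applying the involutive automorphism $\sigma_1\leftrightarrow\sigma_2$ of $B_3$ (which preserves link type), together with the centrality of $(\sigma_1\sigma_2)^{3n}$ and cyclic conjugation in the braid closure, one checks that the closure of $(\sigma_1\sigma_2)^{-3n}\sigma_1\sigma_2^{-k}$ agrees with $\overline{D}$, and then the mirror formula in Proposition~\ref{kholink} flips the support.

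For the base case $k=0$, the diagram $D$ is the closure of $(\sigma_1\sigma_2)^{3n}\sigma_2^{-1}$. I would apply the oriented skein long exact sequence (packaged as Corollary~\ref{glue}) at the unique $\sigma_2^{-1}$ crossing. The vertical resolution $D_v$ is the closure of $(\sigma_1\sigma_2)^{3n}=T(3,3n)$, whose support $[4n-3,6n-1]$ is supplied by Corollary~\ref{zcoeff}. The horizontal resolution $D_h$ is obtained by saddling $\sigma_2^{-1}$; using the braid relations and Markov destabilization, $D_h$ is isotopic to an alternating link (essentially a $(2,m)$-torus link for a suitable $m$), hence of Khovanov width $2$ with support determined by its signature. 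Computing $e=\mathrm{neg}(D_h)-\mathrm{neg}(D_+)$ and plugging the two supports into Corollary~\ref{glue} assembles the interval $[4n-2,6n-2]$ for $Kh(D)$.

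For the inductive step, I would regard $D$ as the result of twisting the unique $\sigma_1$ crossing in the $k=1$ diagram by $\tau_k=C(k)$ and invoke Proposition~\ref{band}(1). The two resolutions of this $\sigma_1$ crossing are the $k=0$ diagram --- whose support $[4n-2,6n-2]$ is now known from the base case --- and another diagram that I would identify (again through braid manipulations and Markov moves) as a simple alternating link with explicit Khovanov support. After checking the non-adjacency hypothesis $v_{\text{min}}\neq h_{\text{min}}+e+1$ and, in the borderline maximum case, the auxiliary vanishing condition on $Kh(D_h)$ in high $\delta$-grading, Proposition~\ref{band}(1) produces the support $[k+\alpha_+,k+\beta_+]=[4n+k-2,6n+k-2]$ for $Kh(D)$.

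The main obstacle is the combinatorial identification of the two horizontal resolutions --- first the $D_h$ in the base case, then the analogous resolution in the inductive step --- as alternating or quasi-alternating links with explicit supports, along with the sign bookkeeping for $e$. Once these identifications are carried out using the normal forms of Lemma~\ref{norm} and Markov moves, and the non-adjacency hypotheses of Corollary~\ref{glue} and Proposition~\ref{band} are verified, the induction closes and the mirror statement for $D'$ follows in parallel.
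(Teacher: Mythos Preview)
Your base case is where the argument breaks. You resolve the negative crossing $\sigma_2^{-1}$ in the closure of $(\sigma_1\sigma_2)^{3n}\sigma_2^{-1}$, so $D_v=T(3,3n)$ with support $[4n-3,6n-1]$. But the target support for $D$ is $[4n-2,6n-2]$, and in Corollary~\ref{glue} the generic formula reads $\delta^-_{\min}=\min\{v_{\min}-1,\,h_{\min}+e\}\le v_{\min}-1=4n-4$. Thus no choice of $D_h$ and $e$ can yield $\delta^-_{\min}=4n-2$ through the non-adjacent branch; you are forced into the adjacent branch $v_{\min}=h_{\min}+e-1$ \emph{with surjectivity of the connecting map} $h^{v_{\min}}_-$. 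You have not established this surjectivity, and it is not a formality --- it would require an explicit Khovanov computation, not just identifying $D_h$ as an alternating link. The same difficulty propagates into your inductive step, where the second $D_h$ you need to identify is also nontrivial.

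The paper sidesteps all of this with a single braid identity you did not use: since $h=(\sigma_1\sigma_2)^3$ is central, one has (up to cyclic conjugation)
\[
(\sigma_1\sigma_2)^{3n}\sigma_1^k\sigma_2^{-1}\;=\;(\sigma_1\sigma_2)^{3n-1}\sigma_1^{k+1},
\]
a \emph{positive} word. Now take $D_+$ to be the closure of $(\sigma_1\sigma_2)^{3n-1}\sigma_1$ and resolve the last (positive) $\sigma_1$: then $D_v=T(3,3n-1)$ with support $[4n-3,6n-3]$, and $D_h$ is simply the unknot with support $[-1,1]$ and $e=4n-1$. In Corollary~\ref{glue} and Proposition~\ref{band} the numbers slot in directly: $\min\{v_{\min}+1,h_{\min}+e\}=\min\{4n-2,4n-2\}=4n-2$ and $\max\{v_{\max}+1,h_{\max}+e\}=\max\{6n-2,4n\}=6n-2$, with the width-preserving hypotheses satisfied for all $n\neq 2$, and a one-line check of $Kh^{0,9}(T(3,5))$ handling $n=2$. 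Proposition~\ref{band} then gives the support for all $k\ge 0$ in one stroke. Your mirror argument for $D'$ is fine; the missing ingredient is this rewriting as a positive braid, which collapses both of your ``obstacle'' $D_h$'s to the unknot.
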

\begin{proof}
Observe that $(\sigma_1\sigma_2)^{3n}\sigma_1^k\sigma_2^{-1} = (\sigma_1\sigma_2)^{3n-1}\sigma_1^{k+1}$ for $n>0$. Let $D_+$ be the closure of the braid $(\sigma_1\sigma_2)^{3n-1}\sigma_1$. Resolve the crossing given by the last $\sigma_1$ to obtain two link diagrams $D_v$ and $D_h$. Then $D_v$ is a diagram for $T(3,3n-1)$, and $D_h$ is a diagram for the unknot. By Corollary \ref{zcoeff}, $Kh(D_v)$ is $[4n-3,6n-3]$-thick. Since $D_h$ is the unknot, $Kh(D_h)$ is $[-1,1]$-thick. Recall that $e=\text{neg}(D_h) - \text{neg}(D_+)$. The diagram $D_h$ has $4n-1$ negative crossings, while the diagram $D_+$ has no negative crossings. Thus $e=4n-1$.

If $n\neq 2$, then $D_+$ is width-preserving. If $n=2$, then the Poincare polynomial of $D_+=T(3,5)$ is 
$$P(T(3,5))= q^7+q^9+q^{11}t^2+q^{15}t^3+q^{13}t^4+q^{15}t^4+q^{17}t^5+q^{17}t^6+q^{19}t^5+q^{21}t^7.$$
Therefore,  $Kh^{0,9}(D_v)$ is nontrivial. Moreover, $Kh^{i,j}(D_h)=0$ for all $i$ if $j\leq 9-3e-1=-13$. Therefore, for $n>0$, Proposition \ref{band} implies that $Kh(D)$ is  $[4n+k-2,6n+k-2]$-thick. The proof for $D^\prime$ is similar.
\end{proof}

\begin{proposition}
\label{wid1}
Let $D$ be the closure of the braid $(\sigma_1\sigma_2)^{3n}\sigma_1^{a_1}\sigma_2^{-b_1}\cdots\sigma_1^{a_k}\sigma_2^{-b_k}$, where each $a_i,b_i>0$. Let $a = \sum_{i=1}^k a_i$ and $b = \sum_{i=1}^k b_i$. If $n>0$, then $Kh(D)$ is $[4n+a-b-1,6n+a-b-1]$-thick. If $n<0$, then $Kh(D)$ is $[6n+a-b+1,4n+a-b+1]$-thick. Hence, if $n\neq 0$, then $w_{Kh}(D)=|n|+1$.
\end{proposition}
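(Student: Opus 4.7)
The plan is a double induction for $n>0$, reducing the $n<0$ case to $n>0$ via mirroring: Proposition \ref{kholink} negates the $\delta$-grading, and the mirror of the closure of $w\in B_3$ is the closure of the word obtained by inverting all generators. After applying the $\sigma_1\leftrightarrow\sigma_2$ automorphism of $B_3$ (which preserves the link type of the closure), one obtains a braid of the required form with $n$ replaced by $-n$, and the support formulas line up.

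For $n>0$, I would induct on $k$ with an inner induction on the exponents $a_k$ and $b_k$ of the final block. The outer base case $k=1$ starts from Proposition \ref{case1lem}, which handles $b_1=1$, and extends to arbitrary $b_1$ by applying part (2) of Proposition \ref{band} at the $\sigma_2^{-1}$ crossing, twisting by $C(-b_1)$. The required input on $Kh(D_v)$ and $Kh(D_h)$ is already available from the proof of Proposition \ref{case1lem}, where $D_v$ is a diagram for $T(3,3n-1)$ and $D_h$ is the unknot, with the special case $n=2$ treated via the explicit Poincar\'e polynomial of $T(3,5)$; the conclusion of Proposition \ref{band} gives the support $[4n+a_1-b_1-1,\,6n+a_1-b_1-1]$ as required.

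For the outer inductive step $k-1\to k$, I would first establish the case $a_k=b_k=1$ and then extend to arbitrary $a_k,b_k$ by another inner induction with Proposition \ref{band} applied at the final $\sigma_1$ and $\sigma_2^{-1}$ crossings. To handle $a_k=b_k=1$, resolve the final $\sigma_2^{-1}$ of $B=(\sigma_1\sigma_2)^{3n}\sigma_1^{a_1}\sigma_2^{-b_1}\cdots\sigma_1^{a_{k-1}}\sigma_2^{-b_{k-1}}\sigma_1\sigma_2^{-1}$. Using centrality of $(\sigma_1\sigma_2)^{3n}$, the identity $(\sigma_1\sigma_2)^{3n}\sigma_2^{-1}=(\sigma_1\sigma_2)^{3n-1}\sigma_1$, and cyclic conjugation of braid closures, the vertical resolution $D_v$ rewrites as the closure of $(\sigma_1\sigma_2)^{3n}\sigma_1^{a_1+1}\sigma_2^{-b_1}\cdots\sigma_1^{a_{k-1}}\sigma_2^{-b_{k-1}}$, a $(k-1)$-block braid to which the outer inductive hypothesis applies with $a'-b'=a-b+1$, contributing $v_{\min}=4n+a-b$ and $v_{\max}=6n+a-b$.

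The main obstacle is controlling the horizontal resolution $D_h$ in the outer inductive step: it corresponds to a partial plat closure rather than a standard closed $3$-braid, and its Khovanov support is not directly given by the inductive hypothesis. I expect to handle this either by a further braid simplification that reduces $D_h$ (after Markov-type destabilization) to an alternating link, whose width follows from Theorem \ref{qalt} together with Corollary \ref{redwid}, or by appealing to the marginal condition in Proposition \ref{band}, which only requires the vanishing of $Kh^{k,l}(D_h)$ in certain bidegrees rather than full knowledge of its support. Once $D_v$ and $D_h$ are both in hand, Proposition \ref{band} produces the desired support $[4n+a-b-1,\,6n+a-b-1]$ for $B$, closing the outer induction and giving $w_{Kh}(D)=|n|+1$ once the $n<0$ case is dispatched via mirroring.
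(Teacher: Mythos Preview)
Your overall strategy---resolve at a $\sigma_2^{-1}$ crossing, induct, and hope $D_h$ is tractable---matches the paper's, and your mirroring reduction for $n<0$ is fine. But the execution diverges from the paper in two places, one of which is a genuine error and the other a gap you yourself flag.

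First, the paper inducts simply on $b=\sum b_i$, not on the number of blocks $k$. The base case $b=1$ is exactly Proposition~\ref{case1lem} (any $a$, any distribution of the $a_i$'s, since by conjugation and centrality the closure of $(\sigma_1\sigma_2)^{3n}\sigma_1^{a_1}\sigma_2^{-1}\sigma_1^{a-a_1}$ equals that of $(\sigma_1\sigma_2)^{3n}\sigma_1^{a}\sigma_2^{-1}$). Your base case has a mistake: to extend from $b_1=1$ to arbitrary $b_1$ via Proposition~\ref{band} you must resolve at the $\sigma_2^{-1}$ crossing, and the resulting $D_v,D_h$ are \emph{not} the ones from the proof of Proposition~\ref{case1lem} (which resolves at a $\sigma_1$ crossing after rewriting the word). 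So the sentence ``the required input on $Kh(D_v)$ and $Kh(D_h)$ is already available from the proof of Proposition~\ref{case1lem}'' is wrong.

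Second, the obstacle you identify is exactly where the paper does the real work. Resolving $D_{b+1}$ at the last $\sigma_2^{-1}$, the horizontal resolution $D_h$ collapses strands $2$ and $3$ to a single strand carrying the alternating word $\sigma_1^{a_1}\sigma_2^{-b_1}\cdots$ on two strands; the resulting link is alternating. The paper then invokes Theorem~\ref{qalt} and computes $\sigma(L)$ explicitly via the Gordon--Litherland formula $\sigma(L)=\#(\text{black regions})-\#(\text{positive crossings})-1$ on an alternating diagram, obtaining $\sigma(L)=m-a+3$ where $m$ is the number of negative crossings in the alternating part of $D_h$. With $e=4n+m-b$ one checks the two inequalities needed for Corollary~\ref{glue} (not Proposition~\ref{band}), and the induction closes. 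Your proposal correctly guesses that $D_h$ should be alternating but stops short of verifying it or carrying out the signature computation, and that computation is the crux.
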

\begin{proof}
Suppose $n>0$. We proceed by induction on $b$. Suppose $b=1$.
Let $D_1$ be the closure of the braid $(\sigma_1\sigma_2)^{3n}\sigma_1^a\sigma_2^{-1}$. Proposition \ref{case1lem} states that $Kh(D_1)$ is supported in the band $[4n+a-2,6n+a-2]$. Since $(\sigma_1\sigma_2)^3$ is in the center of $B_3$, it follows that $D_1$ represents the same link as $D_1^\prime$, the closure of $(\sigma_1\sigma_2)^{3n}\sigma_1^{a_1}\sigma_2^{-1}\sigma_1^{a-a_1}$. 

If $D_b$ is the closure of the braid $(\sigma_1\sigma_2)^{3n}\sigma_1^{p_1}\sigma_2^{-q_1}\cdots\sigma_1^{p_j}\sigma_2^{-q_j}$ where each $p_i,q_i>0$, $\sum_{i=1}^{j} p_i = a$ and $\sum_{i=1}^j q_i =b$, then by way of induction, suppose $Kh(D_b)$ is $[4n+a-b-1,6n+a-b-1]$-thick.
Let $D_{b+1}$ be the closure of the braid $(\sigma_1\sigma_2)^{3n}\sigma_1^{p^\prime_1}\sigma_2^{-q^\prime_1}\cdots\sigma_1^{p^\prime_l}\sigma_2^{-q^\prime_l}$, where $p^\prime_i,q^\prime_i>0$, $\sum_{i=1}^l p^\prime_i = a$, and $\sum_{i=1}^l q^\prime_i=b+1$. Resolve $D_{b+1}$ at the crossing corresponding to the last $\sigma_2^{-1}$ to obtain diagrams $D_v$ and $D_h$. By the inductive hypothesis, $Kh(D_v)$ is $[4n+a-b-1,6n+a-b-1]$-thick. Let $m$ be the number of negative crossings in the alternating part of $D_h$. The alternating part of $D_h$ has $a+b$ crossings. 

Also, $D_h$ is a non-alternating diagram for an alternating link $L$. Hence, Theorem \ref{qalt} implies that $Kh(L)$ is $[-\sigma(L)-1,-\sigma(L)+1]$-thick. One can calculate the signature of an alternating link from any alternating diagram by a result of Gordon and Litherland \cite{gor}. Color the regions of the alternating diagram in a checkerboard fashion so that near each crossing it looks like Figure \ref{sigcolor}.
\begin{figure}[h]
\includegraphics{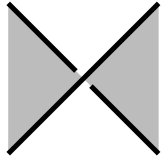}
\caption{Color the alternating diagram in a checkerboard fashion such that a neighborhood of each crossing appears as above.}
\label{sigcolor}
\end{figure}

\noindent Then the signature is given by
$$\sigma(L)=\#(\text{black regions})-\#(\text{positive crossings})-1.$$

There is another diagram representing $L$ that has $b+2$ black regions and $a+b-m$ positive crossings (see Figure \ref{proofex}). Therefore, $\sigma(L)=m-a+3$, and hence $Kh(D^\prime_h)$ is $[a-m-4,a-m-2]$-thick. Since there are $4n$ negative crossing in the full twist part of $D_h$ and $m$ negative crossings in the alternating part of $D_h$, it follows that $\text{neg}(D_h)=4n+m$. Let $D_+$ be the closure of the braid  $(\sigma_1\sigma_2)^{3n}\sigma_1^{p^\prime_1}\sigma_2^{-d^\prime_1}\cdots\sigma_1^{p^\prime_l}\sigma_2^{-d^\prime_l+1}\sigma_2$. Then $\text{neg}(D_+)=b$, and thus $e=\text{neg}(D_h)-\text{neg}(D_+)=4n+m-b$. For $n>0$,
\begin{eqnarray*}
4n+a-b-1 & \neq & (a-m-4) + (4n + m -b) -1\text{ and}\\
6n+a-b-1 & \neq & (a-m-2) + (4n+m-b) -1.
\end{eqnarray*}
Therefore, Theorem \ref{glue} implies that $Kh(D_{b+1})$ is $[4n+a-b-2,6n+a-b-2]$-thick. The proof for $n<0$ is similar.
\end{proof}
\begin{figure}[h]
\includegraphics[scale=.3]{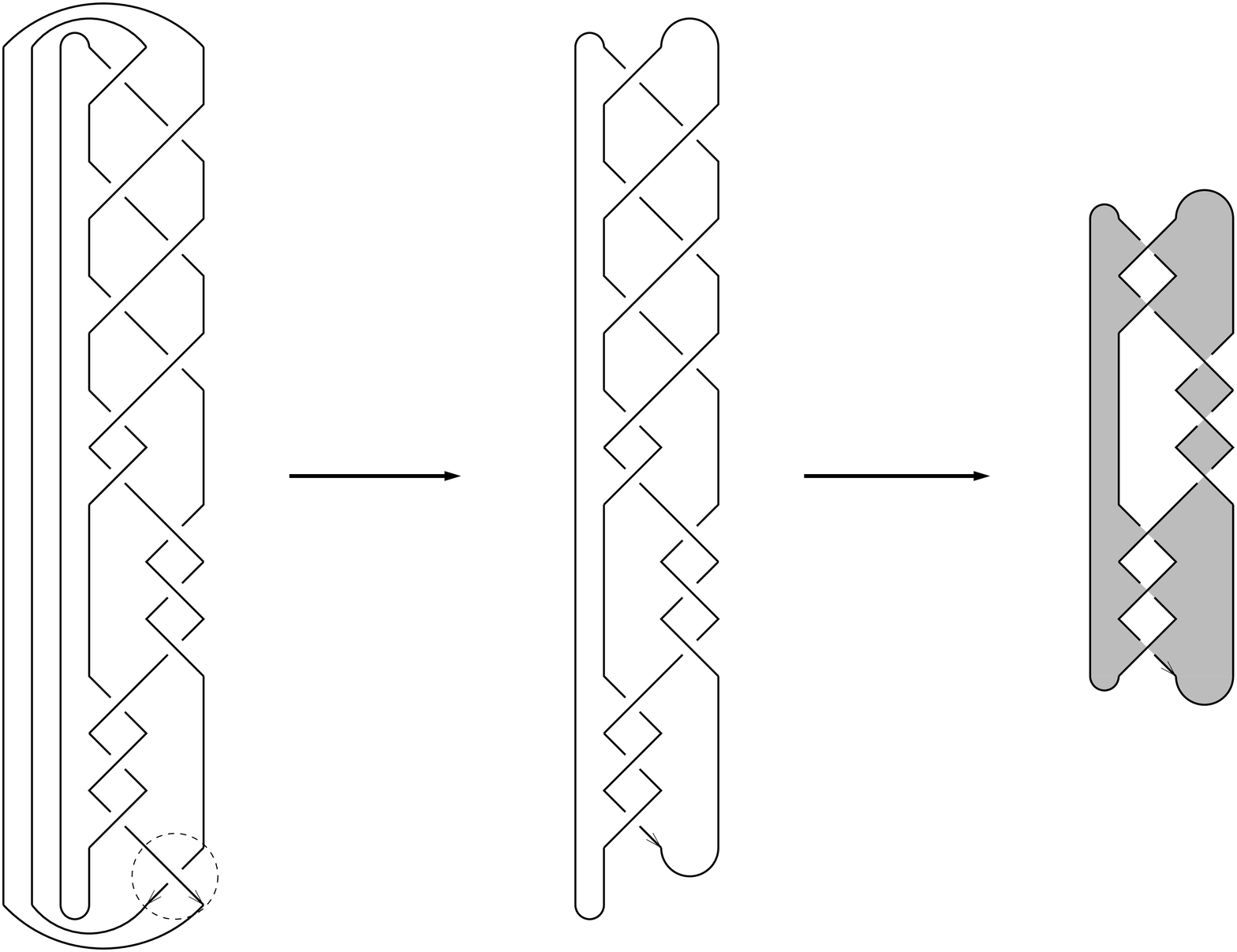}
\caption{The closure of the braid $(\sigma_1\sigma_2)^3\sigma_1^2\sigma_2^{-3}\sigma_1^3\sigma_2^{-1}$ together with its resolution and an alternating diagram of its resolution. There are $5$ black regions and $2$ negative crossings in the alternating diagram.}
\label{proofex}
\end{figure}

\begin{proposition}
\label{wid2}
Let $D$ be the closure of the braid $(\sigma_1\sigma_2)^{3n}\sigma_2^m$.
\begin{enumerate}
\item If $n>0$ and $m\geq 0$, then $Kh(D)$ is $[4n+m-3,6n+m-1]$-thick and $w_{Kh}(D)=n+2$.
\item If $n<0$ and $m\leq 0$, then $Kh(D)$ is $[6n+m+1,4n+m+3]$-thick and $w_{Kh}(D)=-n+2$.
\item If $n=1$ and $m<-3$, then $Kh(D)$ is $[m+3,m+7]$-thick and $w_{Kh}(D)=3.$
\item If $n=-1$ and $m>3$, then $Kh(D)$ is $[m-7,m-3]$-thick and $w_{Kh}(D)=3.$
\item If both $n=1$ and $-3\leq m<0$ or both $n>1$ and $m<0$, then $Kh(D)$ is $[4n+m-1,\newline 6n+m-1]$-thick and $w_{Kh}(D) = n+1$.
\item If both $n=-1$ and $0<m\leq 3$ or both $n<-1$ and $m>0$, then $Kh(D)$ is $[6n+m+1,\newline 4n+m+1]$-thick and $w_{Kh}(D)=-n+1$.
\end{enumerate}
\end{proposition}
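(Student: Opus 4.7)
By Proposition~\ref{kholink}, taking mirrors sends the closure of $(\sigma_1\sigma_2)^{3n}\sigma_2^m$ to the closure of $(\sigma_1\sigma_2)^{-3n}\sigma_2^{-m}$ (using that $(\sigma_1\sigma_2)^3$ is central in $B_3$, so $(\sigma_1^{-1}\sigma_2^{-1})^{3n} = (\sigma_1\sigma_2)^{-3n}$) and reverses the $\delta$-grading. Hence cases~(2), (4), (6) follow immediately from (1), (3), (5), and I only describe the latter three. The overall plan is induction on $|m|$, applying Proposition~\ref{band} at each step and anchoring the induction via Corollary~\ref{zcoeff} or Proposition~\ref{case1lem}.

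For case~(1), the base case $m = 0$ is $T(3, 3n)$ with support $[4n - 3, 6n - 1]$ by Corollary~\ref{zcoeff}. For the inductive step from $m$ to $m+1$, take $D$ to be a diagram of the closure of $(\sigma_1\sigma_2)^{3n}\sigma_2^m$ with distinguished crossing $x$ the terminal $\sigma_2$, and apply Proposition~\ref{band}(1) with tangle $C(2)$ to produce the closure of $(\sigma_1\sigma_2)^{3n}\sigma_2^{m+1}$. The resolution $D_v$ is the closure of $(\sigma_1\sigma_2)^{3n}\sigma_2^{m-1}$ (inductively known for $m \geq 1$; for $m = 0$ it is the closure of $(\sigma_1\sigma_2)^{3n-1}\sigma_1 = (\sigma_1\sigma_2)^{3n}\sigma_2^{-1}$, whose support $[4n - 2, 6n - 2]$ is Proposition~\ref{case1lem} with $k = 0$), while $D_h$ is the horizontal smoothing, which I identify up to Reidemeister moves with an alternating link whose support is concentrated on a single $\delta$-line (Theorem~\ref{qalt}), computable from its signature. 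I verify $v_{\min} \neq h_{\min} + e + 1$ directly; whenever $v_{\max} = h_{\max} + e + 1$, the secondary nondegeneracy hypothesis is checked against the explicit formula for $Kh(T(3, q); \mathbb{Q})$ in Theorem~\ref{turn}, which pins down the unique extremal generator at $\delta = v_{\max}$.

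Case~(5) is analogous, inducting downward in $m$. The base $m = -1$ is Proposition~\ref{case1lem} with $k = 0$, giving support $[4n - 2, 6n - 2]$. At each step I use Proposition~\ref{band}(2) at a $\sigma_2^{-1}$ crossing with tangle $C(-2)$; $D_v$ is known by induction, and $D_h$ is again alternating. For case~(3), the phase transition at $n = 1, m = -4$ is reached from the $n = 1, m = -3$ subcase of (5) (support $[0, 2]$), but here the primary hypothesis $v_{\max} \neq h_{\max} + e - 1$ of Proposition~\ref{band}(2) fails. In its place I apply Theorem~\ref{les} directly, using the known supports of $D_v$ and $D_h$, and track the connecting map to show a new extremal generator appears at $\delta = v_{\max} + 1$, opening the width from $2$ to $3$ and producing support $[-1, 3]$. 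For $m \leq -5$ the width-preserving hypothesis is restored, and Proposition~\ref{band}(2) applies inductively to give $[m + 3, m + 7]$.

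The main obstacle throughout is the verification of the (sometimes refined) width-preserving hypothesis of Proposition~\ref{band} at each inductive step: one must locate a generator of $Kh(D_v)$ in the extremal $\delta$-grading whose image under the connecting homomorphism in Theorem~\ref{les} falls outside the support of $Kh(D_h)$. This requires combining Theorem~\ref{turn}'s formula for $Kh(T(3, q); \mathbb{Q})$ for the base cases, the bookkeeping of supports propagated through the induction, and Theorem~\ref{qalt} for the single-$\delta$-line support of each alternating $D_h$. A secondary obstacle is the direct long-exact-sequence computation at the phase transition $(n, m) = (1, -4)$ in case~(3), where Proposition~\ref{band} does not apply cleanly and the connecting map must be tracked by hand.
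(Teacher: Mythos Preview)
Your overall plan---anchor at torus links via Corollary~\ref{zcoeff} and Proposition~\ref{case1lem}, then propagate by Proposition~\ref{band}---is the paper's approach. Two points need repair, however.

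First, in every case here the horizontal resolution $D_h$ of the $\sigma_2^{\pm 1}$ crossing in the closed $3$-braid is the \emph{two-component unlink}, not a nonsplit alternating link. The unlink is not quasi-alternating (its determinant is zero), so Theorem~\ref{qalt} does not apply; its Khovanov homology is $[-2,2]$-thick (width $3$), not supported on a single $\delta$-line. This matters for the numerical check of the hypotheses of Proposition~\ref{band}: with the correct $h_{\min}=-2$, $h_{\max}=2$ and $e=4n$, one finds for instance in case~(1) that $v_{\max}=h_{\max}+e+1$ exactly when $n=2$, forcing the refined hypothesis there. The paper handles this by exhibiting an explicit generator in $Kh^{0,11}(T(3,6))$ and checking that $Kh^{k,l}(D_h)=0$ for $l\leq -14$. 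Your description (``computable from its signature'' via Theorem~\ref{qalt}) would give the wrong support for $D_h$ and a wrong numerical verification.

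Second, the phase transition in case~(3) at $(n,m)=(1,-4)$ cannot be established by ``tracking the connecting map'' from the supports of $D_v$ and $D_h$ alone. Showing that a new generator appears at $\delta=v_{\max}+1$ amounts to showing that the map $h_-:Kh^{\delta-e}(D_h)\to Kh^{\delta-1}(D_v)$ fails to be injective in some bigrading, and nothing in your data determines that. The paper avoids this entirely: it takes the closure of $(\sigma_1\sigma_2)^3\sigma_2^{-4}$---which is the link $L6n1$---as the base case, reads off its Poincar\'e polynomial directly (giving support $[-1,3]$), and then applies Proposition~\ref{band} once for all $m<-4$. Once you use that explicit computation, the rest of your argument for case~(3) goes through. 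As a minor remark, Proposition~\ref{band} already handles all $m$ in one shot, so the step-by-step induction with $C(\pm 2)$ is unnecessary.
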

\begin{proof}
We prove statements (1), (3), and (5). Statements (2), (4), and (6) are proved similarly.\\
{\bf (1)}. Suppose $n>0$ and $m\geq 0$. Let $D_+$ be the closure of the braid $(\sigma_1\sigma_2)^{3n}\sigma_2$. Resolve $D_+$ at the crossing corresponding to the last $\sigma_2$ to obtain diagrams $D_v$ and $D_h$. Then $D_v$ is a diagram for $T(3,3n)$. By Corollary \ref{zcoeff}, $Kh(D_v)$ is $[4n-3,6n-1]$-thick. Also, $D_h$ is the two component unlink, and hence $Kh(D_h)$ is $[-2,2]$-thick. The diagram $D_h$ has $4n$ negative crossings, and the diagram $D_+$ has no negative crossings. Thus $e= 4n$. 

Observe that $4n-3\neq -2 + e +1$ and $6n-1 =  2 + e + 1$ when $n=2$. If $n=2$, then $D_v$ is $T(3,6)$, and 
$$P(T(3,6))= q^9+q^{11}+q^{13}t^2+q^{17}t^3+q^{15}t^4+q^{17}t^4+q^{19}t^5+q^{19}t^6+q^{21}t^7+q^{21}t^8+q^{23}t^7+3q^{23}t^8+2q^{25}t^8.$$
Therefore $Kh^{0,11}(D_v)$ is nontrivial. Also, $Kh^{i,j}(D_h)=0$ for all $i$ if $j\leq 11-3e-1 = -14$. Hence, Theorem \ref{band} implies that $Kh(D)$ is $[4n+m-3,6n+m-1]$-thick.

{\bf (3)}. Suppose $n=1$ and $m<-3$. Let $D_-$ be the closure of $(\sigma_1\sigma_2)^{3}\sigma_2^{-5}$. Resolve $D_-$ at the crossing corresponding to the last $\sigma_2^{-1}$ to obtain diagrams $D_v$ and $D_h$. The diagram $D_h$ is a diagram for the two component unlink, and hence $Kh(D_h)$ is $[-2,2]$-thick. The diagram $D_v$ is a diagram for the link $L(6,n,1)$ in Thistlethwaite's link table (see Figure \ref{L6n1}).
\begin{figure}[h]
\includegraphics[scale=.3]{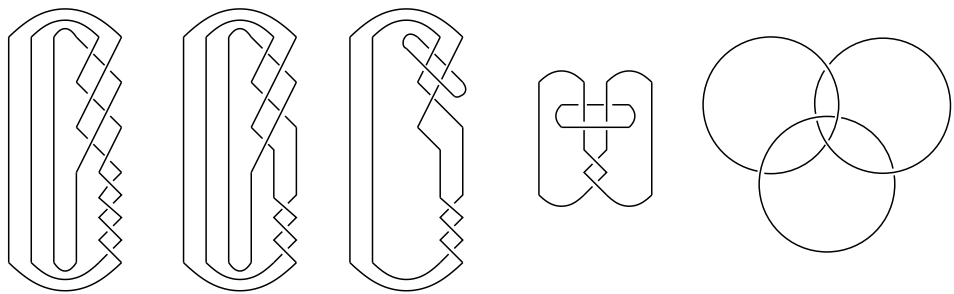}
\caption{A transformation of the closure of $(\sigma_1\sigma_2)^3\sigma_2^{-4}$ into L(6,n,1).}
\label{L6n1}
\end{figure}
The Poincare polynomial for L(6,n,1) is given by
$$P(L(6,n,1))=2q^{-1}+3q+q^3+qt+q^5t^2+q^7t^4+q^9t^4.$$
Therefore $Kh(D_v)$ is $[-1,3]$-thick. The diagram $D_h$ has $4$ negative crossings while the diagram $D_+$ also has $4$ negative crossings. Thus $e=0$. Since $-1 \neq -2 + e - 1$ and $3 \neq 2 + e -1$, Theorem \ref{band} implies that $Kh(D)$ is $[m+3,m+7]$-thick.

{\bf (5)}. If $n=1$ and $-3\leq m<0$, then Baldwin \cite{bald} has shown that $D$ is quasi-alternating. Therefore, Theorem \ref{qalt} implies that $Kh(D)$ is $[-\sigma(L)-1,-\sigma(L)+1]$-thick, where $L$ is the link type of $D$. A straightforward calculation of signature gives the desired result.

Suppose $n>1$ and $m<0$. Observe that $(\sigma_1\sigma_2)^{3n}\sigma_2^{-1} =(\sigma_1\sigma_2)^{3n-1}\sigma_1$. Let $D_+$ be the closure of the braid $(\sigma_1\sigma_2)^{3n-1}\sigma_1$. Resolve $D_+$ at the crossing corresponding to the last $\sigma_1$ to obtain diagrams $D_v$ and $D_h$. Since $D_v$ is a diagram for $T(3,3n-1)$, it follows that $Kh(D_v)$ is $[4n-3,6n-3]$-thick. Since $D_h$ is a diagram for the unknot, it follows that $Kh(D_h)$ is $[-1,1]$-thick. The diagram $D_h$ has $4n-1$ negative crossings, and $D_v$ has no negative crossings. Thus $e=4n-1$. 

If $n=2$, then $6n-3 = 1 + e + 1$, and the long exact sequence of Theorem \ref{les} looks like
$$0\to Kh^{0,10}(D_+)\to Kh^{0,9}(D_v)\to Kh^{-7,-13}(D_h)\to\cdots.$$
Since $Kh^{-7,-13}(D_h)=0$ and $Kh^{0,9}(D_v)$ is nontrivial, it follows that $Kh^{0,10}(D_+)$ is nontrivial. Since $4n-3 \neq -1 + e + 1$ and $6n-3 \neq 1 + e + 1$ for $n>2$, Corollary \ref{glue} implies that $Kh(D_+)$ is $[4n-2,6n-2]$-thick.

Let $D_-$ be the closure of $(\sigma_1\sigma_2)^{3n-1}\sigma_1\sigma_2^{-1}$. Resolve $D_-$ at the crossing given by the last $\sigma_2^{-1}$ to obtain diagrams $D_v$ and $D_h$. The diagram $D_v$ is the closure of the braid $(\sigma_1\sigma_2)^{3n-1}\sigma_1$, and hence $Kh(D_v)$ is $[4n-2,6n-2]$-thick. The link $D_h$ is a diagram for the two component unlink, and thus $Kh(D_h)$ is $[-2,2]$-thick. The diagram $D_h$ has $4n-1$ negative crossings, and $D_+$ has no negative crossings. Thus $e=4n-1$.

For $n>1$, we have $4n-2 \neq -2 + e -1$ and $6n-2 \neq 2 + e -1$. Therefore, Theorem \ref{band} implies that $Kh(D)$ is $[4n+m-1,6n+m-1]$-thick. 
\end{proof}

\begin{proposition}
\label{wid3}
Let $D$ be the closure of the braid $(\sigma_1\sigma_2)^{3n}\sigma_1^m\sigma_2^{-1}$, where $m\in\{-1,-2,-3\}$.
\begin{enumerate}
\item If $n>0$, then $Kh(D)$ is $[4n+m-2,6n+m-2]$-thick, and $w_{Kh}(D)=n+1$.
\item If $n<0$, then $Kh(D)$ is $[6n+m,4n+m+2]$-thick, and $w_{Kh}(D)=-n+2$.
\end{enumerate}
\end{proposition}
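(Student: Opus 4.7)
I focus on part (1) ($n > 0$); part (2) is entirely analogous but with Proposition \ref{wid2}(2) replacing Proposition \ref{wid2}(5) throughout. Fix $n > 0$ and $m \in \{-1,-2,-3\}$, and let $D$ denote the closure of $(\sigma_1\sigma_2)^{3n}\sigma_1^m\sigma_2^{-1}$.

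The strategy is to resolve $D$ at its terminal $\sigma_2^{-1}$ crossing and apply Corollary \ref{glue}. The vertical resolution $D_v$ is the closure of $(\sigma_1\sigma_2)^{3n}\sigma_1^m$; because the geometric $180^\circ$ flip of a braid diagram realises the automorphism of $B_3$ swapping $\sigma_1$ with $\sigma_2$ and $(\sigma_1\sigma_2)^{3n}=(\sigma_2\sigma_1)^{3n}$ is central, $D_v$ represents the same link as the closure of $(\sigma_1\sigma_2)^{3n}\sigma_2^m$, which is one of the braids treated by Proposition \ref{wid2}(5). Hence $Kh(D_v)$ is $[4n+m-1,\,6n+m-1]$-thick. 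The horizontal resolution $D_h$ is the diagram obtained by replacing the $\sigma_2^{-1}$ by an internal cap--cup pair joining strands $2$ and $3$; pushing the lower arc out across the braid closure exhibits $D_h$ as a mixed ``plat-on-$\{2,3\}$ plus braid-on-$\{1\}$'' closure of $(\sigma_1\sigma_2)^{3n}\sigma_1^m$. After destabilising the middle strand and performing Reidemeister moves, $D_h$ should simplify to a link already controlled by our tools --- either a torus link (via Corollary \ref{zcoeff}) or a quasi-alternating link (via Theorem \ref{qalt}). The small-$n$ sanity checks are reassuring: for $n=1$, the braid identities $(\sigma_1\sigma_2)^3\sigma_1^{-1}\sigma_2^{-1}\sim(\sigma_1\sigma_2)^2$, $(\sigma_1\sigma_2)^3\sigma_1^{-2}\sigma_2^{-1}\sim\sigma_2\sigma_1^2$, and $(\sigma_1\sigma_2)^3\sigma_1^{-3}\sigma_2^{-1}\sim\sigma_2\sigma_1$ identify $D$ with the right-handed trefoil, the positive Hopf link, and the unknot, respectively, each matching the predicted thickness $[4n+m-2,\,6n+m-2]$.

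Once $[h_{\min},h_{\max}]$ is known, I compute $e = \mathrm{neg}(D_h) - \mathrm{neg}(D_+)$. Since $\mathrm{neg}(D_+) = |m|$ (the only negative letters in $(\sigma_1\sigma_2)^{3n}\sigma_1^m\sigma_2$ come from $\sigma_1^m$) and horizontally smoothing the $\sigma_2^{-1}$ leaves $\mathrm{neg}(D_h) = |m|$ as well, we obtain $e = 0$. Verifying the ``generic'' branches of Corollary \ref{glue} --- namely $h_{\min} \neq v_{\min}+1 = 4n+m$ and $h_{\max} \neq v_{\max}+1 = 6n+m$ --- then yields $Kh(D)$ thick in $[v_{\min}-1,\,v_{\max}-1] = [4n+m-2,\,6n+m-2]$, giving $w_{Kh}(D) = n+1$. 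A cleaner variant avoids re-analysing $D_h$ for each $m$: first establish $m=-1$ by the resolution argument above, then apply Proposition \ref{band}(2) at the unique $\sigma_1^{-1}$ crossing of $D_{-1}$, twisting by $\tau_{-2}$ and $\tau_{-3}$ to reach $m=-2$ and $m=-3$; one of the two resolutions at that crossing is the closure of $(\sigma_1\sigma_2)^{3n-1}\sigma_1$, again covered by Proposition \ref{wid2}(5), while the other needs to be understood only once.

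\textbf{Main obstacle.} The principal difficulty is pinning down $D_h$ concretely, since it is not a braid closure and its simplification requires both Markov and Reidemeister moves. The fallback is to show $D_h$ is quasi-alternating, so that Theorem \ref{qalt} determines its $\delta$-support; the remaining bookkeeping in Corollary \ref{glue} (and the sign-reversed version of the same argument that proves part (2)) is then routine.
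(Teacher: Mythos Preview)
Your approach is genuinely different from the paper's, and the gap is exactly the one you flag as the ``main obstacle'': you never identify $D_h$. The claim $e=0$ is not justified either. The horizontal smoothing of the $\sigma_2^{-1}$ crossing is the non-oriented resolution, so $D_h$ must be re-oriented; with any such orientation the crossings coming from $(\sigma_1\sigma_2)^{3n}$ will no longer all be positive, and $\text{neg}(D_h)=|m|$ fails. Without knowing $[h_{\min},h_{\max}]$ and the correct $e$ you cannot check the generic hypotheses of Corollary~\ref{glue}, nor can you conclude that $\min\{v_{\min}-1,h_{\min}+e\}=v_{\min}-1$ and $\max\{v_{\max}-1,h_{\max}+e\}=v_{\max}-1$, which is what you need for the stated thickness. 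Saying $D_h$ ``should simplify'' or invoking quasi-alternating status without proof is not enough, and your ``cleaner variant'' has the same defect: resolving $D_{-1}$ at its $\sigma_1^{-1}$ crossing still produces an unidentified mixed plat/braid closure on the horizontal side.

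The paper sidesteps this entirely by simplifying the braid word \emph{before} resolving. For $m=-1$ one has $(\sigma_1\sigma_2)^{3n}\sigma_1^{-1}\sigma_2^{-1}=(\sigma_1\sigma_2)^{3n-1}$, so $D=T(3,3n-1)$ and Corollary~\ref{zcoeff} applies directly; for $m=-3$ a short manipulation (up to conjugacy) gives $(\sigma_1\sigma_2)^{3n-2}$, so $D=T(3,3n-2)$. Only $m=-2$ needs a resolution: the word is rewritten as $(\sigma_1\sigma_2)^{3n-2}\sigma_1$ and one resolves at the final \emph{positive} $\sigma_1$, giving $D_v=T(3,3n-2)$ and $D_h$ the unknot (a Markov destabilisation), with $e=4n-3$ immediate. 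A single borderline case ($n=2$) is handled by inspecting the Poincar\'e polynomial. Part~(2) follows the same pattern, with the $m=-2$ resolution now at a negative $\sigma_1^{-1}$. The moral: a little braid algebra makes both resolutions transparent, whereas resolving the word as written forces you to analyse a diagram you never pin down.
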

\begin{proof}
{\bf (1)}. Suppose $n>0$. If $m= -1$, then $D$ is a diagram for $T(3,3n-1)$, and the result follows. 

Let $m=-2$. Then, up to conjugation in $B_3$, we have
\begin{eqnarray*}
(\sigma_1\sigma_2)^{3n}\sigma_1^{-2}\sigma_2^{-1} & = & (\sigma_1\sigma_2)^{3n-1}\sigma_1^{-1}\\
&  = & (\sigma_2\sigma_1)^{3n-2} \sigma_2\\
& = & (\sigma_1\sigma_2)^{3n-2}\sigma_1.
\end{eqnarray*}
If $D^\prime$ is the closure of the braid $(\sigma_1\sigma_2)^{3n-2}\sigma_1$, then $D$ and $D^\prime$ represent the same link. Resolve $D^\prime$ at the crossing corresponding to the final $\sigma_1$ to obtain diagrams $D_v$ and $D_h$. Then $D_v$ is a diagram for $T(3,3n-2)$, and $D_h$ is a diagram for the unknot. Hence $Kh(D_v)$ is $[4n-5,6n-5]$-thick, and $Kh(D_h)$ is $[-1,1]$-thick. The diagram $D_h$ has $4n-3$ negative crossings, and the diagram $D^\prime$ has none. Thus $e=4n-3$. 

Observe that $4n-5\neq -1 + e + 1$, and $6n-5 = 1 + e +1$ when $n=2$. If $n=2$, the long exact sequence of Theorem \ref{les} looks like
$$0\to Kh^{0,8}(D^\prime)\to Kh^{0,7}(D_v)\to Kh^{-5,-9}(D_h)\to\cdots$$
Since $Kh^{-5,-9}(D_h)=0$ and $Kh^{0,7}(L_v)$ is nontrivial, it follows that $Kh^{0,8}(L)$ is nontrivial. Hence Theorem \ref{glue} implies that $Kh(D^\prime)$ is $[4n-4,6n-4]$-thick.

Let $m=-3$. Then, up to conjugation in $B_3$, we have
\begin{eqnarray*}
(\sigma_1\sigma_2)^{3n}\sigma_1^{-3}\sigma_2^{-1} & = &(\sigma_1\sigma_2)^{3n-1}\sigma_1^{-2}\\
& = & (\sigma_2\sigma_1)^{3n-3}\sigma_2\sigma_1\sigma_2\sigma_1^{-1}\\
& = & (\sigma_1\sigma_2)^{3n-2}.
\end{eqnarray*}
Hence $D$ is a diagram for $T(3,3n-2)$, and the result follows.

{\bf (2)}. Let $n<0$. If $m=-1$, then $D$ is a diagram for $T(3,3n-1)$, and the result follows.

Let $m=-2$. Then $D$ is the closure of $(\sigma_1\sigma_2)^{3n-1}\sigma_1^{-1}$. Resolve $D$ at the crossing corresponding to the last $\sigma_1^{-1}$ to obtain diagrams $D_v$ and $D_h$. Then $D_v$ is a diagram for $T(3,3n-1)$, and hence $Kh(D_v)$ is  $[6n-1,4n+1]$-thick. Also, $D_h$ is a diagram for the unknot, and hence $Kh(D_h)$ is $[-1,1]$-thick. The diagram $D_h$ has $-2n-1$ negative crossings, and the diagram $D_+$ has $-6n+2$ negative crossings. Thus $e=4n-1$.

Observe that $4n+1 \neq 1 + e -1$, and $6n-1 = -1 +e -1 $ if $n=-1$. If $n=-1$, the long exact sequence of Theorem \ref{les} looks like 
$$\cdots\to Kh^{5,9}(D_h)\to Kh^{0,-7}(D_v) \to Kh^{0,-8}(D)\to 0.$$
Since $Kh^{5,9}(D_h)=0$ and $Kh^{0,-7}(D_v)$ is nontrivial, it follows that $Kh^{0,-8}(D)$ is nontrivial. Thus $Kh(D)$ is $[6n-2,4n]$-thick.

Let $m=-3$. Then, up to conjugacy in $B_3$, we have 
\begin{eqnarray*}
(\sigma_1\sigma_2)^{3n}\sigma_1^{-3}\sigma_2^{-1} & = & (\sigma_1\sigma_2)^{3n}\sigma_1^{-1}\sigma_2^{-1}\sigma_1^{-2}\\
& = & (\sigma_1\sigma_2)^{3n-2}.
\end{eqnarray*}
In this case $D$ is a diagram for $T(3,3n-2)$, and the result follows.
\end{proof}

We collect the results of Propositions \ref{wid1}, \ref{wid2} and \ref{wid3} into one theorem giving the Khovanov width of closed 3-braids.
\begin{theorem}
\label{3width}
Let $L$ be a closed 3-braid of the form $h^nA$, as in Theorem \ref{three}, where $h=(\sigma_1\sigma_2)^3$ and $n\neq 0$. Then
\begin{equation*}
w_{Kh}(L) = 
\begin{cases}
|n|+2 & \text{if}~L~\text{has no cancellation or if}~L~ \text{is the closure of}~h^{\pm 1}\sigma_2^{\mp m}~\text{where}~m>3,\\
|n|+1 & \text{otherwise.}
\end{cases}
\end{equation*}
\end{theorem}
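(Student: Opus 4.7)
The plan is to invoke Murasugi's classification (Theorem \ref{three}) and dispatch each of the three normal forms to the width computation already established in Propositions \ref{wid1}, \ref{wid2}, and \ref{wid3}. The theorem is really a synthesis that repackages those three propositions around the cancellation dichotomy, so the task is bookkeeping rather than new analysis. Since $w_{Kh}$ is an invariant of the link type, I may freely replace $L$ by the closure of any conjugate representative.

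First, I would recall that by Theorem \ref{three}, $L$ is (up to conjugation) the closure of exactly one of the braids
\[
\text{(i)}\ h^n\sigma_1^{p_1}\sigma_2^{-q_1}\cdots\sigma_1^{p_s}\sigma_2^{-q_s}, \qquad \text{(ii)}\ h^n\sigma_2^m, \qquad \text{(iii)}\ h^n\sigma_1^m\sigma_2^{-1}
\]
with $p_i,q_i,s>0$ in (i) and $m\in\{-1,-2,-3\}$ in (iii). By definition, $L$ has cancellation precisely when the word $A$ contains some $\sigma_i^{\varepsilon}$ with $\mathrm{sign}(\varepsilon)\neq\mathrm{sign}(n)$.

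Now I would handle the three cases in turn. In case (i), $A$ contains both positive letters $\sigma_1^{p_i}$ and negative letters $\sigma_2^{-q_i}$, so cancellation occurs regardless of the sign of $n$; Proposition \ref{wid1} then yields $w_{Kh}(L)=|n|+1$, matching the theorem. In case (iii), every letter of $A$ is negative, so cancellation occurs iff $n>0$: Proposition \ref{wid3}(1) gives width $n+1=|n|+1$ when $n>0$ (cancellation), and Proposition \ref{wid3}(2) gives width $-n+2=|n|+2$ when $n<0$ (no cancellation), again matching.

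Case (ii) is the only one where the stated exceptions arise. Here $A=\sigma_2^m$, so cancellation is equivalent to $\mathrm{sign}(m)=-\mathrm{sign}(n)$. Parts (1) and (2) of Proposition \ref{wid2} cover the no-cancellation subcases ($n>0,\,m\geq 0$ and $n<0,\,m\leq 0$) and both give width $|n|+2$. Parts (5) and (6) cover the cancellation subcases with $|n|\geq 2$, or $|n|=1$ together with $|m|\leq 3$, all yielding width $|n|+1$. Finally, parts (3) and (4) pick out the residual cases $n=\pm 1$ with $|m|>3$; in those cases cancellation is present but the width is still $3=|n|+2$, which is exactly the explicit exception $h^{\pm 1}\sigma_2^{\mp m}$, $m>3$ in the statement.

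There is no genuine obstacle at this stage—the only real care required is ensuring that the sub-case boundaries in Proposition \ref{wid2} are enumerated correctly so that the two exceptional families are neither double-counted nor missed. The substantive arguments (using Proposition \ref{band}, Corollary \ref{glue}, Murasugi normalization via $(\sigma_1\sigma_2)^3$ being central, and the torus-link computations of Corollary \ref{zcoeff}) have all been carried out in the preceding subsection; Theorem \ref{3width} is therefore a short summary statement whose proof is a one-paragraph table lookup against the three propositions.
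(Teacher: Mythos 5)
Your proposal is correct and takes exactly the same route as the paper: the paper offers no argument beyond the sentence ``We collect the results of Propositions \ref{wid1}, \ref{wid2} and \ref{wid3} into one theorem,'' and your case-by-case reconciliation of the cancellation dichotomy against those three propositions (including correctly identifying parts (3)--(4) of Proposition \ref{wid2} as the source of the $h^{\pm 1}\sigma_2^{\mp m}$, $m>3$ exceptions) is the intended bookkeeping. Your write-up is in fact more explicit than the paper's.
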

\begin{remark}
If $n=0$, then $L$ is a (possibly split) alternating link, and thus $w_{Kh}(L)$ can be deduced from Theorem \ref{qalt} and Proposition \ref{kholink}.
\end{remark}

In \cite{bald}, Baldwin classifies quasi-alternating closed 3-braids.
\begin{proposition}
\label{qaltbraids}
Let $L$ be a closed 3-braid and let $h=(\sigma_1\sigma_2)^3$.
\begin{itemize}
\item If $L$ is the closure of the braid $h^n\sigma_1^{a_1}\sigma_2^{-b_2}\cdots\sigma_1^{a_k}\sigma_2^{-b_k}$, where each $a_i,b_i>0$, then $L$ is quasi-alternating if and only if $n\in\{-1,0,1\}$.
\item If $L$ is the closure of the braid $h^n\sigma_2^m$, then $L$ is quasi-alternating if and only if either $n=1$ and $m\in\{-1,-2,-3\}$ or $n=-1$ and $m\in\{1,2,3\}$.
\item If $L$ is the closure of the braid $h^n\sigma_1^m\sigma_2^{-1}$, where $m\in\{-1,-2,-3\}$, then $L$ is quasi-alternating if and only if $n\in\{0,1\}$.
\end{itemize}
\end{proposition}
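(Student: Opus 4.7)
The plan is to combine Theorem \ref{3width} with the fact that every quasi-alternating link has Khovanov width exactly $2$, which is immediate from Theorem \ref{qalt} combined with Corollary \ref{redwid}. The contrapositive will yield the necessity direction in each part of the statement; the sufficiency direction, which requires exhibiting explicit quasi-alternating decompositions, is the content of Baldwin's work in \cite{bald}.

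For necessity I will run through Murasugi's three cases using Theorem \ref{3width}. In Case (i), any $n \neq 0$ produces cancellation in the braid word---the $\sigma_2^{-b_i}$ contradict $n > 0$, and the $\sigma_1^{a_i}$ contradict $n < 0$---so $w_{Kh}(L) = |n|+1$, forcing $|n| \leq 1$. In Case (ii), when $m = 0$ or when $n$ and $m$ share a sign there is no cancellation and $w_{Kh}(L) = |n|+2$; when $n = \pm 1$ and $m$ has opposite sign with $|m| > 3$, the special exception in Theorem \ref{3width} gives $w_{Kh}(L) = 3$; and when $|n| \geq 2$ with cancellation, $w_{Kh}(L) = |n|+1 \geq 3$. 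This rules out everything except the enumerated pairs $(n,m)$ and the case $n = 0$; but $n = 0$ in Case (ii) gives the closure of $\sigma_2^m$, which splits off an unknotted first strand and hence has determinant $0$, so it is not quasi-alternating. In Case (iii), with $m < 0$ fixed, cancellation occurs precisely when $n > 0$, giving $w_{Kh}(L) = n+1$; and $n < 0$ gives no cancellation with $w_{Kh}(L) = -n+2 \geq 3$, so $n \in \{0, 1\}$.

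For sufficiency, the $n = 0$ subcases in Cases (i) and (iii) follow because the braid closures are (small) alternating links---for Case (iii) one verifies the three braids $\sigma_1^m\sigma_2^{-1}$ with $m \in \{-1,-2,-3\}$ directly---and alternating links are automatically quasi-alternating. The $|n| = 1$ cases are the hard part: to exhibit the listed braids as quasi-alternating, one inductively picks, in each braid word, a crossing whose two unoriented resolutions yield closed $3$-braids already on the quasi-alternating list (with smaller $k$ in Case (i), or smaller $|m|$ in Cases (ii) and (iii)) and verifies the determinant additivity $\det(L) = \det(L_0) + \det(L_1)$ at each step. The main obstacle is finding the correct crossing at every stage of the induction and keeping track of determinants---this bookkeeping is carried out in full detail by Baldwin in \cite{bald}, and it is on this construction that the sufficiency half of the proposition relies.
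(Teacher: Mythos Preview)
Your proposal is correct, but there is no ``paper's own proof'' to compare against: the paper simply states this proposition as Baldwin's result from \cite{bald} and does not prove it. Baldwin's original argument for necessity uses the spectral sequence from reduced Khovanov homology to the Heegaard Floer homology of the branched double cover, together with his computation of $\widehat{HF}$ for these manifolds; your route via Theorem~\ref{3width} stays entirely within Khovanov homology and is in that sense more self-contained relative to this paper. In effect you are observing that the necessity half of Proposition~\ref{qaltbraids} is already a corollary of the paper's width calculations, which the paper itself only remarks on obliquely (in the sentence following Corollary~\ref{thin}).

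One point worth making explicit is the absence of circularity. The proof of Theorem~\ref{3width} (specifically Proposition~\ref{wid2}, part (5)) invokes Baldwin's result that the closures of $h\sigma_2^m$ for $m\in\{-1,-2,-3\}$ are quasi-alternating in order to compute their width. But your necessity argument only needs the width computations for braids \emph{outside} the claimed quasi-alternating list---those with $|n|\geq 2$, or with no cancellation, or in the exceptional $h^{\pm1}\sigma_2^{\mp m}$ family with $|m|>3$---and none of those computations in Propositions~\ref{wid1}--\ref{wid3} rely on Baldwin. So the logic is clean. For sufficiency you are right that there is no shortcut: the explicit quasi-alternating resolutions and determinant bookkeeping must be imported from \cite{bald}.
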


Using the spectral sequence from reduced Khovanov homology of a link to the Heegaard Floer homology of the branched double cover of that link, Baldwin \cite{bald} shows the following corollary. This corollary is also a consequence of Theorem \ref{3width} and Proposition \ref{qaltbraids}.
\begin{corollary}[Baldwin]
\label{thin}
Let $L$ be a closed 3-braid. Then $L$ is quasi-alternating if and only if $w_{\wt{Kh}}(L)=1$.
\end{corollary}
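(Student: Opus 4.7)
The plan is to derive Corollary \ref{thin} by combining the width computation of Theorem \ref{3width} with Baldwin's classification of quasi-alternating 3-braids (Proposition \ref{qaltbraids}), using Corollary \ref{redwid} to translate between reduced and unreduced widths. The forward direction is essentially free: if $L$ is quasi-alternating, then Theorem \ref{qalt} says $\wt{Kh}(L)$ is supported in the single $\delta$-grading $-\sigma(L)$, so by definition $w_{\wt{Kh}}(L)=1$.

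For the converse, I would first use Corollary \ref{redwid} to rewrite the hypothesis $w_{\wt{Kh}}(L)=1$ as $w_{Kh}(L)=2$, and then invoke Murasugi's classification (Theorem \ref{three}) to write $L$ as the closure of $h^n A$ in one of the three normal forms. When $n\neq 0$, Theorem \ref{3width} forces $|n|+1=2$, so $|n|=1$, and also forces $L$ to have cancellation while not being the closure of $h^{\pm 1}\sigma_2^{\mp m}$ with $m>3$. I would then check each of the three Murasugi families separately: in family (1) every word has cancellation and is not one of the excluded $\sigma_2^{\mp m}$ braids, so all $n=\pm 1$ examples satisfy $w_{Kh}=2$ and match Baldwin's list; in family (2) the cancellation condition together with the excluded range forces $n=1,\ m\in\{-1,-2,-3\}$ or $n=-1,\ m\in\{1,2,3\}$, again matching Proposition \ref{qaltbraids}; in family (3) cancellation requires $n=1$ (when $n=-1$ every letter of $A$ is negative, so no cancellation occurs and $w_{Kh}=3$), which again matches the list.

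When $n=0$, the braid $A$ is a product of syllables all having the same sign in each of the three types, so $L$ is a (possibly split) alternating link. For non-split alternating $L$ the link is quasi-alternating and Theorem \ref{qalt} gives $w_{\wt{Kh}}(L)=1$. The split cases can occur only inside family (2), namely $L=T(2,m)\sqcup U$ for some $m$; for these I would use the Künneth-type formula in Proposition \ref{kholink} to show that tensoring by $Kh(U)=\mathbb{Z}_{(0,-1)}\oplus\mathbb{Z}_{(0,1)}$ shifts the support onto two distinct $\delta$-lines, so $w_{\wt{Kh}}(L)\geq 2$ and the hypothesis is violated. Thus every $n=0$ example satisfying $w_{\wt{Kh}}(L)=1$ is a non-split alternating closed 3-braid and is quasi-alternating.

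The main technical point, though a short one, is the bookkeeping: one must verify that the three exact matchings between the ``$w_{Kh}=2$'' clauses coming out of Theorem \ref{3width} and the three bullets of Proposition \ref{qaltbraids} really cover the same braids, including the exceptional exclusion of $h^{\pm 1}\sigma_2^{\mp m}$ for $m>3$ (where $w_{Kh}=3$) and the no-cancellation case of family (3) with $n=-1$. Once these comparisons are made side-by-side, the biconditional follows immediately, and the split alternating sub-case is disposed of by the Künneth computation above.
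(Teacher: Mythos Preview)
Your proposal is correct and matches the paper's own route exactly: the paper simply asserts that the corollary follows from Theorem \ref{3width} and Proposition \ref{qaltbraids}, and your case-by-case verification (together with Corollary \ref{redwid} and the K\"unneth computation for the split links in family (2)) is precisely how one spells that out. One small correction to your $n=0$ discussion: in family (1) the syllables of $A$ do \emph{not} all share a sign, and ``same sign'' would not imply alternating in any case; the relevant point is rather that the closure of $\sigma_1^{p_1}\sigma_2^{-q_1}\cdots\sigma_1^{p_s}\sigma_2^{-q_s}$ is already an alternating diagram, while for family (3) with $n=0$ you can simply invoke Proposition \ref{qaltbraids} directly rather than argue alternation.
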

\begin{remark}
\label{shum}
Shumakovitch has shown that the $9_{46}$ and $10_{140}$ knots (both closed $4$-braids) have reduced Khovanov width 1, but they are not quasi-alternating. One can use either of these knots to generate infinite families of counterexamples to Corollary \ref{thin} for braids with index greater than 3.
\end{remark}

\subsection{Turaev genus of closed 3-braids}

Combining Lemma \ref{norm} with Corollary \ref{braidreduc} gives a useful tool to compute the Turaev genus of closed 3-braids. By using the lower bound given by Proposition \ref{bound}, the Turaev genus  of closed 3-braids can be calculated up to a maximum additive error of at most $1$. 

\begin{proposition}
\label{tur1}
Let $L$ be the link type closure of $(\sigma_1\sigma_2)^{3n}\sigma_1^{a_1}\sigma_2^{-b_1}\cdots\sigma_1^{a_k}\sigma_2^{-b_k}$, where each $a_i,b_i>0$ and $n\neq 0$. Then $|n|-1\leq g_T(L)\leq |n|$.
\end{proposition}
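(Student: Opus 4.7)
The lower bound follows immediately from Propositions~\ref{bound} and~\ref{wid1}: since $w_{Kh}(L) = |n| + 1$, one has $g_T(L) \geq w_{Kh}(L) - 2 = |n| - 1$.

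For the upper bound, I plan to produce a diagram $D$ for $L$ with $g(\Sigma_D) \leq |n|$. Assume first $n > 0$. Because the braid closure is invariant under cyclic permutation and $(\sigma_1\sigma_2)^{3n}$ is central in $B_3$, the link $L$ is also the closure of
\[
(\sigma_1\sigma_2)^{3n}\sigma_2^{-b_k}\sigma_1^{a_1}\sigma_2^{-b_1}\cdots\sigma_1^{a_k}.
\]
Substituting the normal form of Lemma~\ref{norm} for $(\sigma_1\sigma_2)^{3n}$, which ends in a single $\sigma_2$, and combining that trailing $\sigma_2$ with the subsequent $\sigma_2^{-b_k}$ produces the block $\sigma_2^{1-b_k}$. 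Applying Corollary~\ref{braidreduc} to collapse every maximal $\sigma_i^{\varepsilon}$ block to a single letter then yields a diagram $D$ with the same Turaev surface genus; explicitly, $D$ is the closure of $(\sigma_1\sigma_2)^n\sigma_1(\sigma_2^{-1}\sigma_1)^{k-1}$ when $b_k = 1$, and of $(\sigma_1\sigma_2)^n\sigma_1(\sigma_2^{-1}\sigma_1)^{k}$ when $b_k \geq 2$.

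To finish, I enumerate circles in the two resolutions of $D$. In the all-$B$ resolution the positive crossings become trivial vertical strands while each $\sigma_2^{-1}$ becomes a horizontal smoothing between strands 2 and 3; strand 1 contributes one circle, and the horizontal arcs produce one outer loop plus one bigon between each adjacent pair of cuts. In the all-$A$ resolution the positive crossings are horizontally smoothed while each $\sigma_2^{-1}$ is trivial, and a zigzag trace through the $(\sigma_1\sigma_2)^n\sigma_1$ portion produces exactly two circles while each consecutive pair of $\sigma_1$ smoothings in the alternating factor contributes one further bigon. Summing these counts gives $s_0(D) + s_1(D) \geq c(D) - 2n + 2$, whence $g(\Sigma_D) = (c(D) - s_0(D) - s_1(D) + 2)/2 \leq n$ by the Turaev surface formula. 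For $n < 0$, I plan to pass to the mirror of $L$ and apply the $\sigma_1 \leftrightarrow \sigma_2$ braid automorphism (which fixes $(\sigma_1\sigma_2)^{3|n|}$) to reduce to the $n > 0$ case; mirror invariance of Turaev genus then gives $g_T(L) \leq |n|$. The main difficulty will be carrying out the zigzag circle count in the all-$A$ resolution: one must verify that the trace through $(\sigma_1\sigma_2)^n\sigma_1$ closes up exactly via the braid closure at the two ends of this portion, with no additional merging of circles as the trace passes into the alternating factor.
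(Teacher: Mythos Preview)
Your proposal is correct and follows essentially the same approach as the paper: the lower bound via Proposition~\ref{bound} and Proposition~\ref{wid1} is identical, and for the upper bound the paper likewise rewrites the braid using Lemma~\ref{norm} and collapses via Corollary~\ref{braidreduc} to an explicit diagram whose Turaev surface genus is computed directly (the paper uses the identity $(\sigma_1\sigma_2)^{3n}\sigma_1^{a_1}\cdots\sigma_2^{-b_k}=(\sigma_1\sigma_2)^{3n-1}\sigma_1^{a_1+1}\cdots\sigma_2^{-b_k+1}$ rather than your cyclic permutation, arriving at the closure of $(\sigma_1\sigma_2)^n(\sigma_1\sigma_2^{-1})^{k}$ or $(\sigma_1\sigma_2)^n(\sigma_1\sigma_2^{-1})^{k-1}$). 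Your diagram differs from the paper's only by an extra $\sigma_1$ at the end, which after one more cyclic permutation and application of Corollary~\ref{braidreduc} collapses to the paper's form---using that form would spare you the worry about circles merging across the two factors in your all-$A$ count.
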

\begin{proof}
Suppose $n>0$. We have
$$(\sigma_1\sigma_2)^{3n}\sigma_1^{a_1}\sigma_2^{-b_1}\cdots\sigma_1^{a_k}\sigma_2^{-b_k} =  (\sigma_1\sigma_2)^{3n-1}\sigma_1^{a_1+1}\sigma_2^{-b_1}\cdots\sigma_1^{a_k}\sigma_2^{-b_k+1}.$$
If $b_k>1$, let $D$ be the closure of the braid $(\sigma_1\sigma_2)^n(\sigma_1\sigma_2^{-1})^k$ and if $b_k=1$, let $D$ be the closure of the braid $(\sigma_1\sigma_2)^n(\sigma_1\sigma_2^{-1})^{k-1}$. By applying the normal form of Lemma \ref{norm} to $(\sigma_1\sigma_2)^{3n-1}$ and then using Corollary \ref{braidreduc}, it follows that $g_T(L)\leq g_T(D)$. A straightforward calculation shows that $g_T(D)=n$. Since $w_{Kh}(L)=n+1$ and $w_{Kh}(L)-2\leq g_T(L)$, we have $n-1\leq g_T(L)$. The case where $n<0$ is similar.
\end{proof}
\begin{proposition}
\label{tur2}
Let $L$ be the link type of the closure of $(\sigma_1\sigma_2)^{3n}\sigma_2^m$, where $n\neq 0$. 
\begin{enumerate}
\item If $L$ has no cancellation, then $g_T(L)=|n|$.
\item If $L$ has cancellation and $|n|>1$, then $|n|-1\leq g_T(L)\leq |n|$.
\item If either both $n=1$ and $-3\leq m <0$ or both $n=-1$ and $0<m\leq 3$, then $g_T(L)=0$.
\item If either both $n=1$ and $m<-3$ or both $n=-1$ and $m>3$. Then $g_T(L)=1$.
\end{enumerate}
\end{proposition}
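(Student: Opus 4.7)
The lower bound in each of the four cases follows from Proposition \ref{bound} together with the Khovanov widths $|n|+2,\ |n|+1,\ 2,\ 3$ given by Proposition \ref{wid2}: one reads off $g_T(L)\ge |n|$ in (1), $g_T(L)\ge |n|-1$ in (2), $g_T(L)\ge 0$ in (3), and $g_T(L)\ge 1$ in (4). For the upper bounds the strategy mirrors Proposition \ref{tur1}: build an explicit representative diagram $D'$ of $L$ via Lemma \ref{norm} together with Corollary \ref{braidreduc}, compute $g(\Sigma_{D'})$ from $(2-s_0(D')-s_1(D')+c(D'))/2$, and appeal to $g_T(L)\le g(\Sigma_{D'})$.

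In case (1), mirroring if necessary to assume $n>0,\ m\ge 0$, we rewrite $(\sigma_1\sigma_2)^{3n}\sigma_2^m$ by Lemma \ref{norm} as $\sigma_1^{a_1}\sigma_2^{b_1}\cdots\sigma_1^{a_s}\sigma_2^{b_s}$ with $s=n+1$ and all $a_i,b_i>0$ (the $\sigma_2^m$ is absorbed into the last $\sigma_2$-block), and then invoke Corollary \ref{braidreduc} to reset every exponent to $1$. The resulting $D'$ is the closure of $(\sigma_1\sigma_2)^{n+1}$, for which $c(D')=2(n+1),\ s_0(D')=3,\ s_1(D')=1$, so $g(\Sigma_{D'})=n$.

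Cases (2) and (4) use the same procedure after handling the single sign change. Assuming $m<0$, the normal form of $(\sigma_1\sigma_2)^{3n}$ ends in $\sigma_2$, which cancels freely against the leading $\sigma_2^{-1}$ of $\sigma_2^m$; Corollary \ref{braidreduc} then reduces all same-sign blocks to single generators, producing an explicit small braid whose closure $D'$ can be analyzed directly. In case (2) (with $|n|>1$) the outcome is the closure of $(\sigma_1\sigma_2)^n\sigma_1$ or $(\sigma_1\sigma_2)^n\sigma_1\sigma_2^{-1}$, and a circle count yields $g(\Sigma_{D'})\le n$. For case (4) ($n=\pm 1,\ |m|>3$) the reduced diagram is the closure of $\sigma_1\sigma_2\sigma_1\sigma_2^{-1}$ (or its mirror), for which one checks directly that $s_0=2,\ s_1=2,\ c=4$, giving $g(\Sigma_{D'})=1$.

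Case (3) is the substantive part of the proof: it consists of only the six braids $(\sigma_1\sigma_2)^{\pm 3}\sigma_2^{\mp k}$ for $k\in\{1,2,3\}$, and here the upper bound $g_T(L)=0$ amounts to showing $L$ is alternating. In each case, Lemma \ref{norm} and the braid relations simplify the word so that the closure is manifestly alternating; for example, $(\sigma_1\sigma_2)^3\sigma_2^{-1}$ simplifies to $\sigma_1^2\sigma_2\sigma_1^2$, which Markov-destabilizes to $\sigma_1^4$, giving the alternating torus link $T(2,4)$. The Dasbach--Futer--Kalfagianni--Lin--Stoltzfus characterization of Turaev genus zero then gives $g_T(L)=0$. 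The main obstacle is this case: the upper bounds in (1), (2), and (4) are mechanical consequences of the circle-count formula once Lemma \ref{norm} and Corollary \ref{braidreduc} are applied, but in (3) one must verify by hand, for each of the six exceptional small links, that an alternating diagram exists via a short but essentially ad hoc braid-relation argument.
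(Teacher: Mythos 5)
Your proposal is correct and follows essentially the same route as the paper: establish the lower bound from Proposition~\ref{bound} combined with the widths of Proposition~\ref{wid2}, and establish the upper bound by reducing the braid word via Lemma~\ref{norm} and Corollary~\ref{braidreduc} to a small closed braid whose Turaev surface genus can be counted directly. The only cosmetic difference is in case (3), where you pass to an explicitly alternating closed $2$-braid by a Markov destabilization rather than observing (as the paper does, following Baldwin) that the simplified word has at most five crossings and hence closes to an alternating link.
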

\begin{proof}
{\bf (1).} If $L$ has no cancellation, then either both $n>0$ and $m\geq 0$ or $n<0$ and $m\leq 0$. Corollary \ref{braidreduc} implies that $g_T(L)\leq g_T(T(3,3n))=|n|$. Since $w_{Kh}(L)=|n|+2$, it follows that $g_T(L)=|n|$.

{\bf (2).} Suppose that $L$ has cancellation and $n>1$. Then $m<0$ and
$$(\sigma_1\sigma_2)^{3n}\sigma_2^{m}=(\sigma_1\sigma_2)^{3n-1}\sigma_1\sigma_2^{(m+1)}.$$
If $m<-1$, let $D$ be the closure of $(\sigma_1\sigma_2)^n\sigma_1\sigma_2^{-1}$, and if $m=-1$, let $D$ be the closure $(\sigma_1\sigma_2)^n$. Lemma \ref{norm} and Corollary \ref{braidreduc} imply that $g_T(L)\leq g(\Sigma_D)$. A straightforward calculation shows that $g(\Sigma_D)=n$. Since $w_{Kh}(L)=n+1$, it follows that $n-1\leq g_T(L)$. The case where $n<-1$ and $m>0$ is similar.

{\bf (3).} Suppose $n=1$ and $-3\leq m<0$. As noted in Baldwin's paper \cite{bald}, we have
$$(\sigma_1\sigma_2)^3\sigma_2^m=\sigma_1\sigma_2^2\sigma_1\sigma_2^2\sigma_2^m.$$
By canceling the $\sigma_2^m$ with the final $\sigma_2^2$, one obtains a diagram for $L$ with $5$ crossings or less. Therefore $L$ is alternating and $g_T(L)=0$. The case where $n=-1$ and $0<m\leq 3$ is similar.

{\bf (4).} Suppose $n=1$ and $m<-3$. Then $L$ can be represented by the closure of $\sigma_1\sigma_2^2\sigma_1\sigma_2^{m+2}$. Let $D$ be the closure of the braid $\sigma_1\sigma_2\sigma_1\sigma_2^{-1}$. By Corollary \ref{braidreduc}, we have $g_T(L)\leq g(\Sigma_D)$, and a straightforward calculation shows that $g(\Sigma_D)=1$. Since $w_{Kh}(L)=3$, it follows that $g_T(L)=1$. The case where $n=-1$ and $m>3$ is similar.
\end{proof}
\begin{proposition}
\label{tur3}
Let $L$ be the link type of the closure of $(\sigma_1\sigma_2)^{3n}\sigma_1^m\sigma_2^{-1}$ where $m\in\{-1,-2,-3\}$. If $n>0$, then $g_T(L)=n-1$ and if $n<0$, then $g_T(L)=|n|$.
\end{proposition}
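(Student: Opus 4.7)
The plan is to combine the Khovanov width lower bound from Proposition \ref{bound} with explicit diagrams realizing the matching upper bound, treated case by case on $m\in\{-1,-2,-3\}$.

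For the lower bound, Proposition \ref{wid3} gives $w_{Kh}(L)=n+1$ when $n>0$ and $w_{Kh}(L)=-n+2$ when $n<0$, so Proposition \ref{bound} yields $g_T(L)\geq n-1$ when $n>0$ and $g_T(L)\geq |n|$ when $n<0$. For the upper bound when $m=-1$, I would use centrality of the full twist together with $(\sigma_1\sigma_2)^3=(\sigma_2\sigma_1)^3$ to compute
$$(\sigma_1\sigma_2)^{3n}\sigma_1^{-1}\sigma_2^{-1}=(\sigma_2\sigma_1)^{3n}(\sigma_2\sigma_1)^{-1}=(\sigma_2\sigma_1)^{3n-1},$$
so $L=T(3,3n-1)$ and Proposition \ref{torgen} gives $g_T(L)=\lfloor |3n-1|/3\rfloor$, which equals $n-1$ for $n>0$ and $|n|$ for $n<0$. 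The same style of braid-word manipulation that appears in the proof of Proposition \ref{wid3} for the $m=-3$ subcase identifies $L$ with $T(3,3n-2)$ up to conjugation, so Proposition \ref{torgen} again produces the required values.

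The remaining case $m=-2$ is the main technical step. The proof of Proposition \ref{wid3} shows that for $n>0$ the link $L$ is the closure of $(\sigma_1\sigma_2)^{3n-2}\sigma_1$, and an analogous conjugation argument (or passing to the mirror) handles $n<0$. I would expand $(\sigma_1\sigma_2)^{3n-2}$ into one of the normal forms of Lemma \ref{norm} (choosing the form according to $n \bmod 3$), producing a product of alternating positive blocks $\sigma_1^{a_i}\sigma_2^{b_i}$. Multiplying by the trailing $\sigma_1$ only enlarges the final positive block. Corollary \ref{braidreduc} then lets me reduce each positive power to a single generator without changing the genus of the Turaev surface, producing a short closed braid $D'$ for which I can directly compute $c(D')$, $s_0(D')=3$, and $s_1(D')$ and check that $g(\Sigma_{D'})=n-1$ (resp.\ $|n|$ for $n<0$).

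The main obstacle I expect is the bookkeeping in this $m=-2$ case: selecting the correct branch of Lemma \ref{norm} for each residue of $n$ modulo $3$, keeping track of what happens to the trailing $\sigma_1$ when it merges with the last $\sigma_1$-block, and carefully tracing the all-one resolution of the reduced braid closure to identify $s_1$. Once the resulting Turaev surface genus is pinned down at $n-1$ (or $|n|$), it matches the Khovanov-width lower bound and the proposition follows.
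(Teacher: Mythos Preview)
Your proposal is correct and follows essentially the same route as the paper: combine the lower bound $w_{Kh}(L)-2\leq g_T(L)$ from Proposition~\ref{bound} and Proposition~\ref{wid3} with an upper bound obtained by rewriting the braid word via the identities in the proof of Proposition~\ref{wid3}, applying the normal forms of Lemma~\ref{norm}, and reducing powers by Corollary~\ref{braidreduc}. The paper phrases this more tersely, reducing uniformly to the closure of $(\sigma_1\sigma_2)^n$ for $n>0$ (and the analogous negative-power braid for $n<0$), whereas you split explicitly on $m$ and invoke Proposition~\ref{torgen} directly for $m\in\{-1,-3\}$; the content is the same.

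One simplification: your anticipated obstacle of ``selecting the correct branch of Lemma~\ref{norm} for each residue of $n$ modulo $3$'' does not arise. In the $m=-2$, $n>0$ case you are expanding $(\sigma_1\sigma_2)^{3n-2}$, and $3n-2\equiv 1\pmod 3$ for every $n$, so only the $(\sigma_1\sigma_2)^{3n'+1}$ form of Lemma~\ref{norm} is ever needed (with the explicit small cases $n=1,2$ handled separately). The trailing $\sigma_1$ simply merges into the final $\sigma_1$-block, and after reducing all exponents to $1$ and cyclically absorbing the last $\sigma_1$, you land on the closure of $(\sigma_1\sigma_2)^n$, whose Turaev surface genus is the required $n-1$.
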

\begin{proof}
Let $n>0$. Using the forms in the proof of Proposition \ref{wid3} and the reductions of Lemma \ref{norm} and Corollary \ref{braidreduc}, one sees that $g_T(L)\leq g(\Sigma_D)$ where $D$ is the closure of $(\sigma_1\sigma_2)^{n}$. A straightforward calculation shows that $g(\Sigma_D)=n-1$. Since $w_{Kh}(L)=n+1$, it follows that $g_T(L)=n-1$.

Let $n<0$. Using the forms in the proof of Proposition \ref{wid3} and the reductions of Lemma \ref{norm} and Corollary \ref{braidreduc}, one sees that $g_T(L)\leq g(\Sigma_{D^\prime})$ where $D^\prime$ is the closure of $(\sigma_1\sigma_2)^{n+1}$. A straightforward calculation shows that $g(\Sigma_{D^\prime})=|n|$. Since $w_{Kh}(L)=|n|+2$, it follows that $g_T(L)=|n|$. 
\end{proof}

The previous  results of this section are summarized in the following corollary.
\begin{corollary}
Let $L$ be a closed 3-braid. Then
$$0 \leq g_T(L)-(w_{Kh}(L)-2) \leq 1.$$
\end{corollary}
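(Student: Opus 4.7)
The lower bound $0 \leq g_T(L) - (w_{Kh}(L) - 2)$ is immediate from Proposition \ref{bound}, which is the Manturov / Champanerkar--Kofman--Stoltzfus inequality. So the real content is the upper bound $g_T(L) \leq w_{Kh}(L) - 1$, and my plan is to verify this by case analysis along Murasugi's classification (Theorem \ref{three}), using the explicit values of $w_{Kh}$ from Theorem \ref{3width} together with the upper bounds on $g_T$ supplied by Propositions \ref{tur1}, \ref{tur2}, and \ref{tur3}.

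First, I would dispose of the $n = 0$ case separately: by Theorem \ref{three}, such $L$ is (up to the trivial cases) an alternating link, hence $g_T(L) = 0$ and $w_{Kh}(L) = 2$ by Theorem \ref{qalt} and Corollary \ref{redwid}, so the difference vanishes. For $n \neq 0$, I would then walk through the three families in Theorem \ref{three} in turn. For the family $h^n \sigma_1^{a_1}\sigma_2^{-b_1}\cdots\sigma_1^{a_k}\sigma_2^{-b_k}$, Proposition \ref{wid1} gives $w_{Kh}(L) = |n|+1$ while Proposition \ref{tur1} gives $|n|-1 \leq g_T(L) \leq |n|$; the inequality $g_T(L) \leq w_{Kh}(L) - 1 = |n|$ is immediate.

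Next, for the family $h^n \sigma_2^m$, I would subdivide according to the four subcases of Proposition \ref{tur2}. In the ``no cancellation'' subcase Theorem \ref{3width} gives $w_{Kh}(L) = |n|+2$ and Proposition \ref{tur2}(1) gives $g_T(L) = |n|$, yielding difference $0$. In the ``cancellation with $|n|>1$'' subcase, Theorem \ref{3width} gives $w_{Kh}(L) = |n|+1$ and Proposition \ref{tur2}(2) gives $g_T(L) \leq |n|$, yielding difference at most $1$. The small sporadic subcases $n=\pm 1$ with $-3\leq m<0$ or $0<m\leq 3$ yield $g_T(L)=0 = w_{Kh}(L)-2$, and the subcases $n=\pm1$ with $|m|>3$ yield $g_T(L) = 1 = w_{Kh}(L)-2$. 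Finally, for the family $h^n\sigma_1^m\sigma_2^{-1}$ with $m\in\{-1,-2,-3\}$, I combine Proposition \ref{tur3} with Proposition \ref{wid3}: for $n>0$ we have $g_T(L) = n-1$ and $w_{Kh}(L) = n+1$, while for $n<0$ we have $g_T(L) = |n|$ and $w_{Kh}(L) = |n|+2$; in both situations the difference is $0$.

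I do not expect any serious obstacle: all the heavy lifting has already been done in the preceding propositions, so the ``proof'' is really just a bookkeeping check that in each Murasugi class the quantities $g_T(L)$ and $w_{Kh}(L) - 2$ differ by at most $1$. The one place I would be careful is in recording which cases actually realize the value $1$ (rather than $0$) for the difference, since Proposition \ref{tur1} and Proposition \ref{tur2}(2) only give a range; those are precisely the cases where the Turaev genus is not determined exactly by the methods of the paper, and so the additive error of $1$ in the corollary is sharp with respect to what we have proved, even if the actual value of $g_T(L)$ might turn out to always sit at the bottom or top of the stated range.
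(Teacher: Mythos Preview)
Your proposal is correct and follows essentially the same approach as the paper: the paper presents this corollary as a summary of Propositions \ref{tur1}--\ref{tur3} with no separate proof, and your case-by-case bookkeeping along the Murasugi classification, combined with Proposition \ref{bound} for the lower bound, is exactly what is intended. Your closing remark about which cases realize the additive error of $1$ also anticipates the paper's own remark following the corollary.
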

\begin{remark}
Both the lower bound and upper bound of the above inequality are achieved by closed 3-braids. For example, the links in Proposition \ref{tur3} achieve the lower bound while the links in Proposition \ref{tur2} part (4) achieve the upper bound. There are also closed 3-braids (see Proposition \ref{tur1}) where it is unknown whether the lower bound or upper bound is achieved.
\end{remark}
%-----------------------------------------------------------------------------%

\section{Applications to odd Khovanov homology}
\label{oddsec}

In \cite {odd}, Ozsv\'ath, Rasmussen and Szab\'o introduced odd Khovanov homology, a knot homology that is closely related to Khovanov homology. Odd Khovanov homology, denoted $Kh_{\text{odd}}(L)$, is a bigraded $\bb{Z}$-module whose graded Euler characteristic is the unnormalized Jones polynomial. 

\subsection{A spanning tree model for odd Khovanov homology} Champanerkar and Kofman \cite{span} and independently Wehrli \cite{wehrli} developed a spanning tree model for Khovanov homology. In this subsection, we show that the similarities between Khovanov homology and odd Khovanov homology imply that odd Khovanov homology also has a spanning tree model. 

Let $D$ be a link diagram and let $\mathcal{X}$ be the set of crossings of $D$. Suppose $(C(D),\partial)$ is the hypercube of resolutions complex from \cite{kho} and \cite{barnatan} that generates Khovanov homology, and suppose $(C_{\text{odd}}(D),\partial_{\text{odd}})$ is the hypercube of resolutions complex from \cite{odd} that generates odd Khovanov homology. A vertex in the hypercube is a function $I:\mathcal{X}\to\{0,1\}$. For each vertex $I$, one obtains a one-manifold $D_I$ be smoothing each crossing of $D$ according to $I$. Both chain complexes are constructed by associating certain $\bb{Z}$-modules to each of the one-manifolds $D_I$.

Number the crossings of $D$ from $1$ to $|\mathcal{X}|$ arbitrarily. One can obtain the vertices of the hypercube as the leaves of a binary tree. The root of this tree is the diagram $D$. The children of a vertex $v$ at level $i$ are obtained by smoothing the $i$th crossing of $v$ into either a $0$-resolution or a $1$-resolution. See Figure \ref{trefskein}.
\begin{figure}[h]
\includegraphics[scale=.2]{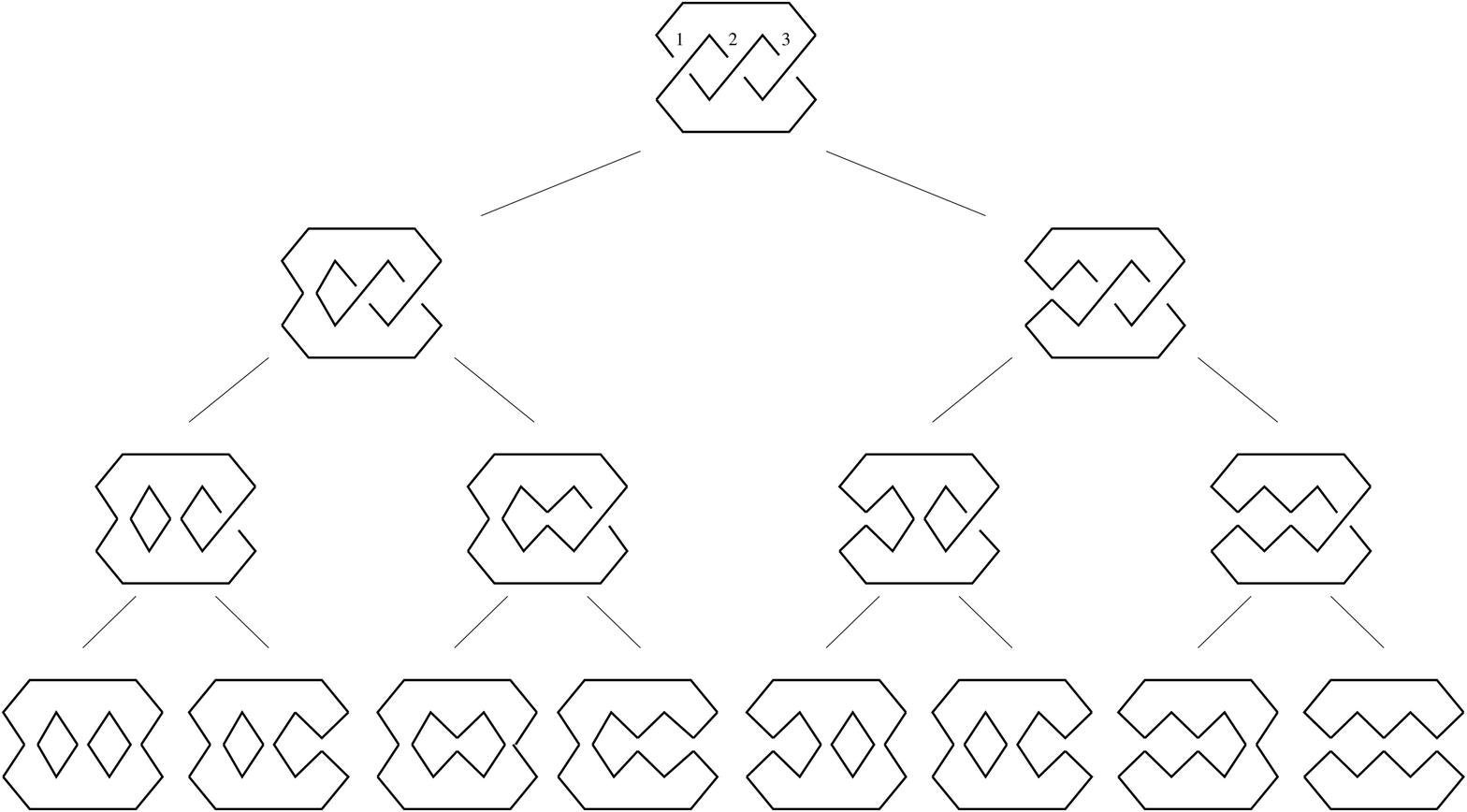}
\caption{Binary tree whose leaves are the vertices of the hypercube.}
\label{trefskein}
\end{figure}

Modify the binary tree as follows. If either of the children of a vertex $v$ is disconnected, then the vertex $v$ becomes a leaf and all its descendants are deleted. See Figure \ref{treftrees}. The leaves of the modified binary trees are {\it twisted unknots}, i.e. they are unknots that can be trivialized using only Reidemeister I moves. Also, the leaves are in one-to-one correspondence with the spanning trees of either checkerboard graph associated to $D$. The details of this correspondence are described in Champarnerkar-Kofman \cite{span} and Wehrli \cite{wehrli}. Denote the set of spanning trees by $\mathcal{T}(D)$, and the diagram associated to a tree $T\in\mathcal{T}(D)$ by $D_T$.
\begin{figure}[h]
\includegraphics[scale=.2]{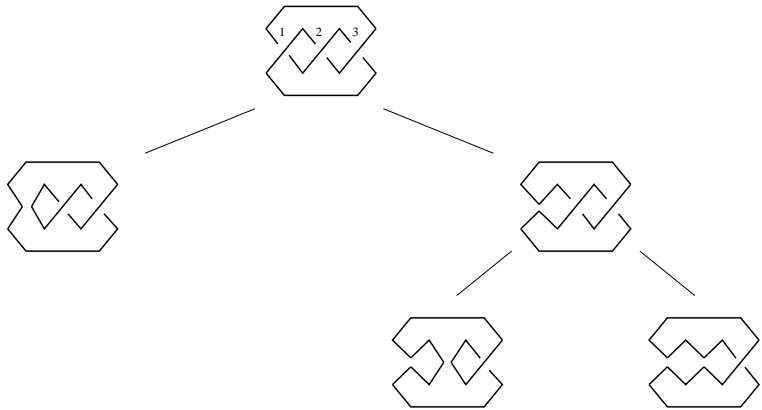}
\caption{The modified binary tree whose leaves are in correspondence to spanning trees of the checkerboard graph of $D$.}
\label{treftrees}
\end{figure}

Let $U$ denote diagram of the unknot with no crossings. The Khovanov complex of the disjoint union of $k$ copies of $U$ is given by
$$C(U^k) = \mathcal{A}^{\otimes k},$$
where $\mathcal{A}$ is the bigraded module defined by $\mathcal{A}^{0,-1}=\mathcal{A}^{0,1}=\bb{Z}$ and $\mathcal{A}^{i,j}=0$ for $(i,j)\neq(0,\pm 1)$. For any bigraded object $M$, define grading shifts $[m]$ and $\{n\}$ by $(M[n]\{n\})^{i,j} := M^{i-m.j-n}$.

In \cite{wehrli}, Wehrli gives the following {\it spanning tree model} for Khovanov homology. Champanerkar and Kofman prove an analogous result in \cite{span}.
\begin{proposition}[Wehrli]
\label{wehrspan}
Let $D$ be a connected link diagram. Then there is a decomposition $C(D)=A\oplus B$, where $B$ is contractible and $A$ as a module is given by
$$A = \bigoplus_{T\in\mathcal{T}(D)}\mathcal{A}[f(D,D_T)]\{g(D,D_T)\},$$
for functions $f$ and $g$ depending on $D$ and $D_T$.
\end{proposition}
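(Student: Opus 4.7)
The plan is to follow Wehrli's iterative cancellation argument, which successively simplifies the hypercube of resolutions by Gaussian elimination until only one summand per spanning tree survives. First I would fix an ordering of the crossings of $D$ and form the full binary tree of partial resolutions, so that $C(D)$ is recovered by iterated mapping cones of the shape $C(D) \simeq \operatorname{Cone}(C(D_0) \to C(D_1))$ at the root and similarly at each internal vertex. Then at each vertex $v$ with partial-resolution diagram $D_v$ I would inspect the next crossing $c$: if one of its two smoothings disconnects $D_v$, a trivial circle splits off on that side. Using Bar--Natan's delooping together with chain-level Reidemeister I invariance \cite{barnatan}, the mapping cone at $v$ acquires an acyclic direct summand, and an explicit Gaussian cancellation collapses the offending branch. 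Applying this recursively at every eligible vertex yields the modified binary tree of the paper, whose surviving leaves correspond bijectively to the spanning trees of a checkerboard graph of $D$, exactly as in \cite{span, wehrli}.

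Once the pruning is complete, $C(D)$ splits as $A \oplus B$ with $B$ contractible (the accumulated pruned branches) and
$$A = \bigoplus_{T \in \mathcal{T}(D)} C(D_T),$$
with grading shifts determined by the sequence of smoothings along the branch to $T$. Because each $D_T$ is a twisted unknot, iterated Reidemeister I invariance gives $C(D_T) \simeq \mathcal{A}[f(D,D_T)]\{g(D,D_T)\}$, where $f$ and $g$ combine the writhe-type shifts needed to untwist $D_T$ with the shifts accumulated from the mapping cones; substituting gives the stated decomposition.

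The main obstacle is the pruning step: one must verify that at each stage a genuinely contractible direct summand can be split off with the correct grading shifts. This is delicate because successive cancellations interact through the hypercube differential, but each individual pruning reduces to a single application of Bar--Natan's cancellation, where the merge or comultiplication differential contains an identity block forced by the presence of a disconnecting circle. The remaining bookkeeping --- identifying the cumulative shifts $f$ and $g$ and confirming the spanning-tree bijection through the cycle/cut structure of the checkerboard graph --- is combinatorial and proceeds as in \cite{span, wehrli}.
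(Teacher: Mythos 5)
The paper does not reprove this proposition; it cites Wehrli and Champanerkar--Kofman and merely enumerates the three structural ingredients used (the bigraded module structure of $C(D)$, the iterated mapping-cone description at each crossing, and chain-level Reidemeister I invariance). Your sketch reconstructs Wehrli's argument using exactly those three ingredients, so it is consistent with the approach the paper relies on.
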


Let $D$, $D_0$ and $D_1$ be as in Figure \ref{unresol}. The spanning tree model for Khovanov homology is a consequence of
\begin{enumerate}
\item the bigraded $\bb{Z}$-module structure of $C(D)$,
\item the fact that $C(D)$ is isomorphic to the mapping cone of $w: C(D_0)\to C(D_1)$, for some map $w$, and
\item the structure of the complex under Reidemeister I moves, which is specified by
\begin{equation*}  
C(\includegraphics[scale=0.08]{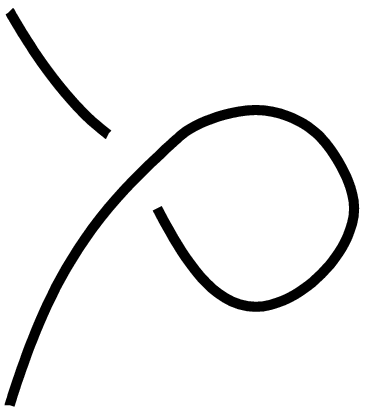}) \cong
C(\includegraphics[scale=0.07]{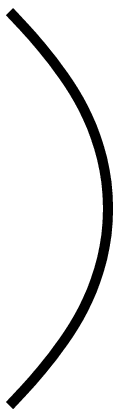})\{-1\} \oplus B_1,\qquad
C(\includegraphics[scale=0.08]{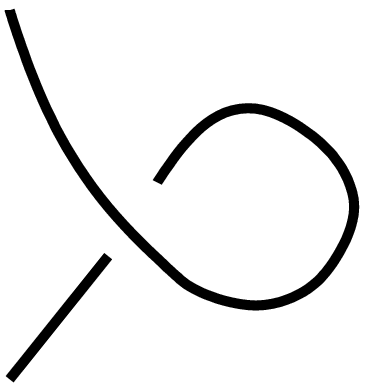}) \cong 
C(\includegraphics[scale=0.07]{notwist.eps})[1]\{2\} \oplus B_2 ,
\end{equation*}
for contractible complexes $B_1$ and $B_2$. 
\end{enumerate}

As bigraded $\bb{Z}$-modules $C(D)$ and $C_{\text{odd}}(D)$ are isomorphic. Furthermore, from the proof of invariance under Reidemeister I moves in \cite{odd}, one can see that (2) and (3) also hold for odd Khovanov homology. Therefore odd Khovanov homology also has a spanning tree model.
\begin{proposition}
\label{oddspan}
Let $D$ be a connected link diagram. Then there is a decomposition $C_{\text{odd}}(D)=A\oplus B$, where $B$ is contractible and $A$ as a module is given by
$$A = \bigoplus_{T\in\mathcal{T}(D)}\mathcal{A}[f(D,D_T)]\{g(D,D_T)\},$$
for functions $f$ and $g$, which are the same as in Proposition \ref{wehrspan}.
\end{proposition}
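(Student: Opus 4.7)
The plan is to mimic Wehrli's proof of Proposition~\ref{wehrspan} almost verbatim, since the text has already reduced the statement to three formal ingredients: (1) an underlying bigraded $\bb{Z}$-module structure, (2) a mapping cone description at each crossing, and (3) the Reidemeister~I splitting. The excerpt explicitly observes that $C(D)$ and $C_{\text{odd}}(D)$ agree as bigraded modules, and that (2) and (3) hold for odd Khovanov homology by the construction and invariance proofs of Ozsv\'ath--Rasmussen--Szab\'o \cite{odd}. So the proof amounts to running Wehrli's inductive argument with these three properties as black boxes.

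First I would set up induction on the number of crossings $c(D)$. The base case $c(D)=0$ is trivial: since $D$ is connected and crossingless it is a single crossingless unknot $U$, the set $\mathcal{T}(D)$ consists of one point, and $C_{\text{odd}}(U)=\mathcal{A}$ with both shifts equal to zero, matching the statement. For the inductive step, select a crossing $x$ and, using ingredient~(2), write
\[
C_{\text{odd}}(D)\;\cong\;\mathrm{Cone}\bigl(w\colon C_{\text{odd}}(D_0)\lto C_{\text{odd}}(D_1)\bigr),
\]
where $D_0,D_1$ are the two smoothings at $x$. When both $D_0$ and $D_1$ are connected, induction gives spanning tree decompositions on each side, and the standard binary-tree-to-spanning-tree bijection described in \cite{span, wehrli} assembles these into the desired decomposition for $D$; the shift functions $f(D,D_T)$ and $g(D,D_T)$ accumulate in exactly the same way as in the even theory because (2) provides the same cone differential on bigraded modules.

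When one of the smoothings, say $D_i$, is disconnected, the binary tree prunes at that branch and the remaining diagram is a twisted unknot. Applying ingredient~(3) recursively, each positive or negative kink strips off a contractible summand and shifts the surviving factor by $\{-1\}$ or $[1]\{2\}$ respectively, until only $\mathcal{A}[f]\{g\}$ remains. Because these shift conventions coincide with those of ordinary Khovanov homology, the functions $f,g$ obtained here are identical to those appearing in Proposition~\ref{wehrspan}. Summing the contractible summands produced at each stage of the recursion yields the complex $B$.

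The main obstacle, such as it is, lies in ingredient~(3): one must confirm that the Reidemeister~I invariance proof in \cite{odd} really produces a strong deformation retract onto $C_{\text{odd}}(\includegraphics[scale=0.07]{notwist.eps})\{-1\}$ (respectively $C_{\text{odd}}(\includegraphics[scale=0.07]{notwist.eps})[1]\{2\}$), not merely a chain homotopy equivalence, so that the contractible complement $B_1$ (respectively $B_2$) splits off as a direct summand on the nose. This is essentially the content of the Reidemeister~I chain-level argument in \cite{odd}, so the verification is bookkeeping rather than new mathematics. Once this is confirmed, the induction closes and the functions $f,g$ produced by the recursion agree with the ones in Proposition~\ref{wehrspan} by construction.
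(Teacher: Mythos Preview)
Your proposal is correct and matches the paper's approach exactly: the paper gives no separate proof, but simply observes in the paragraph preceding the proposition that ingredients~(1)--(3) hold for $C_{\text{odd}}(D)$ and hence Wehrli's argument carries over verbatim. You have supplied more detail than the paper does, including the explicit induction scheme and the remark about needing a genuine direct-sum splitting in the Reidemeister~I step, but the underlying strategy is identical.
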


Proposition \ref{bound} is a consequence of the bigraded $\bb{Z}$-module structure of the spanning tree complex for Khovanov homology. Since odd Khovanov homology has a spanning tree complex with the same bigraded $\bb{Z}$-module structure, there is an analogous Turaev genus bound on the odd Khovanov width of a link $L$, denoted $w_{Kh_{\text{odd}}}(L)$.
\begin{proposition}
\label{oddbound}
Let $L$ be a link. Then 
$$w_{Kh_{\text{odd}}}(L)-2\leq g_T(L).$$
\end{proposition}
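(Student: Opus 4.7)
The plan is to transport the proof of Proposition \ref{bound} essentially verbatim, using Proposition \ref{oddspan} in place of Proposition \ref{wehrspan}. The original Turaev genus bound of Manturov and Champanerkar--Kofman--Stoltzfus is extracted from the bigraded $\bb{Z}$-module structure of the spanning tree complex for Khovanov homology: one fixes a diagram $D$ realizing $g_T(L)$, observes that the rank of $Kh(L)$ is bounded by the rank of a spanning tree chain complex whose generators $\mathcal{A}[f(D,D_T)]\{g(D,D_T)\}$ occupy $\delta=j-2i$ gradings lying in an interval whose length is controlled by $g(\Sigma_D)$, and concludes that $w_{Kh}(L)-2\le g_T(L)$. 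Since the homology is a subquotient of the chain complex in a way that preserves the $\delta$-grading, the same bound passes to $w_{Kh}(L)$.

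First I would take a connected diagram $D$ of $L$ with $g(\Sigma_D)=g_T(L)$, and apply Proposition \ref{oddspan} to write $C_{\text{odd}}(D) = A \oplus B$ with $B$ contractible and
\[
A = \bigoplus_{T\in\mathcal{T}(D)} \mathcal{A}[f(D,D_T)]\{g(D,D_T)\}.
\]
Because $B$ is contractible, $Kh_{\text{odd}}(L)$ is the homology of $A$, and the $\delta$-gradings of the generators of $Kh_{\text{odd}}(L)$ are contained in the set of $\delta$-gradings appearing in $A$. Proposition \ref{oddspan} tells us that the shift functions $f$ and $g$ are \emph{exactly} those of Proposition \ref{wehrspan}, so the set of $\delta$-gradings of generators of $A$ is identical to the set that controls $w_{Kh}(L)$ in the original argument.

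Next I would quote (or recall) the key combinatorial estimate from Champanerkar--Kofman--Stoltzfus: the $\delta$-gradings of $\mathcal{A}[f(D,D_T)]\{g(D,D_T)\}$, as $T$ ranges over $\mathcal{T}(D)$, lie in an interval of length at most $2(g(\Sigma_D)+1)$. This immediately yields
\[
w_{Kh_{\text{odd}}}(L) = \tfrac{1}{2}(\delta_{\max}-\delta_{\min})+1 \le g(\Sigma_D)+2 = g_T(L)+2,
\]
which rearranges to the desired inequality.

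The main obstacle is purely one of citation rather than calculation: one must check that the proof of the Khovanov width bound really does factor through the spanning tree chain complex as a bigraded $\bb{Z}$-module, using no additional structure (such as the specific Frobenius algebra used to define the Khovanov differential). Once this is observed, the agreement of the shift functions in Propositions \ref{wehrspan} and \ref{oddspan} makes the proof formal; there is nothing further to verify on the odd side, since the differential on $A$ does not enter into the width estimate.
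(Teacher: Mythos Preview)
Your proposal is correct and follows essentially the same approach as the paper. The paper does not give a separate proof of Proposition~\ref{oddbound}; it simply remarks that Proposition~\ref{bound} is a consequence of the bigraded $\bb{Z}$-module structure of the spanning tree complex, and since Proposition~\ref{oddspan} shows the odd spanning tree complex has the identical bigraded $\bb{Z}$-module structure (with the same shift functions $f,g$), the bound transfers immediately---which is exactly the argument you have spelled out in more detail.
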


\subsection{The odd Khovanov width of closed 3-braids}

There is a close relationship between Khovanov homology and odd Khovanov homology. Ozsv\'ath, Rasmussen, and Szab\'o \cite{odd} have shown that
$$Kh(L;\bb{Z}_2) \cong Kh_{\text{odd}}(L;\bb{Z}_2),$$
and that odd Khovanov homology satisfies long exact sequences identical to the sequences in Theorem \ref{les}.  These similarities, along with the Turaev genus bound given in Proposition \ref{oddbound}, imply the following result.
\begin{theorem}
\label{oddband}
Let $L$ be a closed 3-braid. Then $Kh(L)$ is $[\delta_{\text{min}},\delta_{\text{max}}]$-thick if and only if $Kh_{\text{odd}}(L)$ is $[\delta_{\text{min}},\delta_{\text{max}}]$-thick.
\end{theorem}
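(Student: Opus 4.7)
The plan is to rerun the inductive arguments of Section~\ref{3-braid sec} that computed $w_{Kh}(L)$ for every closed 3-braid, but with odd Khovanov homology in place of Khovanov homology. Three ingredients collected in the paragraph just before the statement make this feasible: (i) odd Khovanov satisfies long exact sequences identical to those of Theorem~\ref{les}; (ii) the Turaev genus bound $w_{Kh_{\text{odd}}}(L) - 2 \leq g_T(L)$ of Proposition~\ref{oddbound}; and (iii) the $\mathbb{Z}_2$-isomorphism $Kh(L;\mathbb{Z}_2) \cong Kh_{\text{odd}}(L;\mathbb{Z}_2)$.

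First I would establish the base cases that fuel the Section~\ref{3-braid sec} inductions: odd Khovanov homology of the unknot, two-component unlinks, torus knots $T(3,q)$ (in particular $T(3,3n-1)$, $T(3,5)$, and $T(3,6)$), and the link $L(6,n,1)$. On unknots and unlinks $Kh_{\text{odd}}$ coincides with $Kh$, so nothing is needed. For the torus knot and $L(6,n,1)$ base cases I would apply (iii) to transfer the integral $\delta$-supports recorded in Corollary~\ref{zcoeff} and in the proofs of Propositions~\ref{wid2} and \ref{wid3} to the $\mathbb{Z}_2$-support of $Kh_{\text{odd}}$, and then combine the Turaev genus upper bound (ii) with the Turaev genus computations of Section~\ref{3-braid sec} to pin down the integral $\delta$-support of $Kh_{\text{odd}}$ on the nose.

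With the base cases in hand, the inductive arguments of Propositions~\ref{wid1}, \ref{wid2}, and \ref{wid3}, together with their feeder results Corollary~\ref{glue} and Proposition~\ref{band}, transfer verbatim to $Kh_{\text{odd}}$: each argument depends solely on the LES formalism of Theorem~\ref{les}, so by (i) all the $\delta$-grading bookkeeping produces exactly the same $\delta_{\text{min}}$ and $\delta_{\text{max}}$ as for $Kh(L)$. Input from Theorem~\ref{qalt} transfers analogously, since its proof by Manolescu--Ozsv\'ath uses only the reduced LES of Theorem~\ref{les}; combined with (iii) and Proposition~\ref{oddbound}, this ensures that the reduced odd Khovanov homology of a quasi-alternating link is also supported in a single $\delta$-grading, so the quasi-alternating step of the proof of Proposition~\ref{wid2}(5) survives. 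The conclusion is that the $[\delta_{\text{min}}, \delta_{\text{max}}]$-thickness of $Kh(L)$ and of $Kh_{\text{odd}}(L)$ are governed by identical formulas, proving the iff.

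The main obstacle is the base-case bookkeeping. Since the KnotInfo polynomials used in Section~\ref{3-braid sec} record only Khovanov data, one must either chase the 2-torsion of each relevant $T(3,q)$ explicitly or invoke a structural result preventing 2-torsion from sitting on the boundary of the integral $\delta$-band (so that the $\mathbb{Z}_2$-support of $Kh$, hence of $Kh_{\text{odd}}$, does not widen the integral $\delta$-support). Once this finite verification is in place, the remainder of the argument is a direct transcription of Section~\ref{3-braid sec}.
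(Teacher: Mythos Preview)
Your proposal is essentially the paper's proof: verify the base cases via the $\mathbb{Z}_2$-isomorphism together with the Turaev genus bound, then rerun the Section~\ref{3-braid sec} inductions using the odd long exact sequences. The obstacle you flag in your last paragraph dissolves once you observe (as the paper does) that for every base case $L'$---the $(3,q)$ torus links and $L(6,n,1)$---the \emph{rational} Khovanov width already saturates the Turaev genus bound, $w_{Kh}(L';\mathbb{Q})=g_T(L')+2$; hence $Kh^\delta(L';\mathbb{Q})\neq 0$ at the extremal $\delta$'s, which forces $Kh^\delta(L';\mathbb{Z}_2)\neq 0$, then $Kh_{\text{odd}}^\delta(L';\mathbb{Z}_2)\neq 0$, then $Kh_{\text{odd}}^\delta(L')\neq 0$---no $2$-torsion bookkeeping is required.
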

\begin{proof}
Let $L'$ be a link that is the base case for one of the inductions in Propositions \ref{case1lem} through \ref{wid3}.
Then $L'$ is either a $(3,q)$ torus link or the link $L(6,n,1)$, and 
\begin{equation}
\label{basewidth}
w_{Kh}(L';\bb{Q}) = w_{Kh_{\text{odd}}}(L') = g_T(L') +2.
\end{equation}

Suppose $Kh^\delta(L';\bb{Q})$ is nontrivial. Then $Kh^\delta(L';\bb{Z}_2)$ is also nontrvial. Since $Kh(L;\bb{Z}_2) \cong Kh_{\text{odd}}(L;\bb{Z}_2)$, it follows that $Kh_{\text{odd}}^\delta(L';\bb{Z}_2)$ is nontrvial. Therefore, $Kh_{\text{odd}}^\delta(L')$ is nontrivial. Then Equation \ref{basewidth} implies 
$$g_T(L')+2 = w_{Kh}(L';\bb{Q}) \leq w_{Kh_{\text{odd}}}(L') \leq g_T(L')+2.$$
Thus $Kh(L')$ is $[\delta_{\text{min}}',\delta_{\text{max}}']$-thick if and only if $Kh'(L')$ is $[\delta_{\text{min}}',\delta_{\text{max}}']$-thick. 

The proofs of Propositions \ref{case1lem} through \ref{wid3} rely only on the Khovanov homology of the base case and the long exact sequences of Theorem \ref{les}. Therefore, Propositions \ref{case1lem} through \ref{wid3} hold for odd Khovanov homology, and this implies the result.
\end{proof}

\begin{corollary}
\label{oddwidth}
Let $L$ be a closed 3-braid. Then
$$w_{Kh}(L) = w_{Kh_{\text{odd}}}(L).$$
\end{corollary}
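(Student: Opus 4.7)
The plan is to obtain this corollary as an immediate consequence of Theorem \ref{oddband}. Recall that $w_{Kh}(L)$ is defined by the formula $\tfrac{1}{2}(\delta_{\text{max}}-\delta_{\text{min}})+1$ applied to the extremes of the $\delta$-grading support of $Kh(L)$, and $w_{Kh_{\text{odd}}}(L)$ is defined by exactly the same formula applied to the support of $Kh_{\text{odd}}(L)$. Since Theorem \ref{oddband} already asserts that for a closed 3-braid $L$ the support interval $[\delta_{\text{min}},\delta_{\text{max}}]$ is identical for both homology theories, equality of the widths is automatic once one unpacks the definitions.

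In other words, there is no substantive combinatorics left to do for this corollary itself; the real work is entirely in Theorem \ref{oddband}. The main obstacle, which I regard as resolved upstream, is verifying the identification of supports for the base-case links used throughout Section \ref{3-braid sec} (the torus links $T(3,q)$ and the link $L(6,n,1)$), which is handled in the proof of Theorem \ref{oddband} by sandwiching the odd width between $w_{Kh}(L';\QQ)$, forced below by the mod~$2$ identification $Kh(L';\ZZ_2)\cong Kh_{\text{odd}}(L';\ZZ_2)$, and the Turaev genus bound from Proposition \ref{oddbound}, and then transporting the inductive steps of Propositions \ref{case1lem}--\ref{wid3} across the long exact sequences, which are formally identical in both theories. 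Given that backbone, the one-line argument above completes the proof.
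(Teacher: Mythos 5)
Your proof is correct and matches the paper exactly: the paper states Corollary \ref{oddwidth} with no separate argument precisely because, as you observe, it is an immediate consequence of Theorem \ref{oddband} together with the fact that width is computed by the same formula from the support interval in both theories.
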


Note that Corollary \ref{oddwidth} is not true for the closure of $n$-braids where $n>3$. The examples from Remark \ref{shum} have $w_{Kh_{\text{odd}}}(L)>2$. These examples can be used to generate infinite families of examples of closed $n$-braids where odd Khovanov width and Khovanov width are different for $n>3$.


\begin{thebibliography}{99}

\bibitem{abe} T. Abe and K. Kishimoto. {\it The dealternating number and the alternation number of a closed $3$-braid}. arXiv:math.GT/0808.0573.

\bibitem{ap} M. Asaeda and J. Prztycki. {\it Khovanov homology: torsion and thickness}, Advances in topological quantum field theory, Proc. of the NATO Advanced Research Workshop on new techniques in topological quantum field theory. NATA Sci. Ser. II. Math. Phys. Chem. {\bf 179} (2004) 135-166. arXiv:math.GT/0402402.

\bibitem{birman} J. Birman and W. Menasco. {\it Studying links via closed braids III: classifying links which are closed 3-braids.} Pacific Journal of Mathematics. {\bf 161} (1993) 25-115.

\bibitem{bald} J. Baldwin. {\it Heegaard Floer homology and genus one, one boundary component open books}. Journal of Topology. {\bf 1} No. 4 (2008) 962-992. arXiv:math.GT/0804.3624.

\bibitem{barnatan} D. Bar Natan. {\it On Khovanov's categorification of the Jones polynomial.} Algebraic and Geometric Topology {\bf 2} (2002) 337-370. arXiv:math.QA/0201043.

\bibitem{knotinfo} J. C. Cha and C. Livingston. {\it KnotInfo: Table of Knot Invariants}, http://www.indiana.edu/~knotinfo.

\bibitem{span} A. Champanerkar and I. Kofman. {\it Spanning trees and Khovanov homology.} to appear in Proceedings of the American Mathematical Society. arXiv:math.GT/0607510.

\bibitem{cha-kof} A. Champanerkar and I. Kofman. {\it Twisting quasi-alternating links}. arXiv:math.GT/0712.2590.

\bibitem{stoltz} A. Champanerkar, I. Kofman, and N. Stoltzfus. {\it Graphs on Surfaces and Khovanov homology}. Algebraic and Geometric Topology. {\bf 7} (2007) 1531-1540. arXiv:math/0705.3453.

\bibitem{das} O. T. Dasbach, D. Futer, E. Kalfagianni, X. Lin, and N. Stoltzfus. {\it The Jones polynomial and graphs on surfaces}. Journal of Combinatorial Theory, Series B. {\bf 98} Issue 2 (2008) 384-399. arXiv:math.GT/0605571v3.

\bibitem{gar} F. Garside. {\it The braid groups and other groups}. Quart. J. Math. Oxford, {\bf 20} No. 78 (1969), 235-254.

\bibitem{gor} C. Gordon and R  Litherland. {\it On the signature of a link}. Invent. Math. 47 (1978) 53Ð69.

\bibitem{kho} M. Khovanov. {\it A categorification of the Jones polynomial}. Duke Math J. {\bf 101} No. 3 (2000) 359-426. arXiv:math.QA/9908171. 

\bibitem{kho2} M. Khovanov. {\it Patterns in knot cohomology I}. Experiment. Math. {\bf 12} No. 3 (2003), 365-74. arXiv:math.QA/0201306.

\bibitem{lee} E. S. Lee. {\it The support of Khovanov's invariants for alternating knots}. arXiv:math.GT/0201105.

\bibitem{mo} C. Manolescu and P. Ozsv\'{a}th. {\it On the Khovanov and knot Floer homologies of quasi-alternating links}. Proceedings of the 14th G\"okova Geometry-Topology Conference. (2007) 60-81. arXiv:math.0708.3249v2.

\bibitem{man} V. Manturov. {\it Minimal diagrams of classical and virtual links}. arXiv:math.GT/0501393.

\bibitem{mur} K. Murasugi. {\it On closed 3-braids}. Number 151 in Memoirs of the American Mathematical Society. American 
Mathematical Society, 1974. 

\bibitem{odd} P. Ozsv\'ath, J. Rasmussen, and S. Szab\'o. {\it Odd Khovanov homology.} arXiv:math.GT/0710.4300.

\bibitem{ras} J. Rasmussen. {\it Knot polynomials and knot homologies}. Geometry and topology of manifolds. Fields Inst. Commun. {\bf 47} 261-280. arXiv:math.GT/0504045.

\bibitem{sch} O. Schreier. {\it \"Uber die Gruppen $A^aB^b = 1$}. Abh. Math. Sem. Univ. Hamburg, {\bf 3} (1924), 167-169.

\bibitem{stos} M. Sto\v{s}i\'{c}. {\it Khovanov homology of torus links}. Journal of Topology and its Applications. (2008). arXiv:math/0606.5656.

\bibitem{tur} V. G. Turaev. {\it A simple proof of the Murasugi and Kauffman theorems on alternating links}. Enseign. Math. (2), 33(3-4):203-225, 1987. 869-884.

\bibitem{turn} P. Turner. {A spectral sequence for Khovanov homology with an application to $(3,q)$-torus links}. Algebraic and Geometric Topology. {\bf 8}, (2008), 869-884. arXiv:math.GT/0606369.

\bibitem{vir} O. Viro. {\it Remarks on the definition of the Khovanov homology}. arXiv:math.GT/0202199.

\bibitem{wat} L. Watson. {\it Surgery obstructions from Khovanov homology}. arXiv:math.GT/0807.1341.

\bibitem{wehrli} S. Wehrli. {\it A spanning tree model for Khovanov homology}. arXiv:math.GT/0409328.

\end{thebibliography}
\end{document}